\let\oldbibliography\thebibliography  
\renewcommand{\thebibliography}[1]{%
  \oldbibliography{#1}%
   \setlength{\itemsep}{1pt}
}
\theoremstyle{plain}
\newtheorem{satz}{Theorem}[section]
\newtheorem{proposition}[satz]{Proposition}
\newtheorem{lemma}[satz]{Lemma}
\newtheorem{kor}[satz]{Corollary}
\newtheorem{bem}[satz]{Remark}
\newtheorem{notat}[satz]{Notation}
\renewcommand{\epsilon}{\varepsilon}
\newcommand{\M}{\mathrm{M}}
\def\3{\ss}
\def\l{\lbrace }
\def\r{\rbrace}
\def\t{\tilde}
\def\bl{\biggl}
\def\br{\biggr}
\def \en{\mathbb{N}}
\def \P{\mathbb{P}}
\def \E{\mathbb{E}}
\def \bs\setminus {\mathbb{P}}
\def \bs{I \!\! B}
\newcommand{\bea}{\begin{eqnarray*}}
\newcommand{\eea}{\end{eqnarray*}}
\newcommand{\be}{\begin{eqnarray}}
\newcommand{\ee}{\end{eqnarray}}
\newcommand{\Var}{ \mbox{Var} }
\newcommand{\Cov}{ \mbox{Cov} }
\newcommand*{\pkt}{\makebox[1.5ex]{\textbf{$\mathbin{\scalebox{1.2}{\ensuremath{
\cdot}}}$}}}
\newcommand*{\1}{\makebox[1.5ex]{\textbf{$\mathbin{\scalebox{1.1}{\ensuremath{
\mathbbm{1}}}}$}}}
\def \be{\mathbb{B}}
\def\bl{\biggl(}
\def\br{\biggr)}
\def\mcl{\mathcal{L}}
\def\mck{\mathcal{K}}
\newenvironment{bew}{\noindent $Proof$:}{\hspace*{\fill}
$\square$\\}
\newenvironment{bew1}{\smallskip $Proof$:}{}
\newcommand{\mvert}{\,|\,}
\begin{document}


\title{Convergence Rates for the Degree Distribution in a Dynamic Network Model}

\author{\renewcommand{\thefootnote}{\arabic{footnote}} Fabian K\"uck\footnotemark[1]$^{\hspace*{3.7pt},\hspace*{0.5pt}}$\footnotemark[2] \ and Dominic Schuhmacher\footnotemark[1]$^{\hspace*{4pt},}$\footnotemark[3] \\[2mm] University of G\"ottingen}

\footnotetext[1]{Institute for Mathematical Stochastics, University of G\"ottingen,
Goldschmidtstra{\ss}e 7, 37077 G\"ottingen, Germany.} 
\footnotetext[2]{E-mail: fabian.kueck@mathematik.uni-goettingen.de}
\footnotetext[3]{E-mail: schuhmacher@math.uni-goettingen.de}
\maketitle

%
%
%
%


\begin{abstract}
In the stochastic network model of Britton
and Lindholm \cite{b10}, the number of individuals 
evolves according to a supercritical linear birth and death process, and a random social 
index is assigned to each individual at birth, which controls the rate at 
which connections to other individuals are created. We derive a rate for the 
convergence of the degree distribution in this model towards the mixed Poisson distribution determined by Britton and Lindholm based on heuristic arguments. In order to do so, we 
deduce the degree distribution at finite time and derive an approximation result 
for mixed Poisson distributions to com\-pute an upper bound
for the total variation distance to the
asymptotic degree distribution. 
\vspace*{3.5mm}

\noindent
\textit{Keywords: mixed Poisson distribution, dynamic random graph, small worlds.}
\end{abstract}

\section{Introduction}\label{sec1}
``Network Science'' is a relatively young, rapidly growing research area 
dealing with complex systems with an underlying graph structure (see e.g.\ 
the recent book by van der Hofstad~\cite{hof15}). One of the first random graph 
models is the well-known Erd\H{o}s-R\'enyi model, which is a static model, 
i.e.\ it describes a random network at a fixed time. Although this model shows 
interesting behaviour, further random graph models were needed for the 
description of real networks since some empirical networks have important 
properties that are not represented by Erd\H{o}s-R\'enyi Graphs. One of these 
is the power law property of the degree distribution. Preferential 
attachment models exhibit such a power law behaviour asymptotically and were 
popularized by Barab\'asi and Albert \cite{bara}. Pek\"oz, 
R\"ollin and Ross \cite{pek} established convergence rates for the degree 
distribution in 
a discrete-time preferential attachment model in terms of the total variation 
distance. Besides preferential models, the so-called fitness models have gained huge popularity in the recent years. In those models, the attachment does not only depend on the degree but also on a random intrinsic fitness that is determined at the birth of each node. The best-known fitness model was introduced in \cite{bb01}. In this model the attachment mechanism is a combination with preferential attachment. Pure fitness models were for example considered in \cite{cal02} and \cite{smo13}. The 
time-continuous random graph model that was introduced by Britton and 
Lindholm \cite{b10} and that we investigate here can be seen as pure fitness model that is particular realistic due to time continuity and possibly dying nodes as well as edges. Depending 
on the application, either preferential models or the model by Britton and 
Lindholm can be more realistic. Note that it can be 
shown that the degrees of nodes in the model by Britton and Lindholm can in 
particular exhibit power laws such that this model displays an interesting 
alternative mechanism for producing graphs with this property.
\\

Let us give a \emph{loop-free} version of the definition of the original 
\emph{dynamic network model by Britton and Lindholm}; see \cite{b10} and 
\cite{b11}:\\
We examine a finite undirected graph without loops that develops over time. The 
node process $(Y_t)_{t\geq 0}$ is a linear birth and death process with initial 
value one. Thus each node gives birth at constant rate $\lambda$ and dies at 
constant rate $\mu$ independently from other nodes. We assume that $(Y_t)_{t\geq0}$ has right-continuous trajectories.
The process $(Y_t)_{t\geq0}$ and all other random variables that are defined in 
what follows to describe the dynamic random graph are defined on a common 
underlying probability space $(\Omega,\mathcal{A},\P)$. 

We assume $\lambda>\mu$, so that the node process $(Y_t)_{t\geq0}$ is a supercritical 
continuous-time Markov branching process.  From standard branching process 
theory, we obtain that the random variable ${ W:=\lim_{t\to\infty} Y_t e^{-(\lambda-\mu)t}}$ exists almost surely and satisfies $\P(W=0)= \mu/\lambda$ and $\mcl( W| W>0)= \mathrm{Exp}(\frac{\lambda-\mu}{\lambda})$ (see e.g.\ \cite{har}, page~319).

We equip every node $i$ with a positive random social index $S_i$, where the 
$S_i$ are i.i.d.\ with finite expectation and independent of all other random variables. 

This allows us to define the development of the edge set. At birth every node is 
isolated.
During its lifetime and \emph{as long as there is at least one other node}, 
node $i$ generates and destroys edges according to a birth and death process 
with constant birth rate $\alpha S_i$ and per-edge death rate $\beta$. Here 
$\alpha$ and $\beta$ are positive constants. The ``second'' node of each newly 
born edge is chosen uniformly at random from the set of all \emph{other} living 
nodes, and all of the edge processes (including the choices of the second 
nodes) are independent of each other and all other events. 

In addition to the direct destruction of edges in the above process, all edges 
connected to a certain node are removed when the node dies.
\begin{bem}
 The only difference to the definition by Britton and Lindholm is that we do not 
allow loops because these are not present in most applications. Note that the 
proofs become slightly simpler if we use the original model by Britton and 
Lindholm, essentially because times where $Y_t=1$ need not obtain special 
treatment. The upper bounds remain largely the same; see also 
Remark~\ref{rem:loopsdontmatter} for the pure birth case. 

Note that we still allow multiple edges. However, it can be shown that those 
are negligible in the sense that the probability that a randomly picked node has at 
least one multiple edge converges to zero at an exponential rate (see 
Appendix \hyperref[A3]{A3}). This allows us to formulate the main result also for the case where 
we ignore multiple edges (see Corollary \ref{without_multiple} below).
\end{bem}

We refer to the distribution of the number of edges incident to a node picked 
uniformly at random from all living nodes at time $t$ given the number of nodes 
is positive as \emph{degree distribution}, and denote it by $\nu_t$. In \cite{b10}, Britton 
and Lindholm give a rather heuristic argument for the convergence of the degree 
distribution
in the original model towards a mixed Poisson distribution $\nu$. 
It is the 
main purpose of this paper to give a rate in total variation distance rather than a mere convergence result, providing full proof for this rate and thereby also for the convergence. The distribution $\nu$ is given by
\begin{equation*}
  \nu = \mathrm{MixPo} \biggl( \frac{\alpha}{\beta+\mu} \bigl( S + \E(S) \bigr) 
\bigl( 1-e^{-(\beta+\mu) A}) \biggr),
\end{equation*}
where $A \sim \mathrm{Exp}(\lambda)$, $S$ has the social index distribution, and 
$A$ and $S$ are independent. Here $\mathrm{MixPo}$ denotes the mixed Poisson 
distribution. The following result is an immediate consequence of the main results proved in this article, 
Theorems~\ref{ss1} and~\ref{thm}.
\begin{satz} \label{superthm}
  Let $\E(S^2)<\infty$. Then we obtain for the degree distribution $\nu_t$ in 
the Britton--Lindholm Model without 
loops
  \begin{enumerate}[(a)]
    \item if $\mu=0$, then $d_{TV}(\nu_t, \nu) = 
O\bigl(\sqrt{t} 
e^{-\frac{1}{2}\lambda t}\bigr)$ as $t \to \infty$;
    \item if $\mu>0$, then $d_{TV}(\nu_t, \nu) = 
O\bigl(t^2 
e^{-\frac{1}{6}(\lambda-\mu) t}\bigr)$ as $t \to \infty$.
  \end{enumerate}
\end{satz}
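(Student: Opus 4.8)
The plan is to interpose a finite-time mixed Poisson law between $\nu_t$ and $\nu$ and to split the total variation distance by the triangle inequality, estimating each piece by one of the two quoted theorems. Write the limit law as $\nu=\mathrm{MixPo}(\Xi)$ with intensity $\Xi=\frac{\alpha}{\beta+\mu}\bigl(S+\E(S)\bigr)\bigl(1-e^{-(\beta+\mu)A}\bigr)$, where $A\sim\mathrm{Exp}(\lambda)$ is the limiting age. Replacing $A$ by the age $A_t$ of an individual drawn uniformly at random among the living nodes at time $t$ (conditioned on $Y_t>0$) produces a finite-time mixed Poisson law $\tilde\nu_t=\mathrm{MixPo}(\tilde\Xi_t)$ with $\tilde\Xi_t=\frac{\alpha}{\beta+\mu}\bigl(S+\E(S)\bigr)\bigl(1-e^{-(\beta+\mu)A_t}\bigr)$. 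Then
\[
  d_{TV}(\nu_t,\nu)\ \le\ d_{TV}(\nu_t,\tilde\nu_t)\ +\ d_{TV}(\tilde\nu_t,\nu),
\]
and it remains to bound the two summands, which is exactly what Theorems~\ref{ss1} and~\ref{thm} provide.

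For the first summand I would invoke Theorem~\ref{ss1}, which pins down the degree distribution $\nu_t$ at finite time. The point is that, conditionally on the age $A_t$ and the social index $S$ of the chosen node and on the evolution of the population, the incident edges are generated by superimposed birth-and-death mechanisms, so the conditional degree count is genuinely (mixed) Poisson up to the discrepancies caused by the special treatment of the initial phase (times at which $Y_t=1$), by the uniform choice of the second endpoint among the finitely many other living nodes, and by multiple edges. Theorem~\ref{ss1} quantifies these discrepancies; the moment assumption $\E(S^2)<\infty$ enters here to keep the Poisson intensity integrable and to control its fluctuations, and the distinction between $\mu=0$ and $\mu>0$ already appears at this stage because the relevant population sizes concentrate at different rates.

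For the second summand I would apply Theorem~\ref{thm}, whose core is the approximation result for mixed Poisson distributions: since $\tilde\nu_t$ and $\nu$ differ only through the age variables $A_t$ and $A$ entering the \emph{same} functional, the total variation distance is dominated by an expression of the form $\E\bigl|\tilde\Xi_t-\Xi\bigr|$, up to the mixed-Poisson smoothing factor, which in turn is controlled by the speed at which the finite-time age distribution converges to $\mathrm{Exp}(\lambda)$. This is where the derivation of the age distribution of a uniformly chosen individual does the work: the exponential limit is approached at a rate governed by $\lambda$ in the pure birth case and by $\lambda-\mu$, under conditioning on survival, when $\mu>0$, and the truncation of $A_t$ at the age of the founding node (no node can be older than $t$, whereas $A$ has unbounded support) contributes the polynomial prefactors.

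Finally I would read off the asymptotics. In each regime the overall rate is the slower of the two summand rates, so for $\mu=0$ one obtains $O\bigl(\sqrt{t}\,e^{-\frac12\lambda t}\bigr)$ and for $\mu>0$ one obtains $O\bigl(t^{2}e^{-\frac16(\lambda-\mu)t}\bigr)$. I expect the main obstacle to lie not in this bookkeeping but inside Theorems~\ref{ss1} and~\ref{thm}: namely in obtaining a sufficiently sharp finite-time age distribution, and in balancing the two competing error contributions — the probability of encountering atypically old nodes against the mixed-Poisson approximation error for typical nodes — by optimizing an auxiliary split of the time horizon, which is what I expect to produce the fractional exponents $\tfrac12$ and $\tfrac16$ together with the powers $\sqrt{t}$ and $t^{2}$.
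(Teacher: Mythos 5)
Your reduction misreads the roles of the two cited theorems, and the decomposition you build in their place leaves the dominant term unproved. In the paper, Theorems~\ref{ss1} and~\ref{thm} are not the two halves of a triangle inequality: Theorem~\ref{ss1} is already the complete bound $d_{TV}(\nu_t,\nu)=O(\sqrt{t}\,e^{-\lambda t/2})$ for the pure birth case $\mu=0$, and Theorem~\ref{thm} is already the complete bound $d_{TV}(\nu_t,\nu)=O(t^2 e^{-(\lambda-\mu)t/6})$ for $\mu>0$; Theorem~\ref{superthm} follows by simply reading off case (a) from the first and case (b) from the second. Neither theorem says anything about your intermediate law $\tilde\nu_t$, and neither ``quantifies the discrepancy of $\nu_t$ from being mixed Poisson'' --- Section~\ref{sec4} shows that $\nu_t$ is \emph{exactly} mixed Poisson, $\nu_t=\mathrm{MixPo}(\Lambda^*_t)$, with a parameter $\Lambda^*_t$ that is a functional of the entire population history (all social indices, all event times, all survival indicators), not just of the pair $(S,A_t)$.

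That last point is where your argument has a genuine hole. The summand $d_{TV}(\tilde\nu_t,\nu)$ is indeed controlled by the convergence of the age distribution (Corollary~\ref{gage}) combined with the mixed Poisson comparison of Theorem~\ref{s2}, and that piece of your plan is sound. But the summand $d_{TV}(\nu_t,\tilde\nu_t)$ requires comparing $\Lambda^*_t$ --- a sum over all living nodes involving $1/(Y_{T_l}-1)$, the inter-event times, and indicators of survival to time $t$ --- with the simple two-variable intensity $\tilde\Xi_t$. That comparison is the bulk of the paper (law of large numbers for the social indices, concentration of the surviving fraction $R_{T_l,t}/Y_{T_l}$ around $e^{-\mu(t-T_l)}$, supermartingale bounds for $1/Y_t$, control of the number of events via $\kappa(t)$), and it is exactly where the exponents $\tfrac12$ and $\tfrac16$ and the prefactors $\sqrt{t}$ and $t^2$ arise; they do not come from optimizing a time-horizon split against atypically old nodes. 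Your proposal asserts the final rates for this term without any argument, so as written it does not constitute a proof.
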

\begin{bem}

A positive death rate $\mu$ leads to a higher variability in the degree distributions for finite $t$ since in particular 
the death of a highly connected node (hub) can have a large impact. Thus we would not expect the same rate as in the pure birth case. However, the actual factor in the exponential
rate may be larger than the one stated in the theorem.  
\end{bem}

Theorem \ref{superthm} has consequences for the case where we ignore multiple 
edges. Let $\tilde \nu_t$ be the distribution of the number of neighbours of a 
node picked uniformly at random from all living nodes at time $t$ given the 
number of nodes at time $t$ is positive. We refer to $\tilde \nu_t$ shortly as 
distribution of the number of neighbours. The convergence of this distribution to 
the asymptotic degree distribution $\nu$ is an immediate consequence of the 
following corollary, which is proved in Appendix \hyperref[A3]{A3}.
\begin{kor}\label{without_multiple}
 Let $\E(S^2)<\infty$. For the distribution of the number of neighbours in the 
Britton--Lindholm Model 
without loops, we have that $d_{TV}(\tilde \nu_t, \nu) = O(t^{2 }
e^{-\frac{1}{6}(\lambda-\mu)t})$ as $t \to \infty$.
\end{kor}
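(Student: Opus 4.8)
The plan is to separate the number of neighbours from the degree by the triangle inequality,
\begin{equation*}
  d_{TV}(\tilde\nu_t,\nu)\;\le\; d_{TV}(\tilde\nu_t,\nu_t)\;+\;d_{TV}(\nu_t,\nu),
\end{equation*}
and to handle the two summands by completely different means. The second summand is already controlled by Theorem~\ref{superthm}: for $\mu>0$ it is $O\bigl(t^2 e^{-\frac16(\lambda-\mu)t}\bigr)$, while for $\mu=0$ it is $O\bigl(\sqrt t\,e^{-\frac12\lambda t}\bigr)$, which is of smaller order than $t^2 e^{-\frac16\lambda t}$. Hence in both cases $d_{TV}(\nu_t,\nu)=O\bigl(t^2 e^{-\frac16(\lambda-\mu)t}\bigr)$, and the corollary follows once I show that the first summand is at most of this order.

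For the first summand I would invoke the coupling characterisation of total variation distance. Conditionally on $\{Y_t>0\}$, draw a single node $V$ uniformly at random among the living nodes, and let $D$ be the number of edges incident to $V$ (counted with multiplicity) and $\tilde D$ the number of distinct neighbours of $V$. By definition $\mathcal{L}(D)=\nu_t$ and $\mathcal{L}(\tilde D)=\tilde\nu_t$, and $\tilde D=D$ unless $V$ carries at least one multiple edge. This single draw is therefore a coupling, so that
\begin{equation*}
  d_{TV}(\tilde\nu_t,\nu_t)\;\le\;\P(\tilde D\ne D\mid Y_t>0)\;=\;p_t,
  \qquad p_t:=\P\bigl(V\text{ has a multiple edge}\mid Y_t>0\bigr).
\end{equation*}
It now suffices to prove that $p_t$ decays at least as fast as $t^2 e^{-\frac16(\lambda-\mu)t}$; this is the quantitative form of the statement announced in the Remark in Section~\ref{sec1} that multiple edges become negligible at an exponential rate.

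To estimate $p_t$ I would bound the probability that $V$ carries a multiple edge by the expected number of unordered pairs of parallel edges at $V$, and analyse each such pair. Conditioning on the node process $(Y_s)_{s\le t}$ and on the ages and social indices, every edge present at time $t$ was created at some earlier time $s$, at which its endpoint other than its initiator was chosen uniformly among the then living nodes. For two fixed edges incident to $V$, created at times $s_1\le s_2$, to be parallel, the partner chosen at $s_2$ must coincide with the still living partner chosen at $s_1$, an event of conditional probability of order $1/Y_{s_2}$. Summing over the $\binom{D}{2}$ pairs of incident edges and integrating over the creation times against the edge-survival weight $e^{-\beta(t-s)}$ and the partner-survival weight $e^{-\mu(t-s)}$ shows that $p_t$ is governed by terms of the form $\E\bigl[1/Y_{s}\mid Y_t>0\bigr]$ with $s$ close to $t$. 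Since, conditionally on non-extinction, $Y_s$ is geometrically distributed with success probability of order $e^{-(\lambda-\mu)s}$, one has $\E\bigl[1/Y_s\mid Y_s>0\bigr]=O\bigl(s\,e^{-(\lambda-\mu)s}\bigr)$, and after the integration this yields an exponential decay of $p_t$ with rate $(\lambda-\mu)$ up to polynomial factors, comfortably faster than the required $t^2 e^{-\frac16(\lambda-\mu)t}$.

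The main obstacle I anticipate is the rigorous control of the random and possibly small population sizes $Y_s$ at the edge-creation times. A naive replacement of $Y_s$ by its typical value $W e^{(\lambda-\mu)s}$ fails, because $\E[1/W]=\infty$ for the limiting $\mathrm{Exp}(\lambda/(\lambda-\mu))$ variable $W$, so the contribution of atypically small populations must be handled directly through the conditional law of the birth and death process rather than through the limit $W$. A further delicate point is that picking $V$ uniformly among the living nodes size-biases the population and couples $V$'s history with $(Y_s)_{s\le t}$; this dependence, together with the bookkeeping of the three ways a parallel edge can arise (both edges initiated by $V$, both initiated by the common partner, or one of each), is what makes the estimate technical, even though the resulting rate is not tight and is ultimately absorbed by the slower rate coming from Theorem~\ref{superthm}.
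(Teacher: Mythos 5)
Your overall strategy is exactly the paper's: the triangle inequality through $\nu_t$, Theorem~\ref{superthm} for the second summand, and the one-draw coupling bounding $d_{TV}(\tilde\nu_t,\nu_t)$ by the conditional probability $p_t$ that the sampled node carries a multiple edge; the estimation of $p_t$ via a collision (birthday-type) first-moment count against $1/Y_s$ is also the idea behind the paper's Lemma~\ref{negligible}. The one point where your sketch overreaches is the claimed decay of $p_t$ at rate $(\lambda-\mu)$ up to polynomial factors. The obstruction you yourself flag --- the dependence between the degree $D$ of the sampled node and the population sizes $Y_s$ at the edge-creation times --- is not a removable technicality under the hypothesis $\E(S^2)<\infty$ alone: to decouple $\E\bigl(D^2/Y_s\bigr)$ one would need higher moments of $S$ (e.g.\ Cauchy--Schwarz costs a fourth moment of $D$), and with only a second moment the natural rigorous route is the one the paper takes, namely splitting on the events $\{N_T\geq e^{\frac{1}{12}(\lambda-\mu)T}\}$ (controlled by Markov's inequality with $\E(N_T^2)$, which is where $\E(S^2)<\infty$ enters) and $\{\min_{T/2\leq s\leq T}Y_s\leq e^{\frac{1}{3}(\lambda-\mu)T}\}$ (controlled by the supermartingale maximal inequality of Corollary~\ref{supermartingale} together with Lemma~\ref{n3}, plus Lemma~\ref{ne} to ensure the sampled node was born after $T/2$). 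Balancing these three contributions degrades the rate for $p_t$ to $O(T^2e^{-\frac{1}{6}(\lambda-\mu)T})$ rather than $e^{-(\lambda-\mu)T}$. This is still exactly the order required by the corollary, so your argument closes once the bound on $p_t$ is carried out this way; but as written, the assertion that the collision count ``comfortably'' beats the target rate is unjustified, and you should not rely on the stronger decay elsewhere.
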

\begin{bem}
 Britton and Lindholm also introduced a modified model where the ``second'' node 
of each newly born edge is not picked uniformly. Instead the probability for 
each node being the ``second'' node is proportional to its social index. We 
expect that one can prove similar results for this modified model analogously to 
the results for the model that we treat here. Note that the modification can 
lead to a significantly higher ratio of multiple edges. Therefore the modified 
version is less interesting for many applications.
\end{bem}

The rest of the paper is organized as follows. Section~\ref{sec:ddfinite} gives the degree distribution at finite time, which is deduced by using 
results about the birth and death processes of the edges. In 
Section~\ref{sec:bounds}, we derive upper bounds for the total variation 
distance between finite-time and asymptotic degree distributions, treating the 
pure birth case and the general case separately, which leads to 
Theorem~\ref{superthm} above. In order to achieve this, we derive a universal bound for the total variation distance 
between two general mixed Poisson distributions; see Theorem \ref{s2}. In Appendix \hyperref[sec:lbdp]{A1}, we 
collect some well-known and less known facts about linear birth and death processes.  Appendix \hyperref[A2]{A2} contains technical lemmas required for the proof of our main theorem including more detailed facts about linear birth and death processes that might be of general interest. In Appendix \hyperref[A3]{A3}, we treat the negligibility of multiple edges. 

\section{Degree distribution at finite time}\label{sec:ddfinite}

If at least two nodes are alive, each node $i$ spawns edges to other nodes 
according to a birth and death process with constant birth rate $\alpha' = 
\alpha S_{i}$ and linear death rate with factor $\beta$. The one-dimensional 
distributions of this process are well-known. 
%
\begin{proposition}\label{prop:odi_idp}
Let $(Z_t)_{t\geq 0}$ be a birth and death process with constant birth rate 
$\alpha'$ and linear death rate $\beta' = \beta n$ if the process is in state $n 
\in \en=\lbrace 1,2,\ldots\r$. If the process is started deterministically at $k 
\in\en$, we have
\begin{equation*}
   Z_t \sim \mathrm{Po} \bigl(\tfrac{\alpha'}{\beta} (1-e^{-\beta t}) \bigr) 
\ast \mathrm{Bin}\bigl(k,e^{-\beta t}\bigr)
\end{equation*}
for every $t \geq 0$, where $\ast$ denotes convolution.
\end{proposition}
In what follows, we derive the degree distribution at finite time $T$ based on 
this result. 

\subsection{The pure birth case}\label{ssec:ddfinite1}

We deal with the case $\mu=0$ first. Let the 
nodes be ordered by their birth times. Let $S_i$ be the social index of node $i$
and $A_i(T)$ its age at time $T$. For 
convenience, we define $A_2(T)=0$ if $Y_T=1$. Furthermore, given $Y_T=y_T$, let 
the random variable $J_T$ be uniformly distributed on $\lbrace 1, \ldots , y_T 
\rbrace$ and independent of the ages, the social indices and the rest of the 
path $(Y_t)_{0\leq t <T}$. We interpret $J_T$ as the index of a node that is 
randomly picked at time $T$ among all living nodes. 

For the time being, we condition on $(Y_t)_{0\leq t \leq T}=(y_t)_{0\leq t \leq 
T}$, ${(S_k)_{k\in\en}=(s_k)_{k\in\en}}$ and $J_T=j_T$. Let $T=a_1 > \ldots 
> 
a_{y_T}$ denote the corresponding ages of the individuals.

Firstly, we consider the number of edges created by node $j_T$. Assuming 
$y_T>1$, the edges created by $j_T$ form a birth and death process with constant 
birth rate $\alpha s_{j_T}$ and linear 
death rate with factor $\beta$, started in zero at time $T-a_{\max(j_T,2)}$ 
since no edges are created if there is only one living node. By 
Proposition~\ref{prop:odi_idp}, the number of edges alive at time $T$ that 
$j_T$ has created has distribution
\begin{equation} \label{eq:outgoing}
  \mathrm{Po}\biggl(\frac{\alpha s_{j_T}}{\beta}(1-e^{-\beta 
a_{\max(j_T,2)}})\biggr).
\end{equation}

We have to add to this the number of edges alive at time $T$ that \emph{other} 
nodes have created and connect to $j_T$. Consider a fixed node $i \in \lbrace 
1,\ldots,y_T \rbrace \setminus \lbrace j_T \rbrace$ and some time interval of 
the form $[T-a_l,T-a_{l+1})$ for $l \geq i \vee 2$. The number of edges that 
connect $i$ to $j_T$ and that survive until the end of this interval, i.e.\ 
until the birth time $T-a_{l+1}$ of node $l+1$, can be described by a birth and 
death process of the above type again. The birth rate is constant and equal to 
$\alpha s_i 
\frac{1}{l-1}$ because node $i$ creates edges at rate $\alpha s_i$ and 
$\frac{1}{l-1}$ is the probability that an edge that is created in the interval 
$[T-a_l,T-a_{l+1})$ is connected to $j_T$. The death rate is linear again with 
factor $\beta$. By Proposition~\ref{prop:odi_idp}, the number of edges created 
by $i$ in $[T-a_l,T-a_{l+1})$ that connect to $j_T$ and survive until 
$T-a_{l+1}$ is $\mathrm{Po}\bigl(\frac{\alpha s_i}{(l-1) \beta}(1-e^{-\beta 
(a_l-a_{
l+1})})\bigr)$-distributed.

We can extend the time interval by one birth time, i.e.\ we compute the 
distribution of the number of edges that $i$ creates in $[T-a_l,T-a_{l+2})$, 
connect to $j_T$, and survive until $T-a_{l+2}$ by conditioning on the number 
$Z$ of edges that $i$ creates in $[T-a_l,T-a_{l+1})$, connect to $j_T$ and 
survive until $T-a_{l+1}$. Given $Z=z$, by 
Proposition~\ref{prop:odi_idp} and the Markov property, the number of 
edges 
that $i$ creates in $[T-a_l,T-a_{l+2})$, connect to $j_T$, and survive until 
$T-a_{l+2}$ has distribution $\mathrm{Po} \bigl(\tfrac{\alpha s_i}{l \beta} 
(1-e^{-\beta  (a_{l+1}-a_{l+2})}) \bigr) \ast \mathrm{Bin}\bigl(z,e^{-\beta 
(a_{l+1}-a_{l+2})}\bigr)$. We already know from above that $Z \sim 
\mathrm{Po}\bigl(\frac{\alpha s_i}{(l-1) \beta}(1-e^{-\beta 
(a_l-a_{l+1})})\bigr)$. Thus if we do not condition on $Z=z$, we 
obtain the distribution 
\begin{align}\label{3.2}
 \mathrm{Po}\biggl(\frac{\alpha s_i}{l \beta}(1-e^{-\beta(a_{l+1}-a_{l+2})}) + 
e^{-\beta (a_{l+1} - a_{l+2})} \frac{\alpha s_i}{(l-1) 
\beta}(1-e^{-\beta(a_l-a_{l+1})})\biggr).
\end{align}

Starting at $l=\max(i,j_T)$ and iterating the procedure that leads to 
\eqref{3.2} until the whole 
interval $[T-a_{\max(i,j_T)}, T)$ is spanned, we see that the number of edges 
alive at time $T$ that connect to $j_T$ but have been created by other nodes has 
distribution 
\allowdisplaybreaks{\begin{align} 
 & \mathrm{Po}\biggl( \sum\limits_{\substack{{i=1}\\i \not= j_T}}^{y_T} 
\frac{\alpha s_i (1-e^{-\beta a_{y_T}})}{(y_T-1) \beta} + 
\sum\limits_{\substack{{i=1}\\i \not= j_T}}^{y_T} e^{-\beta a_{y_T}} 
\sum_{l=i\vee j_T}^{y_T - 1} \biggl( \prod_{k=1}^{y_t-l-1} 
e^{-\beta(a_{l+k}-a_{l+k+1})} \biggr)\frac{\alpha s_i (1- 
e^{-\beta(a_l-a_{l+1})})}{(l-1) \beta}\biggr)
 \notag\\
 & = \mathrm{Po}\biggl( \frac{\alpha}{\beta} \sum\limits_{\substack{{i=1}\\i 
\not= j_T}}^{y_T} \frac{s_i}{y_T-1}(1-e^{-\beta a_{y_T}}) + 
\frac{\alpha}{\beta} 
\sum\limits_{\substack{{i=1}\\i \not= j_T}}^{y_T} \sum_{l=i\vee j_T}^{y_T - 1} 
\frac{s_i}{l-1} (e^{-\beta a_{l+1}}-e^{-\beta a_l}) \biggr). \label{eq:incoming}
\end{align}}

Since the numbers of outgoing and incoming edges are conditionally independent, the desired 
degree distribution is obtained by convoluting \eqref{eq:outgoing} and 
\eqref{eq:incoming}. Lifting the conditioning on $(Y_t)_{0\leq t\leq T}$, $(S_k)_{k\in\en}$ and~$J_T$, we 
arrive at the following result.
\begin{satz} \label{thm:ddpure}
  For $\mu=0$, the degree distribution in the Britton--Lindholm model without 
loops is the $\mathrm{MixPo}(\Lambda_T)$ distribution, where 
\begin{align}
 \Lambda_T=&\frac{\alpha S_{J_T}}{\beta}(1-e^{-\beta A_{\max(J_T,2)}(T)}) + 
\frac{\alpha}{\beta} \sum\limits_{\substack{{i=1}\\i \not= J_T}}^{Y_T} 
\frac{S_i}{Y_T-1}(1-e^{-\beta A_{Y_T}(T)}) \notag
 \\
 &+ \frac{\alpha}{\beta} \sum\limits_{\substack{{i=1}\\i \not= J_T}}^{Y_T} 
\sum_{l=i\vee J_T}^{Y_T - 1} \frac{S_i}{l-1} (e^{-\beta A_{l+1}(T)}-e^{-\beta 
A_l(T)}). \label{lambda}
\end{align}
\end{satz}

\subsection{The general case} \label{ssec:ddfinite2}
For general $\mu$, the degree distribution can be determined similarly to 
the pure birth case.

Let $0=T_1< \ldots < T_{\mathfrak{B}_T+\mathfrak{D}_T}$ be the event 
times up to $T$. We still enumerate nodes according to their birth times. Denote by $0=T_1^+<\ldots < T_{\mathfrak{B}_T}^+$ their birth times and by $T_i^-$ the death 
time of the $i$-th node. Finally, given the subset of $\lbrace 
1,\ldots,\mathfrak{B}_T\rbrace$ that contains the indices of all living nodes at time $T$, 
let $J_T$ be uniformly distributed on this set and independent of all other 
random variables as before. Note that $S_{J_T}$ is (stochastically) independent 
of $((Y_t)_{0\leq 
t \leq T}, J_T)$.

We condition on $(Y_t)_{0\leq t \leq T}=(y_t)_{0\leq t \leq T}$ with $y_T>0$, 
the social indices $(S_k)_{k\in\en}=(s_k)_{k\in\en}$ and $J_T=j_T$ again. Let 
$b_T$ and $d_T$ denote the corresponding number of births and deaths up to time 
$T$, respectively. Furthermore, let $0=t_1<t_2<\ldots<t_{b_T+d_T}$ and 
$0=t^+_1<t^+_2<\ldots<t^+_{b_T}$ be the corresponding event times and birth 
times up to time $T$, respectively. 

Since deaths of other nodes reduce the number of edges created by node $j_T$, we 
cannot derive the distribution of the number of outgoing edges at time $T$ in 
the same way as for the pure birth process. However, we can derive the total 
number of edges incident to $j_T$ in a similar way as the number of incoming 
edges in the pure birth case. 
Consider a fixed node $i\not=j_T$ that is alive at time $T$ (provided there are 
any) and some time interval of the form $[t_l,t_{l+1})$ for $t_l\geq t_i^+\vee 
t_{j_T}^+$ and $t_{l+1}\leq T$. Note that the nodes $i$ and $j_T$ create edges 
with rate $\alpha s_i$ and $\alpha s_{j_T}$, respectively, and that the 
probability that an edge that is created by $i$ is connected to $j_T$ is 
$\frac{1}{y_{t_l}-1}$ and equal to the probability that an edge that is created 
by $j_T$ is connected to $i$. Thus the number of edges created between $i$ and 
$j_T$ that survive until time $t_{l+1}$ can be described by a birth and death 
process with constant birth rate $\alpha (s_i+s_{j_T}) \frac{1}{y_{t_l}-1}$ and 
linear death rate with 
factor $\beta$ like before. By Proposition~\ref{prop:odi_idp} we obtain that the 
number of edges that are created in $[t_l,t_{l+1})$ between $i$ and $j_T$ and 
survive until $t_{l+1}$ has distribution
\begin{equation*}
  \mathrm{Po}\biggl(\frac{\alpha (s_i+s_{j_T})}{(y_{t_l}-1)\beta} (1-e^{-\beta 
(t_{l+1} -t_l)}) \1_{\lbrace y_{t_l} > 1 \rbrace}\biggl).
\end{equation*}

Let the function $r=r_{(y_t)_{0\leq t\leq T}}:\lbrace 1,\ldots,b_T \rbrace 
\rightarrow \lbrace 1,\ldots,b_T+d_T \rbrace$ be defined such that $t_j^+= 
T_{r(j)}$ for all $j \in \lbrace 1,\ldots,b_T \rbrace $, i.e.\ $r$ maps birth 
number to event number.

Applying the same iterative procedure as in Subsection~\ref{ssec:ddfinite1}, 
starting at $l = r(i)\vee r(j_T)$ and continuing until the whole interval 
$[t_i^+\vee t_{j_T}^+,T)$ is spanned, we can see that the number of edges 
between $i$ and $j_T$ alive at time $T$ has distribution 
\begin{align}
 &\mathrm{Po}\biggl(\frac{\alpha (s_i+s_{j_T})}{(y_T-1)\beta} 
(1-e^{-\beta(T-t_{b_T+d_T})}) \1_{\lbrace y_T>1 \rbrace} 
 \notag \\
 &+ e^{-\beta(T-t_{b_T+d_T})}\hspace{-2.5pt}\sum\limits_{l=r(i)\vee r(j_T)}^{b_T+d_T-1} 
\biggl(\prod\limits_{k=1}^{b_T+d_T-l-1} e^{-\beta (t_{l+k+1} - t_{l+k})} 
\frac{\alpha (s_i+s_{j_T})}{(y_{t_l} 
-1)\beta}(1-e^{-\beta(t_{l+1}-t_l)})\biggr) 
\1_{\lbrace y_{t_l} > 1\rbrace } \biggr). \label{eq:between2}
\end{align}

Since the processes of edges between $i$ and $j_T$ are conditionally mutually independent for 
different $i$, we obtain the desired degree distribution by convoluting the 
distribution \eqref{eq:between2} for $i$ running in ${\{i' \in \{1,\ldots,b_T\} 
\colon i' \neq j_T, t_{i'}^{-} > T \}}$ and lifting the conditioning on $(Y_t)_{0\leq t\leq T}$, 
$(S_k)_{k\in\en}$ and~$J_T$.

\begin{satz} \label{thm:dd}
  The degree distribution in the Britton--Lindholm model without loops is the 
$\mathrm{MixPo}(\Lambda^*_T)$ distribution, where $\Lambda^*_T$ is a 
random variable with 
$\mathcal{L}(\Lambda^*_T)=\mathcal{L}(\Lambda_T|Y_T>0)$ and
\begin{align}
  \Lambda_T:=&\frac{\alpha}{\beta}\sum\limits_{\substack{{i=1}\\i \not= 
J_T}}^{\mathfrak{B}_T} \frac{ S_i+S_{J_T} }{(Y_T-1)} \1_{\lbrace T_i^- > T 
\rbrace}(1-e^{-\beta(T-T_{\mathfrak{B}_T+\mathfrak{D}_T})}) \1_{\lbrace Y_T>1 \rbrace} \notag
 \\
 &+\frac{\alpha}{\beta}\sum\limits_{\substack{{i=1}\\i \not= 
J_T}}^{\mathfrak{B}_T}\1_{\lbrace T_i^- > T \rbrace} \sum\limits_{l=r(i)\vee 
r(J_T)}^{\mathfrak{B}_T+\mathfrak{D}_T-1}   \frac{ S_i+S_{J_T}}{(Y_{T_l} -1)}(e^{-\beta (T - 
T_{l+1})}-e^{-\beta(T-T_l)})\1_{\lbrace Y_{T_l} > 1\rbrace }. \label{lambdag}
\end{align}
\end{satz}

\section{Bounds on the total variation distance between the finite-time and the 
asymptotic degree distributions}\label{sec:bounds}

Since in the pure birth case we obtain a much better bound with considerably 
less work, we treat the cases $\mu=0$ and $\mu>0$ separately in 
Subsections~\ref{ssec:boundpure} and~\ref{ssec:boundgen}, giving the ideas of 
the proofs. The proofs themselves are deferred to 
Subsection~\ref{ssec:boundproofs}.

\subsection{The pure birth case} \label{ssec:boundpure}
Let $\mu=0$.  Fix $T>0$, and let the random variable $\Lambda_T$ be defined as 
in Theorem~\ref{thm:ddpure}.
Furthermore, let $S_{J_\infty}$ and $A_{J_\infty}$ be independent random 
variables such that $S_{J_\infty}(\omega)\hspace{-0.5mm}=\hspace{-0.5mm}S_{J_T}(\omega)$ and 
${A_{J_\infty}(\omega)\hspace{-0.5mm} = \hspace{-0.5mm}F_{\infty}^{-1}(F_T (A_{J_T}(T,\omega)))}$ for all 
$\omega \in \Omega$, where $F_T$ and $F_\infty$ are the cumulative distribution 
functions of $A_{J_T}(T)$ and the $\mathrm{Exp}(\lambda)$ distribution, 
respectively. Note that we have that 
$\mathcal{L}(A_{J_T}) \rightarrow \mathrm{Exp}(\lambda)$ weakly (see  Proposition~\ref{age} in the appendix). The random 
variable $A_{J_\infty}$ is $\mathrm{Exp}(\lambda)$ distributed since $F_T$ is 
continuous and hence $F_{T}(A_{J_T}(T))$ is uniformly distributed on $[0,1]$. 
Moreover, since the exponential distribution stochastically dominates the 
truncated exponential distribution, we have $F_{\infty}(A_{J_T}(T,\omega)) \leq 
F_T(A_{J_T}(T,\omega))$ for all $\omega \in \Omega$. Since $F_\infty^{-1}$ is 
increasing, it follows that $A_{J_T}(T,\omega) \leq A_{J_\infty}(\omega)$ for 
all $\omega \in \Omega$. 

Let $S$ be a generic random variable that is distributed according to the 
distribution of the social indices and let
\begin{align}
 \M=\frac{\alpha S_{J_\infty}}{\beta} (1-e^{-\beta A_{J_\infty}})+ \frac{\alpha 
\E(S)}{\beta}(1- e^{-\beta A_{J_\infty}}).\label{mu}
\end{align}

We know from Theorem~\ref{thm:ddpure} that $\mathrm{MixPo}(\Lambda_T)$ is the 
degree distribution at time $T$ in the pure birth case. Theorem \ref{ss1} below 
implies that $\mathrm{MixPo}(\Lambda_T)$ converges at rate of just a bit 
slower than $e^{-\frac{1}{2}\lambda T}$ to $\mathrm{MixPo}(\M)$ as $T 
\rightarrow 
\infty$, which is the asymptotic degree distribution already stated in Section 
3.2 of \cite{b10}. Note that the theorem below is much more powerful as it gives 
an exact distance bound for finite~$T$.
\begin{satz}\label{ss1}
Let $\sigma_S$ be the standard deviation of $S$. Then for $T \geq \frac{\log(2)}{\lambda}$, we have
\begin{align*}
&d_{TV}(\mathrm{MixPo}(\Lambda_T),\mathrm{MixPo}(\M))\leq \frac{\sqrt{32} 
\alpha}{\sqrt\lambda}\sigma_S\sqrt{{ T }} e^{-\frac{1}{2}\lambda T}  
 +{4 \alpha} \E(S)\bl { T}+\frac{\lambda}{\beta(\beta+\lambda)}\br e^{-\lambda 
T}.
\end{align*}
\end{satz}

The main idea of the proof is as follows.
Theorem \ref{s2} below is a simple but crucial result that allows to bound 
$d_{TV}(\mathrm{MixPo}(\Lambda_T),\mathrm{MixPo}(\M))$ by 
$\E(|\Lambda_T-\M|)$. In order to bound $\E(|\Lambda_T-\M|)$ further, we 
use that the expected value of the second summand of the right-hand side of 
\eqref{lambda} becomes small as $T \rightarrow \infty$ since the age 
$A_{Y_T}(T)$ of the youngest individual at time $T$ converges quickly to 0, and 
compare the other summands of the right-hand sides of \eqref{lambda} and 
\eqref{mu}. 

For the comparison of the last summands in \eqref{lambda} and \eqref{mu}, 
respectively, we note that the average of the social indices becomes close to 
$\E(S)$ by the Law of Large Numbers. Then we use again that the age 
$A_{Y_T}(T)$ of the youngest individual at time $T$ converges quickly to 0 and 
that $A_{J_T}(T)$ converges quickly to $A_{J_\infty}$ (see Proposition~\ref{age} in the appendix).

Finally, the expected absolute value of the difference between the first summand 
of \eqref{lambda} and the first summand of \eqref{mu} again becomes small since 
$A_{J_T}(T)$ converges quickly to $A_{J_\infty}$.
\begin{bem} \label{rem:loopsdontmatter}
For the original 
model of Britton and Lindholm with loops, we can adapt the proof of Theorem 
\ref{ss1} in such a way that the 
upper bound remains exactly the same. 
\end{bem}

\subsection{The general case} \label{ssec:boundgen}
Let $S_{J_\infty}(\omega)=S_{J_T}(\omega)$ as in the pure birth case. Furthermore, let 
$A_{J_\infty}=Z$, where $Z$ is the random variable from Corollary~\ref{gage}. 
From the proof of this corollary (see~\cite{age}, proof of Corollary~2.4), we 
obtain that we may write
$$A_{J_\infty}(\omega) = \1_{\lbrace J_T (\omega)<Y_T 
(\omega)\rbrace}F_\infty ^{-1}(F_*(A_{J_T}(T,\omega)) + \1_{\lbrace 
J_T(\omega)=Y_T(\omega)\rbrace} \tilde Z(\omega),$$
where $F_\infty$ is the cumulative distribution function of the 
$\mathrm{Exp}(\lambda)$ 
distribution as before, $\tilde Z \sim F_\infty$ independent of everything and 
$F_*(t)=1-\frac{e^{-\lambda t}-e^{-(\lambda-\mu)T}e^{-\mu 
t}}{1-e^{-(\lambda-\mu)T}}$. Note that, given $Y_T>0$, we have $A_{J_\infty}\sim \mathrm{Exp}(\lambda)$.

Recall the definition of the parameter random variable $\Lambda_T$ from 
\eqref{lambdag} and let $\Lambda^*_T$ be a random variable with 
${\mathcal{L}(\Lambda^*_T)= \mathcal{L}(\Lambda_T | Y_T>0)}$ as before. 
Moreover, set 
\begin{align}
 \M=\frac{\alpha S_{J_\infty}}{\beta+\mu} (1-e^{-(\beta+\mu) A_{J_\infty}})+ 
\frac{\alpha \E(S)}{\beta+\mu}(1- e^{-(\beta+\mu) A_{J_\infty}})\label{mu2}
\end{align}
and let $\M^*$ be a random variable with $\mathcal{L}(\M^*)= \mathcal{L}(\M | 
Y_T>0)$.

From Theorem~\ref{thm:dd}, we know that $\mathrm{MixPo}(\Lambda^*_T)$ is the 
degree distribution at time $T$ in the general case. Theorem~\ref{thm} below 
implies that $\mathrm{MixPo}(\Lambda^*_T)$ converges at a rate of just a bit 
over $e^{-\frac{1}{6}(\lambda-\mu)T}$ to the $\mathrm{MixPo}(\M^*)$ 
distribution, 
which is the asymptotic degree distribution stated in Section 3.2 of \cite{b10} as $T \rightarrow \infty$. Note again that this theorem is much more powerful 
since it gives an exact bound for the total variation distance for finite 
$T$.
\begin{satz}\label{thm}
 Let $\sigma_S<\infty$ be the standard deviation of $S$. Then for $T\geq 
\frac{2\log(4( {\lambda}(\lambda-\mu)^{-1}))}{\lambda-\mu}$, we 
have\fontsize{10.6}{11}
\begin{align*}
&d_{TV}(\mathrm{MixPo}(\Lambda^*_T),\mathrm{MixPo}(\M^*))
 \\
 &\leq \alpha\Biggl( 
\biggl(\frac{5\sqrt{6}}{2}\frac{\lambda}{\lambda-\mu}+\frac{2}{5}
+\biggl(\beta+\frac{\mu}{2}\br \bl \frac{229}{5 
(\lambda-\mu)}+\frac{2}{\lambda+\mu}     \br\biggr) 
\E(S)+\frac{27}{10}\sqrt{2} \sigma_S \Biggr)  (\lambda-\mu)T^2 
e^{-\frac{1}{6}(\lambda-\mu)T} 
\\
 & +  \alpha\beta\Biggl(\biggl(\frac 
1 2+\frac{\mu}{4\beta}\biggr)\frac{\mu}{\lambda}+\biggl(\frac{ 8 } { \mu } 
+\frac { 4 } { \beta} \biggr)\frac { \lambda^3(\lambda+\mu)}{(\lambda-\mu)^3}  
 + \biggl(6T+\frac{\sqrt{6}}{2}+\frac{3}{\beta}(\mu 
T+3)+\frac{5\lambda}{\beta^2}  \biggr)\lambda\Biggr) \E(S) T^2
e^{-(\lambda-\mu)T}\notag
 \\
 &+ 3\sqrt{2}\alpha\lambda \sigma_S T^2
e^{-(\lambda-\mu)T}.
\end{align*}\normalsize
Note that the right-hand side is of the order
$$O(T^2) e^{-\frac{1}{6}(\lambda-\mu)T}$$
as $T\to \infty$.
\end{satz}
\vspace{3mm}
The main idea of the proof of this theorem is the same as in the pure birth 
case: We make use of Theorem \ref{s2} to obtain $\E(|\Lambda_T-\M|\ |Y_T>0)$ 
as an upper bound for 
$d_{TV}(\mathrm{MixPo}(\Lambda^*_T),\mathrm{MixPo}(\M^*))$ and establish a 
further bound for this expected value. In order to do so, we use that the 
expected value of the first summand of the right-hand side of \eqref{lambdag} 
converges quickly to zero since the time $T-T_{\mathfrak{B}_T+\mathfrak{D}_T}$ since the last event 
before $T$ 
converges quickly to 0, and compare the remaining summand of the right-hand 
side of \eqref{lambdag} with the right-hand side of \eqref{mu2}. For this 
comparison, we make vital use of the fact that the average of the social indices 
of the nodes living at time $T$ is close to $\E(S)$ by the Law of Large Numbers 
again and that, given $T_l$ for some large $l\in\en$, the percentage of the nodes 
living at time $T_l$ that survive up to time $T$ is approximately 
$e^{-\mu(T-T_l)}$ by the Law of Large Numbers (see Lemma~\ref{nl1} in the appendix).

A further important ingredient is that the reciprocal of the node process $(Y_t)_{t\geq 0} $
conditioned on survival is a supermartingale (see Lemma 
\ref{supermartingale} in the appendix), which makes it easy to deal with the expected value of 
its maximum. Finally, we also use that the age $A_{J_T}(T)$ of the randomly 
picked individual converges quickly to $A_{J_\infty}$, which was recently proven in \cite{age} (see Corollary \ref{gage} in the appendix for the precise result we apply). 
\\

The fact that nodes may die complicates the procedure considerably since the 
population size after a fixed number of events is random in this case and 
additional dependencies have to be treated (e.g.\ the inter-event times depend 
on the random population size at the previous event time).
\\

In order to cope with additional dependencies on the random index $J_T$, we note that the probability that the node 
that is randomly picked is not older than $\frac{T}{2}$ at time $T$ decreases exponentially in $T$ (see Lemma \ref{ne}(i) 
in the appendix). Thus we essentially only have to consider the time 
interval $[\frac{T}{2},\infty)$ instead of the interval $[T_{r(J_T)},\infty)$, where $T_{r(J_T)}$ is the birth time of the randomly picked individual. The choice of $\frac{T}{2}$ as the left endpoint of the interval makes sure that we always have a large number of individuals in the time interval with high probability. We define the random number $\mathcal{K}(T)$ as index of the last event time before $\frac{T}{2}$ now such that $r(J_T)>\mathcal{K}(T)$ is equivalent to $T_{r(J_T)} \geq \frac{T}{2}$ .
\begin{notat}\label{def2}
 Let $\mathcal{K}(T):=\max\lbrace k: T_k< \frac{T}{2}\rbrace$, so that we have
${Y_{T_{\mathcal{K}(T)}}=Y_{\frac{T}{2}}}$ almost surely.
\end{notat}

\subsection{Proofs} \label{ssec:boundproofs}

For the proofs of both Theorem~\ref{ss1} and Theorem~\ref{thm} we are interested in the total variation distance between two mixed Poisson distributions, say $\mathrm{MixPo}(\tilde \Lambda)$ and the $\mathrm{MixPo}(\tilde \M)$. Theorem 2.1 in \cite{yan} gives us an upper bound for the total variation distance between two Poisson distributions. By conditioning we can generalize this result to mixed Poisson distributions.
\begin{satz}\label{s2}
Let $\tilde \Lambda$ and $\tilde \M$ be positive real valued random variables. Then we have 
for the total variation distance between the mixed Poisson distributions 
$\mathrm{MixPo}(\tilde \Lambda)$ and $\mathrm{MixPo}(\tilde \M)$:
$$d_{TV}\bigl(\mathrm{MixPo}(\tilde \Lambda),\mathrm{MixPo}(\tilde \M)\bigr)\leq 
\E\bigl(\min\bigl(\bigl|\sqrt{\tilde \Lambda}-\sqrt{\tilde \M}\bigr|,\bigl|\tilde \Lambda-\tilde \M\bigr|\bigr)\bigr).$$
\end{satz}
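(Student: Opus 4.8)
The plan is to reduce the statement to the corresponding bound for ordinary (non-mixed) Poisson distributions by conditioning on the values of $\Lambda$ and $\M$, and then to invoke Theorem 2.1 of \cite{yan} pointwise. First I would record the deterministic ingredient: for fixed reals $\ell,m>0$ that result yields
$$d_{TV}(\mathrm{Po}(\ell),\mathrm{Po}(m)) \leq \min\bigl(|\sqrt{\ell}-\sqrt{m}|,\,|\ell-m|\bigr).$$
(The summand $|\ell-m|$ comes from the standard thinning coupling, and $|\sqrt{\ell}-\sqrt{m}|$ is exactly the bound supplied by \cite{yan}; together they give the minimum.)

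Next I would express both mixed Poisson laws through their mass functions. Writing $p_k(x)=e^{-x}x^k/k!$ for $x>0$ and $k\in\en_0$, we have $\mathrm{MixPo}(\Lambda)(\{k\})=\E\bigl(p_k(\Lambda)\bigr)$ and likewise for $\M$, so that
$$d_{TV}\bigl(\mathrm{MixPo}(\Lambda),\mathrm{MixPo}(\M)\bigr)=\frac12\sum_{k=0}^{\infty}\bigl|\E\bigl(p_k(\Lambda)\bigr)-\E\bigl(p_k(\M)\bigr)\bigr|=\frac12\sum_{k=0}^{\infty}\bigl|\E\bigl(p_k(\Lambda)-p_k(\M)\bigr)\bigr|.$$
I would then pull the expectation out of each absolute value by the triangle inequality for integrals, and interchange the (nonnegative) sum with the expectation by Tonelli's theorem, which gives
$$d_{TV}\bigl(\mathrm{MixPo}(\Lambda),\mathrm{MixPo}(\M)\bigr)\leq \E\biggl(\frac12\sum_{k=0}^{\infty}\bigl|p_k(\Lambda)-p_k(\M)\bigr|\biggr)=\E\Bigl(d_{TV}\bigl(\mathrm{Po}(\Lambda),\mathrm{Po}(\M)\bigr)\Bigr),$$
the last equality because the inner sum is, for each fixed realization, exactly the total variation distance between the two conditional Poisson laws.

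Finally I would apply the deterministic bound inside the expectation: for almost every $\omega$ the values $\Lambda(\omega),\M(\omega)$ are positive, so $d_{TV}(\mathrm{Po}(\Lambda(\omega)),\mathrm{Po}(\M(\omega)))\leq \min(|\sqrt{\Lambda(\omega)}-\sqrt{\M(\omega)}|,|\Lambda(\omega)-\M(\omega)|)$, and monotonicity of the expectation yields the claimed inequality.

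The only point that needs genuine care is the interchange of the infinite summation with the expectation; this is harmless here because all summands $|p_k(\Lambda)-p_k(\M)|$ are nonnegative, so Tonelli applies without an integrability hypothesis. I would also note that $\Lambda$ and $\M$ are allowed to be dependent (they live on the common space $(\Omega,\mca,\P)$): this causes no difficulty, since the argument only ever integrates against their joint law and the Poisson structure is used purely pointwise. Thus no independence or coupling construction between $\Lambda$ and $\M$ is required.
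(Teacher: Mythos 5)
Your proof is correct and follows essentially the same route as the paper: both reduce the claim to the pointwise Poisson--Poisson bound of Theorem 2.1 in \cite{yan} via the convexity inequality $d_{TV}(\mathrm{MixPo}(\Lambda),\mathrm{MixPo}(\M))\leq \E\bigl(d_{TV}(\mathrm{Po}(\Lambda),\mathrm{Po}(\M))\bigr)$. The only (cosmetic) difference is that you establish this inequality through the $\tfrac12\ell^1$ formula for the total variation distance together with Tonelli, whereas the paper uses the supremum-over-sets definition and interchanges the supremum with the expectation.
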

\begin{proof}
 Let $X$ and $Y$ be $\mathrm{MixPo}(\tilde \Lambda)$ and $\mathrm{MixPo}(\tilde \M)$ 
distributed, respectively. Then it follows
 \allowdisplaybreaks{\begin{align*} 
d_{TV}\bigl(\mathrm{MixPo}(\tilde \Lambda),\mathrm{MixPo}(\tilde \M)\bigr)&=\sup\limits_{A\subset
\en_0} \bigl| \P(X\in A)-\P(Y\in A)\bigr|
  \\ &=  \sup\limits_{A\subset\en_0} \bigl|\E(\P(X\in A|\tilde \Lambda))-\E(\P(Y\in
A|\tilde \M))\bigr| 
  \\ &\leq\sup\limits_{A\subset\en_0} \E \bigl(|\P(X\in A|\tilde \Lambda)-\P(Y\in A|\tilde \M)| \bigr)
  \\ &\leq  \E\biggl(\sup\limits_{A\subset\en_0}|(\P(X\in 
A|\tilde \Lambda))-(\P(Y\in
A|\tilde \M))|\biggr)
  \\ &= \E\bigl(d_{TV}(\mathcal{L}(X|\tilde \Lambda), \mathcal{L}(Y|\tilde \M)\bigr)
  \\ &\leq \E\bigl(\min\bigl(\bigl|\sqrt{\tilde \Lambda}-\sqrt{\tilde \M}\bigr|,\bigl|\tilde \Lambda-\tilde \M\bigr|\bigr)\bigr),
 \end{align*}}where the last line follows from Theorem 2.1 in \cite{yan}.
\end{proof}
\begin{bem}\label{k1}
Note that the mixed Poisson distribution depends on the parameter random variable only via its distribution; therefore Theorem \ref{s2} yields
 $$d_{TV}(\mathrm{MixPo}(\tilde \Lambda),\mathrm{MixPo}(\tilde \M))\leq 
\inf\limits_{\substack{{\hat \Lambda: \hat \Lambda 
\stackrel{\mathcal{D}}{=}\tilde \Lambda}\\{{\hat \M: \hat \M 
\stackrel{\mathcal{D}}{=} \tilde \M}}}}  \E(\min(|\hat \Lambda^{\frac 1 2 } -\hat 
\M^{\frac 1 2 }|,|\hat \Lambda-\hat \M|).$$
\end{bem}

\subsubsection*{Proof of Theorem \ref{ss1}}

From Theorem \ref{s2} follows
 {\allowdisplaybreaks\begin{align*}
  d_{TV}(\mathrm{MixPo}(\Lambda_T),\mathrm{MixPo}(\M)) &\leq   
\E(|\M-\Lambda_T|),\notag
  \end{align*}
  and $\E(|\M-\Lambda_T|)$ is smaller than or equal to
  \begin{align}
  &   \E\biggl| \frac{\alpha}{\beta}\biggl( S_{J_\infty}(1-e^{-\beta  
A_{J_\infty}})+\E(S)(1-e^{-\beta  A_{J_\infty}})-  S_{J_T}(1-e^{-\beta  
A_{\max(J_T,2)}(T)})\notag  \\ 
   &\hphantom{ \E\biggl| \frac{\alpha}{\beta}\biggl( } - 
\sum\limits_{\substack{{i=1}\\i \not= J_T}}^{Y_T} \frac{S_i}{Y_T-1}(1-e^{-\beta 
A_{Y_T}(T)}) -\sum\limits_{\substack{{i=1}\\i \not= J_T}}^{Y_T} 
\sum\limits_{l=i\vee J_T}^{Y_T-1} \frac{S_i}{l-1} (e^{-\beta 
A_{l+1}(T)}-e^{-\beta A_l(T)})\biggr)\biggr|
 \notag
 \\
  &\leq  \frac \alpha \beta  \E\biggl| \sum\limits_{\substack{{i=1}\\i \not= 
J_T}}^{Y_T} \frac{S_i}{Y_T-1}(1-e^{-\beta A_{Y_T}(T)}) \biggr| \notag\\
  & \ \ \  +\frac \alpha \beta \E\biggl| \sum\limits_{\substack{{i=1}\\i \not= 
J_T}}^{Y_T} 
\sum\limits_{l=i}^{Y_T-1} \frac{(S_i-\E(S))} {l-1} \1_{\lbrace J_T\leq 
l\rbrace} 
(e^{-\beta A_{l+1}(T)}-e^{-\beta A_l(T)})\biggr| \notag\\
  & \ \ \  + \frac \alpha \beta \E\biggl|  \E(S) (1-e^{-\beta  A_{J_\infty}})- 
\sum\limits_{\substack{{i=1}\\i \not= J_T}}^{Y_T} \sum\limits_{l=i\vee 
J_T}^{Y_T-1} \frac{\E(S)}{l-1}(e^{-\beta A_{l+1}(T)}-e^{-\beta A_l(T)}))  
\biggr| \notag  \\
   & \ \ \   +\frac \alpha \beta \E\biggl|  S_{J_\infty}(1-e^{-\beta  
A_{J_\infty}})  -  S_{J_T}(1-e^{-\beta  A_{\max(J_T,2)}(T)}) \biggr| ,    
\label{a}
 \end{align}}where we use the convention $\frac{0}{0}:=0$.
 \\
 
For the first line of the right-hand side, we have
\begin{align}
 \E\biggl| \sum\limits_{\substack{{i=1}\\i \not= J_T}}^{Y_T} 
\frac{S_i}{Y_T-1}(1-e^{-\beta A_{Y_T}(T)}) \biggr|
 &=\E\biggl(\E\biggl(\sum\limits_{\substack{{i=1}\\i \not= J_T}}^{Y_T} 
\frac{S_i}{Y_T-1}(1-e^{-\beta A_{Y_T}(T)}) \biggr| Y_T, J_T\biggr)\biggr)\notag
 \\
 &= \E\biggl(\sum\limits_{\substack{{i=1}\\i \not= J_T}}^{Y_T} 
\frac{\E(S_i)}{Y_T-1}(1-\E(e^{-\beta A_{Y_T}(T)}| Y_T)) \biggr), \label{c1}
\end{align}
where the last equality holds since, given $Y_T$, the age $A_{Y_T}(T)$ and 
$J_T$ are independent.

Given $Y_T=y_T>1$, the age $A_{y_T}(T)$ is the minimum of all ages, i.e.\ the minimum of $y_T-1$ i.i.d.\ 
truncated exponentially distributed random variables (see Proposition \ref{age} in the appendix). 
Since the minimum of $y_T-1$ independent $\mathrm{Exp}(\lambda)$ distributed 
random variables is $\mathrm{Exp}((y_T-1) \lambda)$ distributed, the 
distribution of $A_{y_T}(T)$ is stochastically dominated by the 
$\mathrm{Exp}((y_T-1) \lambda)$ distribution. In general, we have for random 
variables $\hat X$ and $\hat Y$ with $\hat X \leq_{st} \hat Y$ that $f(\hat X) 
\leq_{st} f(\hat Y)$ and consequently $\E(f(\hat X)) \leq \E(f(\hat Y))$ for 
every increasing function $f$. Thus the right-hand side of \eqref{c1} is smaller 
than or equal to
\begin{align*}
 \E\biggl(\E(S) 
\frac{1}{\lambda}\frac{\beta}{Y_T-1+\frac{\beta}{\lambda}}\1_{\l 
Y_T>1\r}\biggr)\leq 
\frac{\beta}{\lambda} \E(S) \E\biggl(\frac{1}{Y_T-1} \1_{\l Y_T>1\r}\biggr).
\end{align*}
Note that
\begin{align*}
 &\E\biggl(\frac{1}{Y_T-1} \1_{\l Y_T>1\r}\biggr)  = \sum_{n=2}^{\infty} 
\frac{p_n(T) }{n-1}= \frac{\lambda}{\lambda e^{\lambda T}} \sum_{n=2}^{\infty} 
\frac{1}{n-1} \biggl(\frac{\lambda e^{\lambda T}-\lambda}{\lambda e^{\lambda 
T}}\biggr)^{n-1} \hspace{-1.3mm}\\& = \frac{\lambda}{\lambda e^{\lambda T}} 
\sum_{n=1}^{\infty} 
\frac{1}{n} \biggl(\frac{\lambda e^{\lambda T}-\lambda}{\lambda e^{\lambda 
T}}\biggr)^n \leq \frac{\lambda}{\lambda e^{\lambda T}-\lambda} \sum_{n=1}^{\infty} 
\frac{1}{n} \biggl(\frac{\lambda e^{\lambda T}-\lambda}{\lambda e^{\lambda 
T}}\biggr)^n = \sum_{n=1}^{\infty} \frac{p_n(T) }{n} =\E\bl\frac{1}{Y_T}\br,
\end{align*}
where $p_n(T)$ is the probability mass function of $Y_T$ (see Appendix \hyperref[sec:lbdp]{A1}). Thus the right-hand side of \eqref{c1} is smaller than or equal to
\begin{align*}
 \frac{\beta}{\lambda} \E(S) \E\biggl(\frac{1}{Y_T} \biggr).
\end{align*}

For the second line of the right-hand side of \eqref{a}, we obtain
\allowdisplaybreaks{\begin{align}
 &\E\biggl| \sum\limits_{\substack{{i=1}\\i \not= J_T}}^{Y_T} 
\sum\limits_{l=i}^{Y_T-1} \frac{(S_i-\E(S))} {l-1} \1_{\lbrace J_T\leq 
l\rbrace} 
(e^{-\beta A_{l+1}(T)}-e^{-\beta A_l(T)})\biggr| \notag
 \\
 &=\E\biggl| \sum\limits_{l=2}^{Y_T-1} \frac 1 {l-1} 
\sum\limits_{\substack{{i=1}\\i \not= J_T}}^{l} {(S_i-\E(S))} \1_{\lbrace 
J_T\leq 
l\rbrace} (e^{-\beta A_{l+1}(T)}-e^{-\beta A_l(T)})\biggr| \notag
  \\
 &=\E\biggl(\E\biggl(\biggl| \sum\limits_{l=J_T \vee 2}^{Y_T-1} \frac 1 {l-1}  
\sum\limits_{\substack{{i=1}\\i \not= J_T}}^{l} {(S_i-\E(S))} (e^{-\beta 
A_{l+1}(T)}-e^{-\beta A_l(T)})\biggr|\ \ \biggm| Y_T, J_T\biggr)\biggr) \notag
   \\
 &\leq \E\biggl( \sum\limits_{l=J_T\vee 2}^{Y_T-1} \E\biggl|\frac 1 {l-1}  
\sum\limits_{{i=2}}^{l} {(S_i-\E(S))}\biggr|  \E\biggl(e^{-\beta 
A_{l+1}(T)}(1-e^{-\beta (A_l(T)-A_{l+1}(T))})|Y_T\biggr)\biggr) \notag
     \\
 &\leq \E\biggl( \sum\limits_{l=J_T\vee 2}^{Y_T-1} \frac 1 {{l-1}} \sqrt{  
\sum\limits_{{i=2}}^{l} {\E\biggl((S_i-\E(S))^2\biggr)}}   
\E(1-e^{-\beta (A_l(T)-A_{l+1}(T))}|Y_T)\biggr)\notag
    \\
 &\leq \E\biggl( \sum\limits_{l=J_T\vee 2}^{Y_T-1} \frac 1 {\sqrt{l-1}} 
\sigma_S 
  \E(1-e^{-\beta (A_l(T)-A_{l+1}(T))}|Y_T)\biggr), \label{a1}
\end{align}}where the fourth line holds since the second sum in the third line is (stochastically) independent of 
$J_T$, the social indices are independent of all other random variables and, 
given $Y_T$, the index $J_T$ is independent of the social indices and ages, and 
the second last line is obtained by applying $\E|Z-\E(Z)|\leq \sqrt{\Var(Z)}$ 
to 
$Z=\frac 1 {l-1} \sum_{{i=2}}^{l} S_i$.

Given $Y_T$ and $A_{l+1}(T)$, the difference $A_l(T)-A_{l+1}(T)$ is the minimum of $l-1$ i.i.d.\ truncated exponentially distributed ages (see Proposition \ref{age} in the appendix) for $l\geq 2$. Thus it is stochastically dominated by an $\mathrm{Exp}((l-1) \lambda)$ distributed random variable $Z_l$, where $Z_l$ can be assumed to independent of $A_{l+1}(T)$. This implies for $a>0$:
\begin{align*}
 &\P(A_l(T)-A_{l+1}(T) \leq a | Y_T) 
 \\ &= \int\limits_0^T \P(A_l(T)-A_{l+1}(T) \leq a | Y_T, 
T-A_{l+1}(T) =x) \P(T-A_{l+1}(T)\in dx|Y_T)
 \\
 &\geq  \P(Z_l\leq a|Y_T).
\end{align*}
Thus, also given $Y_T$ alone, the difference $A_l(T)-A_{l+1}(T)$ is stochastically dominated by $Z_l$. By the same argument as for \eqref{c1}, it follows that \eqref{a1} is smaller than or equal to
\begin{align*}
 \E\biggl( \sum\limits_{l=2}^{Y_T-1}\sigma_S \frac{1}{\sqrt{l-1}}   
\frac{\beta}{(l-1) \lambda + \beta}\1_{\lbrace J_T\leq l \rbrace}\biggr)
  &= \E\biggl( \sum\limits_{l=2}^{Y_T-1}\sigma_S \frac{1}{\sqrt{l-1}}   
\frac{\beta}{(l-1)\lambda + \beta} \E(\1_{\lbrace J_T\leq l \rbrace}| 
Y_T)\biggr)
 \\
 &\leq \E\biggl( \sum\limits_{l=2}^{Y_T-1}\sigma_S \frac{1}{\sqrt{l-1}}   
\frac{\beta}{(l-1)\lambda}\frac{l}{Y_T}\biggr)
   \\
    &\leq \E\biggl( \sum\limits_{l=2}^{Y_T-1}\sigma_S \frac{1}{\sqrt{l-1}}   
\frac{\beta}{(l-1)\lambda}\frac{2(l-1)}{Y_T}\biggr)
   \\
 &=\frac{2\beta}{\lambda} \sigma_S \E\biggl( 
\frac{1}{Y_T}\sum\limits_{l=2}^{Y_T-1} \frac 1 {\sqrt{l-1}} \biggr)
 \\
 &\leq \frac{2\beta}{\lambda} \sigma_S \E\biggl(\frac{1}{Y_T} 
\sum\limits_{l=2}^{Y_T-1} \frac 2 {\sqrt{l-2}+\sqrt{l-1}} \biggr)
 \\
 &=  \frac{2\beta}{\lambda} \sigma_S \E\biggl(\frac{1}{Y_T} 
\sum\limits_{l=2}^{Y_T-1}  2 {(\sqrt{l-1}-\sqrt{l-2})} \biggr)
 \\
 &= \frac{4 \beta}{\lambda} \sigma_S \E\biggl(\frac{\sqrt{Y_T-2}}{Y_T}\biggr)
 \\
 &\leq \frac{4 \beta}{\lambda} \sigma_S \E\biggl(\frac{1}{\sqrt{Y_T}}\biggr).
\end{align*}

Since we arranged $A_{J_T}(T) \leq A_{J_\infty}$, for the third line of the 
right-hand side of \eqref{a} we obtain
\begin{align}
 &\E\biggl|  \E(S) (1-e^{-\beta  A_{J_\infty}})- 
\sum\limits_{\substack{{i=1}\\i \not= J_T}}^{Y_T} \sum\limits_{l=i\vee 
J_T}^{Y_T-1} \frac{\E(S)}{l-1}(e^{-\beta A_{l+1}(T)}-e^{-\beta A_l(T)}))  
\biggr|\notag
 \\
  &=\E\biggl|  \E(S) (1-e^{-\beta  A_{J_\infty}})- \E(S) 
\sum\limits_{l=J_T\vee 2}^{Y_T-1}\frac{1}{l-1}\sum\limits_{\substack{{i=1}\\i 
\not= J_T}}^{l}  (e^{-\beta A_{l+1}(T)}-e^{-\beta A_l(T)}))  \biggr|\notag
 \\
 &= \E\biggl|  \E(S) (1-e^{-\beta  A_{J_\infty}})-  \E(S) \sum\limits_{l= 
J_T\vee 2}^{Y_T-1} (e^{-\beta A_{l+1}(T)}-e^{-\beta A_l(T)}))  \biggr|\notag
  \\
 &= \E\biggl|  \E(S) (1-e^{-\beta  A_{J_\infty}})-  \E(S)  (e^{-\beta 
A_{Y_T}(T)}-e^{-\beta A_{\max(J_T,2)}(T)}))  \biggr|\notag
   \\
 &= \E\biggl(  \E(S) (1-e^{-\beta  A_{Y_T}(T)})+  \E(S)  (e^{-\beta 
A_{\max(J_T,2)}(T)}-e^{-\beta A_{J_\infty} })  \biggr) \notag
    \\
 &= \E(S) \E\bigg(  1-\E(e^{-\beta  A_{Y_T}(T)}|Y_T)\biggr)+  \E(S)  
(\E(e^{-\beta 
A_{\max(J_T,2)}(T) })-\E(e^{-\beta A_{J_\infty}})). \label{neu}
\end{align}
Since $A_{\max(J_T,2)}$ has the $\mathrm{Exp}(\lambda)$ distribution truncated at $T$ (see Proposition \ref{age} in the appendix), we have
\begin{align}
 \E(e^{-\beta A_{\max(J_T,2)}(T)})= \int\limits_0^T e^{-\beta x} \frac{\lambda 
e^{-\lambda x}}{1-e^{-\lambda T}}dx= \frac{\lambda }{1-e^{-\lambda T}} \int\limits_0^T 
e^{-(\beta+\lambda) x} dx= \frac{\lambda}{\beta+\lambda} 
\frac{1-e^{-(\beta+\lambda) T} }{1-e^{-\lambda T}},\label{pro}
\end{align}
whence follows that \eqref{neu} is smaller than or equal to 
\begin{align*}
  & \E(S) \E\biggl( \frac{\beta}{\lambda} \frac{1}{Y_t+\frac{\beta}{\lambda}} 
\biggr) + 
\E(S)\frac{\lambda}{\beta+\lambda}\biggl(\frac{1-e^{-(\beta+\lambda)T}}{1-e^{
-\lambda T}}-1 \biggr) \\
&\leq \frac{\beta}{\lambda} \E(S) 
\E\biggl(\frac{1}{Y_T}\biggr)+\E(S)\frac{\lambda}{\beta+\lambda}  
\frac{1}{e^{\lambda T}-1}.
\end{align*}

Since we arranged $S_{J_T} = S_{J_\infty}$ and $A_{J_T}(T) \leq A_{J_\infty}$, 
for the fourth line of the right-hand side of \eqref{a} we obtain
\begin{align}
  \E\biggl|  S_{J_\infty}(1-e^{-\beta  A_{J_\infty}})  -  S_{J_T}(1-&e^{-\beta 
A_{\max(J_T,2)}(T)}) \biggr|  =\E(S) (  \E(e^{-\beta 
A_{\max(J_T,2)}(T)}) - \E(e^{-\beta A_{J_\infty}}) ) .\label{a2}
\end{align}
Note that the right-hand side of \eqref{a2} is the same 
as the second summand of the right-hand side of \eqref{neu}, which is smaller than or equal to 
$\E(S) \frac{\lambda}{\beta+\mu} \frac{1}{e^{\lambda T}-1}$ (see above).
\\

Altogether, we may conclude that the right-hand side of \eqref{a} is bounded from above by
\begin{align*}
 \frac{2\alpha}{\lambda} \E(S) \E\biggl(\frac{1}{Y_T}\biggr)  
+\frac{4\alpha}{\lambda }\sigma_S\E\biggl( \frac 1 {\sqrt{Y_T}} 
\biggr)+\frac{2\alpha}{\beta}\E(S)\frac{\lambda}{\beta+\lambda}\frac{1}{e^{
\lambda T} -1}.
\end{align*}
Using the upper bounds for $\E(Y_T^{-1} )$ and $\E(Y_T^{-1/2})$ from Propositions~\ref{l1b} and~\ref{ko2} in the appendix, we obtain the statement of the 
theorem. \hspace*{\fill}
$\square$

\vspace{4mm}

\subsubsection*{Proof of Theorem~\ref{thm}}

In order to simplify notation, we introduce 
$\E^*(\pkt)=\E(\pkt|Y_T>0)$, $\P^*(\pkt)=\P(\pkt|Y_T>0)$ and the number of events
$\mathcal{M}_T=\mathfrak{B}_T+\mathfrak{D}_T$ up to time $T$.
\\

As before we use Theorem \ref{s2} to obtain
\begin{align*}
&d_{TV}(\mathrm{MixPo}(\Lambda^*_T),\mathrm{MixPo}(\M^*)) \leq 
\E^*|\Lambda_T- \M|.
\end{align*}
In order to find an upper bound for $\E^*|\Lambda_T- \M|$, we use the 
triangle 
inequality for the absolute value after plugging in the definitions of 
$\Lambda_T$ and $\M$ given by \eqref{lambdag} and \eqref{mu2}, which yields
   \allowdisplaybreaks{\begin{align*}
 & \E^*\biggl|\frac{\alpha(1-e^{-\beta(T-  T_{\mathcal{M}_T})})}{(Y_T-1) \beta} 
\1_{\lbrace  Y_T>1 \rbrace} \sum\limits_{\substack{{i=1}\\i \not= J_T}}^{\mathfrak{B}_T} 
(S_i+S_{J_T}) \1_{\lbrace  T_i^- >T \rbrace} \notag
  \\
  &\ \ \ \ \ \ +\frac \alpha \beta \sum\limits_{\substack{{i=1}\\i \not= 
J_T}}^{\mathfrak{B}_T} (S_i+S_{J_T}) \1_{\lbrace  T_i^->T \rbrace} \sum\limits_{l=r(i)\vee 
r(J_T)}^{\mathcal{M}_T-1} (e^{-\beta(T-  T_{l+1})}-e^{-\beta(T-  T_l)})\frac 1 
{Y_{  T_l}-1}\1_{\lbrace  Y_{T_l}>1 \rbrace}
  \\
  &\ \ \ \ \ \ - \frac \alpha {\beta+\mu} (\E(S)+S_{J_\infty}) 
(1-e^{-(\beta+\mu)  A_{J_\infty}})\biggr| .\notag
\end{align*}This is bounded from above by
\begin{align}
&\E^*\bl \frac{\alpha(1-e^{-\beta(T- T_{\mathcal{M}_T})})}{(Y_T-1) \beta} 
\sum\limits_{\substack{{i=1}\\i \not= J_T}}^{\mathfrak{B}_T} S_i \1_{\lbrace T_i^- >T 
\rbrace}\1_{\lbrace  Y_T>1 \rbrace} \br \label{kk4}
\\
&+\E^*\bl \frac{\alpha(1-e^{-\beta(T- T_{\mathcal{M}_T})})}{(Y_T-1) \beta} 
S_{J_T}\sum\limits_{\substack{{i=1}\\i \not= J_T}}^{\mathfrak{B}_T}  \1_{\lbrace T_i^- >T 
\rbrace}\1_{\lbrace  Y_T>1 \rbrace} \br \label{kk4b}
\\
&+\E^*\biggl| \frac \alpha \beta \sum\limits_{\substack{{i=1}\\i \not= 
J_T}}^{\mathfrak{B}_T} 
(S_i-\E(S))  \1_{\lbrace T_i^->T \rbrace}\sum\limits_{l=r(i)\vee 
r(J_T)}^{\mathcal{M}_T-1} (e^{-\beta(T- T_{l+1})}-e^{-\beta(T- T_l)})\frac 1 
{Y_{ T_l}-1} \1_{\lbrace  Y_{T_l}>1 \rbrace}\biggr| \label{kk1}
\\
\begin{split}
&+\E^*\biggl| \frac \alpha \beta \sum\limits_{l=r(J_T)}^{\mathcal{M}_T-1} \frac 
1 
{Y_{ T_l}-1}\1_{\lbrace  Y_{T_l}>1 \rbrace} \sum\limits_{\substack{{i=1}\\i 
\not= 
J_T}}^{r^{-1}(l)}(\E(S)+S_{J_T}) \1_{\lbrace T_i^->T \rbrace}(e^{-\beta(T- 
T_{l+1})}-e^{-\beta(T- T_l)}) \label{kk3}
\\
&\ \ \  \ \ \ \ - \frac \alpha {\beta+\mu} (\E(S)+S_{J_T})  
\sum\limits_{l=r(J_T)}^{\mathcal{M}_T-1} (e^{-(\beta+\mu)(T- 
T_{l+1})}-e^{-(\beta+\mu)(T- T_l)}) \biggr| 
\end{split}
\\
\begin{split}
&+ \E^*\biggl|\frac \alpha {\beta+\mu} (\E(S)+S_{J_T}) 
\sum\limits_{l=r(J_T)}^{\mathcal{M}_T-1} (e^{-(\beta+\mu)(T-  
T_{l+1})}-e^{-(\beta+\mu)(T-  T_l)}) 
\\
&\ \ \ \ \  \ \  - \frac \alpha {\beta+\mu} (\E(S)+S_{J_\infty}) 
(1-e^{-(\beta+\mu)  A_{J_\infty}}) \biggr|, \label{kk7}
\end{split}
\end{align}}where $r^{-1}(l)$ is the number of births that do not occur later than 
the $l$th event 
for all ${l\in \lbrace 1,\ldots, \mathcal{M}_T \rbrace}$, i.e.\ ${r^{-1}: 
\lbrace 
1,\ldots, \mathcal{M}_T \rbrace \rightarrow \lbrace 1,\ldots, \mathfrak{B}_T \rbrace, 
l\mapsto \sum_{i=1}^{\mathfrak{B}_T}  \1_{\lbrace r(i)\leq l\rbrace}}$.

In the following, we deduce upper bounds for \eqref{kk4}--\eqref{kk7}.

\paragraph{Upper bound for (\ref{kk4}) and (\ref{kk4b})} 
 We treat \eqref{kk4} similarly to the corresponding expression in the pure 
birth case, but condition on $\mathfrak{B}_T$, $\mathfrak{D}_T$, $J_T$ and the information which 
nodes survive up to time $T$, and obtain
\begin{align}
 \E^*\biggl( \frac{\alpha(1-e^{-\beta(T- T_{\mathcal{M}_T })})}{(Y_T-1) 
\beta}\1_{\lbrace Y_T>1 \rbrace}\ \sum\limits_{\substack{{i=1}\\i \not= 
J_T}}^{\mathfrak{B}_T} S_i  &\1_{\lbrace T_i^- >T \rbrace}  \biggr)\leq 
\frac{\alpha}{\beta}\E(S) \E^*(1-e^{-\beta(T- T_{\mathcal{M}_T })} 
).\label{t11}
\end{align}
Since we condition on $J_T$, the same expression is also obtained for 
\eqref{kk4b}. 
\\

 For $T\geq \frac{1}{\lambda-\mu}\log(2)$, we have that the expectation 
$\E^*(1-e^{-\beta(T-T_{\mathcal{M}_T})})$ is smaller than or 
equal to
\begin{align}
  \frac{2}{\lambda}\biggl(\lambda-\mu+\beta 
\biggl(\log\biggl(\frac{\lambda}{\lambda-\mu}\biggr) 
+(\lambda-\mu)T\biggr)\biggr)e^{-(\lambda-\mu)T}\label{31b2}
\end{align}
by Lemma \ref{nn1} in the appendix. Plugging this expression in the right-hand side of \eqref{t11} 
results in the following upper bound for the sum of (\ref{kk4}) and 
(\ref{kk4b}):
\begin{align*}
  \frac{4\alpha \E(S)}{\beta\lambda}\biggl(\lambda-\mu+\beta 
\biggl(\log\biggl(\frac{\lambda}{\lambda-\mu}\biggr) 
+(\lambda-\mu)T\biggr)\biggr)e^{-(\lambda-\mu)T}.
\end{align*}

\paragraph{Upper bound for (\ref{kk1})}
Let $R_{ T_l,T}$ be the number of nodes 
that are alive at time $T_l$ and survive up to time $T$. We compute 
\begin{align}
 &\E^*\biggl(\biggl| \frac \alpha \beta \sum\limits_{\substack{{i=1}\\i \not= 
J_T}}^{\mathfrak{B}_T} (S_i-\E(S))  \1_{\lbrace T_i^->T \rbrace}\sum\limits_{l=r(i)\vee 
r(J_T)}^{\mathcal{M}_T -1} (e^{-\beta(T- T_{l+1})}-e^{-\beta(T- T_l)})\frac 1 
{Y_{ T_l}-1}\1_{\lbrace Y_{T_l}>1 \rbrace } \biggr|\biggr)\notag
 \\
 &\leq  \E^*\biggl(\biggl| \frac \alpha \beta 
\sum\limits_{\substack{ l=r(J_T)\\ R_{ T_l,T}\geq 2}}^{\mathcal{M}_T 
-1}(e^{-\beta(T- T_{l+1})}-e^{-\beta(T- 
T_l)})\frac {R_{ T_l,T}-1} {Y_{ T_l}-1}\frac {\1_{\lbrace Y_{T_l}>1 \rbrace }} 
{R_{ T_l,T}-1}\sum\limits_{\substack{{i=1}\\i \not= J_T}}^{r^{-1}(l)}(S_i-\E(S))
\1_{\lbrace T_i^->T \rbrace}   \biggr|\biggr) \notag
  \\
 &\leq  \E^*\Biggl( \frac \alpha \beta \sum\limits_{\substack{ l=r(J_T)\\ R_{ 
T_l,T}\geq 2}}^{\mathcal{M}_T 
-1}(e^{-\beta(T- T_{l+1})}-e^{-\beta(T- T_l)})\frac {R_{ T_l,T}-1} {Y_{ 
T_l}-1}\1_{\lbrace Y_{T_l}>1 \rbrace }\notag
 \\[-2.5mm]
& \hphantom{\leq ^*\Biggl(} \cdot \Biggl(\frac 1 {R_{ T_l,T}-1} \biggl| 
\sum\limits_{\substack{{i=1}\\i \not= J_T}}^{r^{-1}(l)}(S_i-\E(S)) \1_{\lbrace 
T_i^->T \rbrace}\biggr| \ \ \Biggm|(Y_t)_{0\leq t\leq T}, J_T\Biggr)   
\Biggr). \label{kk5}
\end{align}
Note in the second line that we can restrict the sum to $R_{ T_l,T}\geq 2$ because $R_{ T_l,T}\geq 1$ by $l\geq r(J_T)$ and a summand 
with $R_{ T_l,T}= 1$ in the first line would be 0 anyway.

Since the sum in the last line of \eqref{kk5} has exactly $R_{ T_l,T}-1$ 
summands of the form $S_i-\E(S)$ and all other summands are zero, the 
right-hand 
side of \eqref{kk5} is smaller than or equal to
\begin{align}
  &\E^*\biggl(\biggl| \frac \alpha \beta \sum\limits_{\substack{ l=r(J_T)\\ R_{ 
T_l,T}\geq 2}}^{\mathcal{M}_T 
-1}(e^{-\beta(T- T_{l+1})}-e^{-\beta(T- T_l)})\frac {R_{ T_l,T}-1} {Y_{ 
T_l}-1}\1_{\lbrace Y_{T_l}>1 \rbrace }\frac {\sigma_S} {\sqrt{R_{ T_l,T}-1}} 
\biggr| \biggr) .\label{zw}
\end{align}
Since $e^{-\beta(T- T_{l+1})}-e^{-\beta(T- T_l)} \leq \beta (T_{l+1}-T_l)$ and 
$R_{T_l,T} \leq Y_{T_l}$, the expression \eqref{zw} is smaller than or equal to
\begin{align}
   \E^*\biggl( \alpha  \sum\limits_{l=r(J_T)}^{\mathcal{M}_T -1}  \frac 
{\sigma_S \1_{\lbrace Y_{T_l}>1 \rbrace }} {\sqrt{Y_{ T_l}-1}}( T_{l+1}- 
T_l)\biggr)   &\leq \E^*\biggl( \alpha  \sum\limits_{l=r(J_T)}^{\mathcal{M}_T 
-1}  \frac 
{\sigma_S} {\sqrt{Y_{ T_l}-\frac{Y_{T_l}}{2}}} \1_{\lbrace Y_{T_l}>1 \rbrace }( 
T_{l+1}- T_l)\biggr) \notag
  \\
 &\leq\sqrt{2}\alpha   \sigma_S T \E^*\biggl(  \max\limits_{r(J_T) \leq k \leq 
\mathcal{M}_T -1 } \frac 1 {\sqrt{Y_{ T_k}}}\biggr) \notag
   \\
 &\leq \sqrt{2}\alpha   \sigma_S T \E^*\biggl(\1_{\lbrace  {\mathcal{K}(T)} <
{r(J_T)} \rbrace}{  \max\limits_{r(J_T) \leq k \leq 
\mathcal{M}_T -1 } \frac 1 {\sqrt{Y_{ T_k}}}} \biggr) \notag 
  \\
 &\ \ \ +\sqrt{2}{\alpha} \sigma_S T  \E^*\biggl(\1_{\lbrace {\mathcal{K}(T)} 
\geq 
{r(J_T)} \rbrace} {  \max\limits_{r(J_T) \leq k \leq \mathcal{M}_T -1 } \frac 1 
{\sqrt{Y_{ T_k}}}}\biggr), \label{19}
\end{align}
where $\mathcal{K}(T)=\max\lbrace k: T_k\leq \frac{T}{2}\rbrace$ (see 
Definition \ref{def2}). We have
\begin{align}
 &\E^*\biggl(\1_{\lbrace {\mathcal{K}(T)} <
{r(J_T)} \rbrace}   \max\limits_{r(J_T) \leq k \leq \mathcal{M}_T -1 } \frac 1 
{\sqrt{Y_{ T_k}}}\biggr)\notag
 \\
 &\leq \E\biggl(\1_{ {\{\mathcal{K}(T)} < 
{r(J_T)} \rbrace}   \max\limits_{r(J_T) \leq k \leq \mathcal{M}_T -1 } \frac 1 
{\sqrt{Y_{ T_k}}} \ \biggm|Y_\infty >0\biggr) \P^*(Y_\infty >0)+1\cdot 
\P^*(Y_\infty =0),\label{t1}
\end{align}
where $Y_\infty:= \lim\limits_{t \rightarrow \infty} Y_t $. Note that $Y_\infty\in \lbrace 0,\infty\rbrace$ almost surely.

By Lemma \ref{austerben2} in the appendix, we have for the second summand of the right-hand side 
of \eqref{t1}
\begin{align*}
 \P^*(Y_\infty =0) &= \frac{\mu}{\lambda} e^{-(\lambda-\mu)T}.
\end{align*}
The first summand of the right-hand side of \eqref{t1} is smaller than or equal 
to 
\begin{align}
 \E\biggl(   \max\limits_{\mathcal{K}(T) < k } \frac 1 {\sqrt{Y_{ T_k}}} \ 
\biggm|Y_\infty >0\biggr)&=  \E\biggl(   \max\limits_{T/2 < t } \frac 1 {\sqrt{Y_{ t}}} \ 
\biggm|Y_\infty >0\biggr)\notag
\\
&\leq e^{-\frac 1 6 
(\lambda-\mu)T}+\E\biggl(\frac 1 {Y_{ 
T/2}}\biggm|Y_\infty>0\biggr) e^{\frac 1 3  (\lambda-\mu)T}\notag
 \\
&\leq \biggl(1+2\log\biggl(\frac{\lambda}{\lambda-\mu}\biggr)+
(\lambda-\mu) T\biggr) e^{-\frac{1}{6}(\lambda-\mu)T} \label{kk12}
\end{align}
for $T\geq \frac{2}{\lambda-\mu}\log(2)$ by combining Lemma \ref{hl1}, where 
$\delta=1/2$ and $\gamma=1/6$, and Lemma~\ref{n3}. Thus for 
$T\geq \frac{2}{\lambda-\mu}\log(2)$, the first summand of the 
right-hand side of \eqref{19} is smaller than or equal to
\begin{align}
 \sqrt{2}{\alpha} \frac{\mu}{\lambda} \sigma_S  T 
e^{-(\lambda-\mu)T} + \sqrt{2}\alpha \sigma_S 
\biggl(1+2\log\biggl(\frac{\lambda}{\lambda-\mu}\biggr)+(\lambda-\mu)T\biggr) T 
e^{-\frac{1}{6}(\lambda-\mu)T}.\label{31a}
\end{align}

By Lemma \ref{ne}(i), we obtain that for $T\geq \frac{1}{\lambda-\mu}\log(2)$, 
the second summand of the right-hand side of \eqref{19} is bounded from above by
\begin{align}
\sqrt{2}\alpha \sigma_S T \Biggl( e^{-\frac{1}{2}\lambda 
T}+\frac{2(\lambda-\mu)}{\lambda}\biggl(\log\biggl(\frac{\lambda
} {\lambda-\mu} 
\biggr)+(\lambda-\mu)T\biggr)e^{-(\lambda-\mu)T}\Biggr).\label{31b}
\end{align}
   Thus for $T\geq \frac{2}{\lambda-\mu}\log(2)$, the expression \eqref{kk1} 
is bounded from above by the sum of \eqref{31a} and \eqref{31b}.

\paragraph{Upper bound for (\ref{kk3})}
For \eqref{kk3}, we have
\begin{align}
 &\E^*\biggl| \frac \alpha \beta \sum\limits_{l=r(J_T)}^{\mathcal{M}_T -1} 
\frac 
1 {Y_{ T_l}-1} \1_{\lbrace Y_{ T_l}>1 \rbrace} 
\sum\limits_{\substack{i=1\\ i \not= J_T}}^{r^{-1}(l)}(\E(S)+S_{J_T}) 
\1_{\lbrace T_i^->T \rbrace}(e^{-\beta(T- T_{l+1})}-e^{-\beta(T- T_l)}) \notag
\\
&\ \ \ \ \ \ \ - \frac \alpha {\beta+\mu} (\E(S)+S_{J_T}) 
\sum\limits_{l=r(J_T)}^{\mathcal{M}_T -1} (e^{-(\beta+\mu)(T- 
T_{l+1})}-e^{-(\beta+\mu)(T- T_l)}) \biggr|\notag
\\
&=\E^*\biggl|  \frac {\alpha (\beta+\mu)} {\beta (\beta+\mu)} (\E(S)+S_{J_T}) 
\sum\limits_{l=r(J_T)}^{\mathcal{M}_T -1} \frac {R_{ T_l,T}-1} {Y_{ T_l}-1} 
\1_{\lbrace Y_{ T_l}>1 \rbrace} (e^{-\beta(T- T_{l+1})}-e^{-\beta(T- T_l)}) 
\notag
\\
&\ \ \ \ \ \ \ - \frac {\alpha \beta} {\beta (\beta+\mu)}  (\E(S)+S_{J_T}) 
\sum\limits_{l=r(J_T)}^{\mathcal{M}_T -1} (e^{-(\beta+\mu)(T- 
T_{l+1})}-e^{-(\beta+\mu)(T- T_l)}) \biggr|\label{kk5b}
\end{align}
since the second sum on the left-hand side of \eqref{kk5b} has exactly $R_{ 
T_l,T}-1$ non-zero summands. The right-hand side of \eqref{kk5b} is equal to
\begin{align}
&\E^*\biggl|\frac {\alpha } {\beta (\beta+\mu)}  (\E(S)+S_{J_T}) 
\sum\limits_{l=r(J_T)}^{\mathcal{M}_T -1} \biggl((\beta+\mu) \frac {R_{ 
T_l,T}-1} {Y_{ T_l}-1}\1_{\lbrace Y_{ T_l}>1 \rbrace} e^{-\beta(T- T_{l+1})} 
(1-e^{-\beta( T_{l+1} -  T_l)})\notag
\\
&\ \ \ \ \ - \beta e^{-(\beta+\mu) (T-  T_{l+1})}(1-e^{-(\beta+\mu)( T_{l+1}- T_l)})\biggr) \biggr|.\label{tag1}
\end{align}
Now we use $$1-e^{-\beta( T_{l+1} -  T_l)} = \beta( T_{l+1} -  T_l) - 
\sum_{k=2}^\infty  \frac{(-\beta( T_{l+1}-  T_l))^k}{k!}$$ and 
$$1-e^{-(\beta+\mu)( T_{l+1}- T_l)} = (\beta+\mu)( T_{l+1} -  T_l) - 
\sum_{k=2}^\infty \frac{(-(\beta+\mu)( T_{l+1}-  T_l))^k}{k!}$$ to obtain that 
\eqref{tag1} is smaller than or equal to
\begin{align}
& \E^*\biggl|\frac {\alpha (\E(S)+S_{J_T})} {\beta (\beta+\mu)}   
\sum\limits_{l=r(J_T)}^{\mathcal{M}_T -1} (\beta+\mu) \beta \biggl(\frac {R_{ 
T_l,T}-1} {Y_{ T_l}-1}\1_{\lbrace Y_{ T_l}>1 \rbrace}-e^{-\mu(T- T_{l+1})} 
\biggr) e^{-\beta(T- T_{l+1})} ( T_{l+1}- T_l)\biggr| \notag
\\
& +\E^*\biggl|\frac {\alpha } {\beta (\beta+\mu)}  (\E(S)+S_{J_T}) 
\sum\limits_{l=r(J_T)}^{\mathcal{M}_T -1} \biggl( \beta e^{-(\beta+\mu)(T- 
T_{l+1})} \sum_{k=2}^\infty \frac{(-(\beta+\mu)( T_{l+1}-  T_l))^k}{k!} \notag 
\\
&\hphantom{\E^*\biggl|\frac {\alpha } {\beta (\beta}  (\E(S)+S_{J_T}) 
\sum\limits_{l=r(J_T)}^{\mathcal{M}_T -1}  }- (\beta+\mu) \frac {R_{ 
T_l,T}-1} {Y_{ T_l}-1}\1_{\lbrace Y_{ T_l}>1 \rbrace} e^{-\beta(T- T_{l+1})} 
\sum_{k=2}^\infty  \frac{(-\beta( T_{l+1}-  T_l))^k}{k!}  \biggr)\biggr| \notag
\\
&\leq \E^*\biggl(\alpha    (\E(S)+S_{J_T}) 
\sum\limits_{l=r(J_T)}^{\infty} \1_{\lbrace  T_{l+1} \leq T \rbrace}  
\biggl|\frac {R_{ T_l,T}-1} {Y_{ T_l}-1}\1_{\lbrace Y_{ T_l}>1 
\rbrace}-e^{-\mu(T- T_{l+1})} \biggr|  ( T_{l+1}- 
T_l)\biggr) \notag
\\
& \ \ \ +\E^*\biggl(\frac {\alpha } {\beta+\mu}  (\E(S)+S_{J_T}) 
\sum\limits_{l=r(J_T)}^{\mathcal{M}_T -1}  \sum_{k=2}^\infty 
\frac{(-(\beta+\mu)( T_{l+1}-  T_l))^k}{k!}\biggr) \notag
\\
&\ \ \ + \E^*\biggl(\frac {\alpha } {\beta }  (\E(S)+S_{J_T}) 
\sum\limits_{l=r(J_T)}^{\mathcal{M}_T -1}\sum_{k=2}^\infty  \frac{(-\beta( 
T_{l+1}-  T_l))^k}{k!}  \biggr)\biggr). \label{k10}
\end{align}

Firstly, we consider the first summand of the right-hand side of \eqref{k10}. 
Since the social index $S_{J_T}$ is independent of all other random 
variables appearing in \eqref{k10}, this summand is equal to 
$$2\alpha \E(S) \E^*\biggl(
\sum\limits_{l=r(J_T)}^{\infty} \1_{\lbrace  T_{l+1} \leq T \rbrace}  
\biggl|\frac {R_{ T_l,T}-1} {Y_{ T_l}-1}\1_{\lbrace Y_{ T_l}>1 
\rbrace}-e^{-\mu(T- T_{l+1})} \biggr|  ( T_{l+1}- 
T_l)\biggr).$$
We derive
\begin{align}
 &\E^*\biggl( \sum_{l=r(J_T)}^\infty \1_{\lbrace  T_{l+1} < T \rbrace} \biggl| 
\frac {R_{ T_l,T}-1} {Y_{ T_l}-1}\1_{\lbrace Y_{ T_l}>1 \rbrace}-e^{-\mu(T- 
T_{l+1})} \biggr| ( T_{l+1} -  T_l)  \biggr) \notag
 \\
 &\leq \E^*\biggl( \sum_{l=\mathcal{K}(T)+1}^\infty \1_{\lbrace  T_{l+1} < T 
\rbrace} \biggl| \frac {R_{ T_l,T}-1} {Y_{ T_l}-1}\1_{\lbrace Y_{ T_l}>1 
\rbrace}-e^{-\mu(T- T_{l+1})} \biggr| ( T_{l+1} -  T_l)  \biggr) 
\notag
 \\
 &\hphantom{\leq}+ T \P^*(\mathcal{K}(T)\geq r(J_T)). \label{kk11}
\end{align}
For $\frac{1}{\lambda-\mu}\log(2)$, the second summand of the right-hand side 
of \eqref{kk11} is smaller than or 
equal to 
\begin{align}
 T  e^{-\frac{1}{2}\lambda 
T}+\frac{2(\lambda-\mu)}{\lambda}\biggl(\log\biggl(\frac{\lambda
} {\lambda-\mu} 
\biggr)+(\lambda-\mu)T\biggr)T e^{-(\lambda-\mu)T}\notag 
\end{align}
by Lemma \ref{ne}(i). The first summand of the right-hand side of \eqref{kk11} 
is equal to
\begin{align}
&\E^*\biggl( \sum_{l=\mathcal{K}(T)+1}^\infty \E\biggl( \1_{\lbrace  T_{l+1} < 
T 
\rbrace} \biggl| \frac {R_{ T_l,T}-1} {Y_{ T_l}-1}\1_{\lbrace Y_{ T_l}>1 
\rbrace}-e^{-\mu(T- T_{l+1})}\biggr|( T_{l+1} -  T_l)\hspace{1pt} 
\biggm|\mathcal{K}(T), 
Y_T>0\biggr) \biggr). \notag
\end{align}
For the inner expectation, we have
\begin{align}
 &\E\biggl( \1_{\lbrace  T_{l+1} < T 
\rbrace} \biggl| \frac {R_{ T_l,T}-1} {Y_{ T_l}-1}\1_{\lbrace Y_{ T_l}>1 
\rbrace}-e^{-\mu(T- T_{l+1})}\biggr|( T_{l+1} -  T_l)\hspace{1pt} 
\biggm|\mathcal{K}(T), 
Y_T>0\biggr)\notag
\\
&\leq\E\biggl( \1_{\lbrace  T_{l+1} < T 
\rbrace} \biggl| \frac {R_{ T_l,T}-1} {Y_{ T_l}-1}\1_{\lbrace Y_{ T_l}>1 
\rbrace}-e^{-\mu(T- T_{l+1})}\biggr|( T_{l+1} -  T_l)\hspace{1pt} 
\biggm|\mathcal{K}(T), 
Y_{T_l}>0\biggr) \notag \\ &\hphantom{\leq} \cdot \frac{\P(Y_{T_l}>0|\mck(T))}{\P(Y_T>0|\mck(T))}.\label{q}
\end{align}
For the fraction, we obtain for $l\geq \mathcal{K}(T)$
\begin{align}
\frac{\P(Y_{T_l}>0|\mathcal{K}(T))}{\P(Y_T>0|\mathcal{K}(T)) }\leq 
\frac{\P(Y_{T_{\mathcal{K}(T)}}>0|\mathcal{K}(T))}{\E(\P(Y_T>0|\mathcal{K}(T), 
Y_{T_{\mathcal{K}(T)}})|\mathcal{K}(T))}.\label{tag2}
 \end{align}
Note that, given $\mathcal{K}(T)$ and $Y_{T_{\mathcal{K}(T)}}$, we know that 
exactly $Y_{T_{\mathcal{K}(T)}}$ nodes are alive at time~$\frac{T}{2}$. Thus 
due to the Markov property for $(Y_t)_{t\geq0}$ and the formula for the 
extinction probability of a linear birth and death process with a general 
initial value given in Remark \ref{ext}, the conditional probability 
${\P(Y_T=0|\mathcal{K}(T), 
Y_{T_{\mathcal{K}(T)}})}$ is equal to 
$p_0(\frac{T}{2})^{Y_{T_{\mathcal{K}(T)}}}$, where ${p_0(\frac{T}{2})=\P(Y_{\frac{T}{2}}=0)}$. This implies that \eqref{tag2} is 
equal to 
 \begin{align*}
 &\frac{\P(Y_{T_{\mathcal{K}(T)}}>0|\mathcal{K}(T))}{\E(1-p_0(\frac{1}{2} 
T)^{Y_{T_{\mathcal{K}(T)}}}|\mathcal{K}(T))}\hspace{-1pt}\leq \hspace{-1pt}
\frac{\P(Y_{T_{\mathcal{K}(T)}}>0|\mathcal{K}(T))}{(1-p_0(\frac{1}{2} 
T))\P(Y_{T_{\mathcal{K}(T)}}>0|\mathcal{K}(T))}\hspace{-1pt}=\hspace{-1pt}\frac{\lambda 
e^{\frac{1}{2}(\lambda-\mu)T}-\mu}{(\lambda-\mu)e^{\frac{1}{2}(\lambda-\mu)T}}
\hspace{-1pt}\leq\hspace{-1pt}\frac{\lambda}{\lambda-\mu},
\end{align*}
where we applied the formula for the extinction probability of $(Y_t)_{t\geq t}$, which is stated in Appendix \hyperref[sec:lbdp]{A1}.

Thus the right-hand side of \eqref{q} is smaller than or equal to \fontsize{11.4}{11}
\begin{align}
 &\E\biggl(  \E\biggl(\1_{\lbrace  T_{l+1} < T \rbrace}\biggl|\frac {R_{ 
T_l,T}-1} 
{Y_{ T_l}-1}\1_{\lbrace Y_{ T_l}>1 \rbrace}-e^{-\mu(T- T_{l+1})}\biggr|( 
T_{l+1} 
-  T_l) \ \biggm| Y_{T_l}, T_l, 
\mathcal{K}(T)\biggr)\biggm|\mathcal{K}(T),Y_{T_l}>0\biggr)\notag
\\ &\cdot \frac{\lambda}{\lambda-\mu}.\label{tag4}
\end{align}\normalsize

By the Markov property of $(Y_t)_{t\geq0}$, we may omit the conditioning on 
$\mathcal{K}(T)$ if we condition on $Y_{T_l}$ and $T_l$ for $l\geq 
\mathcal{K}(T)+1$. Conditioning in addition on $T_{l+1}$, we see that \eqref{tag4} is equal to
 \fontsize{11.5}{11}\begin{align}
& \E\biggl(\1_{\lbrace  T_{l+1} < T \rbrace} ( T_{l+1} 
-  T_l) \E\biggl(\biggl|\frac {R_{ T_l,T}-1} 
{Y_{ T_l}-1}\1_{\lbrace Y_{ T_l}>1 \rbrace}-e^{-\mu(T- T_{l+1})}\biggr| \ 
\biggm| 
Y_{T_l}, T_l, T_{l+1}\biggr)\biggm|\mathcal{K}(T),Y_{T_l}>0\biggr) \notag \\& \cdot \frac{\lambda}{\lambda-\mu}. \label{tag4a}
\end{align}\normalsize

By Lemma \ref{nl1}, summing over $l$ and taking the 
expectation yields the following upper bound for the first summand of the 
right-hand side of \eqref{kk11}:
\begin{align}
 &\E^*\biggl( 
\sum\limits_{l=\mathcal{K}(T)+1}^\infty\sqrt{6}\frac{\lambda}{\lambda-\mu}
\E\biggl( 
\1_{\lbrace T_{l+1}< T \rbrace} (T_{l+1}-T_l ) Y_{T_l}^{-\frac{1}{2}} 
\biggm|\mathcal{K}(T), Y_{T_l} >0\biggr) \biggr).  \label{t02}
  \end{align}
Obviously, \eqref{t02} is equal to
  \begin{align}
&\sqrt{6}\frac{\lambda}{\lambda-\mu}\E^*\biggl(\1_{\lbrace Y_\infty>0 
\rbrace}\sum\limits_{l=\mathcal{K}(T)+1}^\infty \E\biggl(\1_{\lbrace T_{l+1}< T 
\rbrace} (T_{l+1}-T_l ) Y_{T_l}^{-\frac{1}{2}} \biggm|\mathcal{K}(T), Y_{T_l} 
>0\biggr)\biggr) \notag
 \\
 &+\sqrt{6}\frac{\lambda}{\lambda-\mu}\E^*\biggl(\1_{\lbrace Y_\infty=0 
\rbrace}\sum\limits_{l=\mathcal{K}(T)+1}^\infty \E\biggl(\1_{\lbrace T_{l+1}< T 
\rbrace} (T_{l+1}-T_l ) Y_{T_l}^{-\frac{1}{2}} \biggm|\mathcal{K}(T), Y_{T_l} 
>0\biggr)\biggr).\notag
  \end{align}
We may conclude that for $T\geq \frac{1}{\lambda-\mu}\log(2)$, the first 
summand of \eqref{k10} is bounded from above by
  \begin{align}
2\alpha \E(S) \Biggl(&\sqrt{6}\frac{\lambda}{\lambda-\mu}\E^*\biggl(\1_{\lbrace 
Y_\infty>0 
\rbrace}\sum\limits_{l=\mathcal{K}(T)+1}^\infty \E\biggl(\1_{\lbrace T_{l+1}< T 
\rbrace} (T_{l+1}-T_l ) Y_{T_l}^{-\frac{1}{2}} \biggm|\mathcal{K}(T), Y_{T_l} 
>0\biggr)\biggr) \notag
 \\
 &+\sqrt{6}\frac{\lambda}{\lambda-\mu}\E^*\biggl(\1_{\lbrace Y_\infty=0 
\rbrace}\sum\limits_{l=\mathcal{K}(T)+1}^\infty \E\biggl(\1_{\lbrace T_{l+1}< T 
\rbrace} (T_{l+1}-T_l ) Y_{T_l}^{-\frac{1}{2}} \biggm|\mathcal{K}(T), Y_{T_l} 
>0\biggr)\biggr)\notag
\\
&+ T  e^{-\frac{1}{2}\lambda 
T}+\frac{2(\lambda-\mu)}{\lambda}\biggl(\log\biggl(\frac{\lambda
} {\lambda-\mu} 
\biggr)+(\lambda-\mu)T\biggr)T e^{-(\lambda-\mu)T}\Biggr).\label{n2} 
  \end{align}

 For the first outer expectation in \eqref{n2}, we have
 \begin{align*}
&\E^*\biggl(\1_{\lbrace Y_\infty>0 
\rbrace}\sum\limits_{l=\mathcal{K}(T)+1}^\infty 
\E\biggl(\1_{\lbrace T_{l+1}< T \rbrace} (T_{l+1}-T_l ) Y_{T_l}^{-\frac{1}{2}} 
\biggm|\mathcal{K}(T), Y_{T_l} >0\biggr)\biggr) 
 \\
 &\leq \ \  \E\biggl(\sum\limits_{l=\mathcal{K}(T)+1}^\infty 
\E\biggl(\1_{\lbrace 
T_{l+1}< T \rbrace} (T_{l+1}-T_l ) Y_{T_l}^{-\frac{1}{2}} \biggm|\mathcal{K}(T) 
\biggr) \biggm| Y_\infty >0\biggr) 
 \\
 &\leq \ \  \E\biggl( \max\limits_ { \mathcal{K}(T) \leq k}   Y_{ T_k}^{-\frac 
1 
2} \sum\limits_{l=\mathcal{K}(T)+1}^\infty \1_{\lbrace T_{l+1}< T \rbrace} 
(T_{l+1}-T_l ) \biggm| Y_\infty>0 \biggr)   
  \\
 &\leq \ \  \frac T 2 \E\biggl( \max\limits_ { \mathcal{K}(T)\leq k }   Y_{ 
T_k}^{-\frac 1 2} \biggm| Y_\infty>0 \biggr) .
\end{align*}
By inequality \eqref{kk12}, we obtain that for $T\geq 
\frac{2}{\lambda-\mu}\log(2)$, this expression is smaller than or equal to
\begin{align*}
\frac T 2 \biggl(1+2\log\biggl(\frac{\lambda}{\lambda-\mu}\biggr)+(\lambda-\mu) T \biggr) e^{-\frac{1}{6}(\lambda-\mu)T}.
\end{align*}

For the second outer expectation in \eqref{n2}, we obtain
\begin{align*}
 &\E\biggl(\1_{\lbrace Y_\infty=0 
\rbrace}\sum\limits_{l=\mathcal{K}(T)+1}^\infty 
\E\biggl(\1_{\lbrace T_{l+1}< T \rbrace} (T_{l+1}-T_l ) Y_{T_l}^{-\frac{1}{2}} 
\biggm|\mathcal{K}(T), Y_{T_l} >0\biggr)\biggm|Y_T>0\biggr)
 \\
 &\leq \frac T 2 \P(Y_\infty=0 |Y_T>0) = \frac T 2 \frac{\mu}{\lambda 
}e^{-(\lambda-\mu)T},
\end{align*}
where the last equality follows from Lemma \ref{austerben2}.
\\

In conclusion, for $T\geq \frac{2}{\lambda-\mu}\log(2)$, the first 
summand of the right-hand 
side of \eqref{k10} is bounded from above by \vspace{-2mm}
\begin{align*}
     &\sqrt{6}\alpha\frac{\lambda}{\lambda-\mu} \E(S) T 
\Biggl(\biggl(1+2\log\biggl(\frac{\lambda}{\lambda-\mu}\biggr)+(\lambda-\mu) T 
\biggr) e^{-\frac{1}{6}(\lambda-\mu)T}+  e^{-(\lambda-\mu)T} \Biggr)  
\\
&+2\alpha \E(S)\Biggl( Te^{-\frac{1}{2}\lambda 
T}+\frac{2(\lambda-\mu)}{\lambda}\biggl(\log\biggl(\frac{\lambda
} {\lambda-\mu} 
\biggr)+(\lambda-\mu)T\biggr)T e^{-(\lambda-\mu)T}\Biggr). 
\end{align*}

Since the social index $S_{J_T}$ is independent of all other random 
variables appearing in \eqref{k10}, the second summand of the right-hand side 
of \eqref{k10} is smaller than or equal to
\begin{align}
 &\frac{2\alpha}{\beta+\mu}\E(S) \E^*\biggl(\sum_{l=r(J_T)}^{\mathcal{M}_T -1} 
\sum_{k=2}^\infty \frac{(-(\beta+\mu)( T_{l+1}- T_l))^k}{k!}\biggr).\notag
\end{align}
Since $\sum_{k=2}^\infty \frac{(-x)^k}{k!}\leq \frac{x^2}{2}$ for $x\geq 0$, this expression is bounded from above by
\begin{align}
 & \frac{\alpha}{\beta+\mu}\E(S) \E^*\biggl(\sum_{l=r(J_T)}^{\mathcal{M}_T 
-1} (\beta+\mu)^2 ( T_{l+1}-  T_l)^2\biggr) \biggr).        \label{la1}
 \end{align}
For ${T\geq(\frac{2}{\lambda-\mu}\log(2)\vee\frac{2(\log(4 
\lambda)-\log(\lambda-\mu))}{\lambda+\mu}})$, Lemma \ref{nl} implies that 
\eqref{la1} and hence also the second summand of the right-hand side of \eqref{k10} is 
smaller than or equal to
\begin{align*}
\alpha(\beta+\mu)  \E(S)  \Biggl( &\frac{\mu}{\lambda}\frac{T^2}{4} 
e^{-(\lambda-\mu)T} +
\frac{60 \lambda^3(\lambda+\mu)}{(\lambda-\mu)^4} 
T^2 e^{-(\lambda+\mu)T}+ T^2 e^{-\frac{1}{2}\lambda 
T}
\\&+\frac{2(\lambda-\mu)}{\lambda}\biggl(\log\biggl(\frac{\lambda
} {\lambda-\mu} \biggr)+(\lambda-\mu)T\biggr)T^2e^{-(\lambda-\mu)T} \notag
 \\
 &+  \biggl(\frac 3 4 T^2+\frac{T}{2(\lambda+\mu)}\biggr) \biggl( 
1+2\log\biggl(\frac{\lambda}{\lambda-\mu}\biggr)+ (\lambda-\mu) T\biggr) 
e^{-\frac{1}{4} (\lambda-\mu)T}\Biggr).\end{align*}

For the third summand of the right-hand side of \eqref{k10}, we obviously obtain 
the same upper bound except that the factor $\beta+\mu$ is replaced by $\beta$.
\\

We may conclude that for ${T\geq(\frac{2}{\lambda-\mu}\log(2)\vee\frac{2(\log(4 
\lambda)-\log(\lambda-\mu))}{\lambda+\mu}})$, the expression 
\eqref{kk3} is smaller than or equal to
\begin{align*}
& \sqrt{6}\alpha\frac{\lambda}{\lambda-\mu} \E(S) T 
\Biggl(\biggl(1+2\log\biggl(\frac{\lambda}{\lambda-\mu}\biggr)+(\lambda-\mu) T 
\biggr) e^{-\frac{1}{6}(\lambda-\mu)T}+  e^{-(\lambda-\mu)T} \Biggr)  
\\
&+ 2\alpha \E(S)\Biggl(T  e^{-\frac{1}{2}\lambda 
T}+\frac{2(\lambda-\mu)}{\lambda}\biggl(\log\biggl(\frac{\lambda
} {\lambda-\mu} 
\biggr)+(\lambda-\mu)T\biggr)T e^{-(\lambda-\mu)T}\Biggr)
\\
&+   \alpha(2\beta+\mu)  \E(S)  \Biggl( \frac{\mu}{\lambda}\frac{T^2}{4} 
e^{-(\lambda-\mu)T} + 
\frac{60 \lambda^3(\lambda+\mu)}{(\lambda-\mu)^4} 
T^2e^{-(\lambda+\mu)T}+ T^2 e^{-\frac{1}{2}\lambda 
T}
\\&\hphantom{+   \alpha(2\beta+\mu)  \E(S)  
\Biggl(}+\frac{2(\lambda-\mu)}{\lambda}\biggl(\log\biggl(\frac{\lambda
} {\lambda-\mu} \biggr)+(\lambda-\mu)T\biggr)T^2e^{-(\lambda-\mu)T} \notag
 \\
 &\hphantom{+   \alpha(2\beta+\mu)  \E(S)  
\Biggl(}+  \biggl(\frac 3 4 T^2+\frac{T}{2(\lambda+\mu)}\biggr) \biggl( 
1+2\log\biggl(\frac{\lambda}{\lambda-\mu}\biggr)+ (\lambda-\mu) T\biggr) 
e^{-\frac{1}{4} (\lambda-\mu)T}\Biggr).
\end{align*}

\paragraph{Upper bound for (\ref{kk7})}
Recall that we arranged $S_{J_T}=S_{J_\infty}$. Since the sum in \eqref{kk7} telescopes, we obtain
\begin{align}
  &\E^* \biggl(\biggl|\frac \alpha {\beta+\mu} (\E(S)+S_{J_T}) 
\sum\limits_{l=r(J_T)}^{\mathcal{M}_T -1} (e^{-(\beta+\mu)(T-  
T_{l+1})}-e^{-(\beta+\mu)(T-  T_l)}) \notag
 \\[-1mm]
 &\hphantom{\E^* \biggl(\biggl|}- \frac \alpha {\beta+\mu} (\E(S)+S_{J_\infty}) 
(1-e^{-(\beta+\mu)  A_{J_\infty}}) \biggr| \biggr) \notag
 \\[1.5mm]
 &=  \E^* \biggl(\biggl|\frac \alpha {\beta+\mu} 
(\E(S)+S_{J_\infty})\biggl(e^{-(\beta+\mu)(T-  
T_{\mathcal{M}_T})}-e^{-(\beta+\mu)(T-  T_{r(J_T)})} -  (1-e^{-(\beta+\mu)  
A_{J_\infty}})\biggr) \biggr| \biggr) \notag
 \\[0mm]
 & \leq \frac {2\alpha} {\beta+\mu} \E(S) \biggl(\E^*\biggl(1- 
e^{-(\beta+\mu)(T-T_{\mathcal{M}_T })}  \biggr)   +  
\E^*\biggl(\Bigl|e^{-(\beta+\mu)A_{J_T}(T)}-e^{-(\beta+\mu) 
A_{J_\infty}}\Bigr| 
\biggr)  \biggr), \label{t12}
\end{align}
where the last inequality holds since $S_{J_\infty}$ is independent of all other 
random variables appearing in \eqref{t12}.

By using Lemma \ref{nn1} again, the first 
conditional expectation on the right-hand side 
of \eqref{t12} can be bounded from above by 
\begin{align*}
  \frac{2}{\lambda}\biggl(\lambda-\mu+(\beta+\mu) 
\biggl(\log\biggl(\frac{\lambda}{\lambda-\mu}\biggr) 
+(\lambda-\mu)T\biggr)\biggr)e^{-(\lambda-\mu)T}
\end{align*}
if $T\geq \frac{1}{\lambda-\mu}\log(2)$.

In order to find an upper bound for the second conditional expectation on the right-hand side of \eqref{t12}, we apply a recent result from \cite{age} about the age distribution in a linear birth and death process. This result is stated in Corollary \ref{gage} in the appendix and implies that this conditional expectation is bounded from above by
\begin{align*}
  \Biggl(\frac{2\lambda}{\beta+\mu+\lambda} 
 + \frac{2(\lambda-\mu)}{\lambda} 
\biggl(\log\biggl(\frac{\lambda}{\lambda-\mu}
\biggr)+(\lambda-\mu)T\biggr)\Biggr)e^ {
-(\lambda-\mu)T}
\end{align*}
if $T\geq \frac{1}{\lambda-\mu}\log(2)$.

Altogether, we obtain that the right-hand side of \eqref{t12} is smaller than or 
equal to
\begin{align*}
 & \frac{4\alpha}{\beta+\mu} \E(S) \Biggl( \frac{\lambda-\mu}{\lambda}+ 
\frac{\beta+\lambda}{\lambda} 
\biggl(\log\biggl(\frac{\lambda}{\lambda-\mu}\biggr)+(\lambda-\mu)T\biggr)+ 
\frac{\lambda}{\beta+\mu+\lambda} \Biggr)e^{-(\lambda-\mu) T}
\end{align*}
if $T\geq \frac{1}{\lambda-\mu}\log(2)$.
\\

\paragraph{Conclusion}
Combining the upper bounds obtained for \eqref{kk4}--\eqref{kk7}, we have 
\begin{align}
&\!d_{TV}(\mathrm{MixPo}(\Lambda^*_T),\mathrm{MixPo}(\M^*)) \notag\\[1mm]
&\leq \frac{4\alpha \E(S)}{\beta \lambda}\biggl(\lambda-\mu+\beta 
\biggl(\log\biggl(\frac{\lambda}{\lambda-\mu}\biggr) 
+(\lambda-\mu)T\biggr)\biggr)e^{-(\lambda-\mu)T}\notag
  \\
  &+ \sqrt{2}{\alpha} \frac{\mu}{\lambda} \sigma_S  T e^{-(\lambda-\mu)T} + 
\sqrt{2}\alpha \sigma_S 
\biggl(1+2\log\biggl(\frac{\lambda}{\lambda-\mu}\biggr)+(\lambda-\mu)T\biggr) T 
e^{-\frac{1}{6}(\lambda-\mu)T}\notag
\\
&+\sqrt{2}\alpha \sigma_S T \Biggl( e^{-\frac{1}{2}\lambda 
T}+\frac{2(\lambda-\mu)}{\lambda}\biggl(\log\biggl(\frac{\lambda
} {\lambda-\mu} 
\biggr)+(\lambda-\mu)T\biggr)e^{-(\lambda-\mu)T}\Biggr)\notag
\\
&+ \sqrt{6}\alpha\frac{\lambda}{\lambda-\mu} \E(S) T 
\Biggl(\biggl(1+2\log\biggl(\frac{\lambda}{\lambda-\mu}\biggr)+(\lambda-\mu) T 
\biggr) e^{-\frac{1}{6}(\lambda-\mu)T}+  e^{-(\lambda-\mu)T} \Biggr)  \notag
\\
&+ 2\alpha \E(S)\Biggl(T  e^{-\frac{1}{2}\lambda 
T}+\frac{2(\lambda-\mu)}{\lambda}\biggl(\log\biggl(\frac{\lambda
} {\lambda-\mu} 
\biggr)+(\lambda-\mu)T\biggr)T e^{-(\lambda-\mu)T}\Biggr)\notag
 \\
 &+ \alpha(2\beta+\mu )  \E(S)  \Biggl( \biggl(\frac 3 4 
T^2+\frac{T}{2(\lambda+\mu)}\biggr)  \notag
\\&\hphantom{+ \alpha(2\beta+\mu )  \E(S)  \Biggl(}  \cdot \biggl(1+2\log\biggl(\frac{\lambda}{\lambda-\mu}\biggr)+ (\lambda-\mu) T\biggr) 
e^{-\frac{1}{4} (\lambda-\mu)T} + \frac{T^2}{4}\frac{\mu}{\lambda} 
e^{-(\lambda-\mu)T} \Biggr)\notag
\\
&+\alpha(2\beta+\mu )  
\E(S) T^2\Biggl(\frac{2(\lambda-\mu)}{\lambda}\biggl(\log\biggl(\frac{\lambda
} {\lambda-\mu} \biggr)+(\lambda-\mu)T\biggr)e^{-(\lambda-\mu)T} \notag
\\ &\hphantom{+\alpha(2\beta+\mu )  
\E(S) T^2\Biggl(}+  
\frac{60\lambda^3(\lambda+\mu)}{(\lambda-\mu)^4} e^{-(\lambda+\mu)T}\notag+  
e^{-\frac{1}{2}\lambda T}\Biggr)\notag
 \\
 &+ \frac{4\alpha}{\beta+\mu} \E(S) \Biggl( \frac{\lambda-\mu}{\lambda}+ 
\frac{\beta+\lambda}{\lambda} 
\biggl(\log\biggl(\frac{\lambda}{\lambda-\mu}\biggr)+(\lambda-\mu)T\biggr)+ 
\frac{\lambda}{\beta+\mu+\lambda} \Biggr)e^{-(\lambda-\mu) T}. \label{final}
\end{align}
for ${T\geq(\frac{2}{\lambda-\mu}\log(2)\vee\frac{2(\log(4 
\lambda)-\log(\lambda-\mu))}{\lambda+\mu}})$.
The upper bound claimed is now obtained by elementary computations.
$\hfill \square$

\renewcommand{\thesatz}{A.\arabic{satz}}
\renewcommand{\theequation}{A.\arabic{equation}}
\setcounter{satz}{0}
\setcounter{equation}{0}


%

\section*{Appendix A1: Linear birth and death processes and the age of a randomly picked 
individual}\label{sec:lbdp}
The node process $(Y_t)_{t\geq0}$ is a 
linear birth and death process with birth rate $\lambda$, death rate $\mu < 
\lambda$ and 
initial value one, i.e.\ $Y_0=1$. According to (8.15) and (8.46) in \cite{bai}, 
the 
one-dimensional distributions of such a process are given by the following 
probability mass functions:
\begin{align*}
 &p_0(t)=\mu \tilde p(t)
 \\&p_n(t)=(1-\mu \tilde p(t))(1-\lambda \tilde p(t))(\lambda \tilde 
p(t))^{n-1}, \ n \geq1,
\end{align*}
where $$\tilde p(t):=\frac{e^{(\lambda-\mu)t}-1}{\lambda e^{(\lambda-\mu)t}-\mu} 
=\frac{1}{\lambda} 
\frac{1-e^{-(\lambda-\mu)t}}{1-\frac{\mu}{\lambda}e^{-(\lambda-\mu)t}}.$$
\begin{bem}\label{ext}
Note that $p_0(t)$ is the probability that a linear birth and death process with 
initial value one goes extinct up to time $t$. Due to the branching property of 
a linear birth and death process, we have that $p_0(t)^m$ is the probability 
that a linear birth and death process with a general initial value $m$ goes 
extinct up to time $t$. By taking the limit $t \rightarrow \infty$, we obtain 
that the probability of eventual extinction is $(\mu/\lambda)^m$ if $\lambda>\mu$ (see e.g.\ 
(8.59) in \cite{bai}).
\end{bem}
By elementary computations using these probability mass functions, we obtain 
the 
following proposition (cf. (8.16), (8.17), (8.48) and (8.49) in \cite{bai}).
\begin{proposition}\label{mom}
 We have $$\E(Y_t)=e^{(\lambda-\mu)t} \text{ and } 
\Var(Y_t)=\frac{\lambda+\mu}{\lambda-\mu}(e^{2(\lambda-\mu)t}-e^{(\lambda-\mu)t}
). $$
\end{proposition}
From \cite{age} we know furthermore an expression and an upper bound for the conditional expectation of $1/Y_t$ given $Y_t > 0$ that is essentially of the anticipated order $e^{-(\lambda-\mu)t}$ as $t \to \infty$.
\begin{proposition}[{\protect \citealp[Lemma 3.1]{age}}]\label{l1b}
For any $t>0$, we have
 \begin{align*}E\biggl(\frac{1}{Y_t}\biggm|Y_t>0\biggr)&= \frac{\lambda-\mu}{\lambda e^{(\lambda-\mu)t} - \lambda} \log \biggl(\frac{\lambda e^{(\lambda-\mu)t} - \mu}{\lambda-\mu} \biggr) \\
 &\leq \frac{\lambda-\mu}{\lambda e^{(\lambda-\mu)t}-\lambda}\biggl(\log\biggl(\frac{\lambda}{\lambda-\mu}\biggr)+(\lambda-\mu)t\biggr).\end{align*}
\end{proposition} 
We may apply this bound in order to obtain an upper bound for $\E(Y_t^{-1/2} \mvert Y_t > 0)$. 
\begin{proposition}\label{ko2}
For $\lambda>\mu$ and $t\geq \frac{1}{\lambda-\mu}\log(2)$, we have
 $$\E\biggl(\frac{1}{\sqrt{Y_t}} \biggm|Y_t>0\biggr)\leq 
e^{-\frac{1}{2}(\lambda-\mu)t} 
\sqrt{\frac{2(\lambda-\mu)}{\lambda}\biggl(\log\biggl(\frac{\lambda}{\lambda-\mu
}\biggr)+(\lambda-\mu)t\biggr)} .$$
\end{proposition}
\begin{bew}
For $\lambda>\mu$ and $t\geq \frac{1}{\lambda-\mu}\log(2)$, we have
\begin{align*}
 \E\biggl(\frac{1}{\sqrt{Y_t}} \biggm|Y_t>0\biggr)&\leq 
\sqrt{\E\biggl(\frac{1}{{Y_t}}\biggm|Y_t>0\biggr)}
 \\
 &\leq \sqrt{\frac{\lambda-\mu}{\lambda e^{(\lambda-\mu)t} - \lambda} \biggl( \log\biggl(\frac{\lambda}{\lambda-\mu}\biggr)+(\lambda-\mu )t \biggr) }
 \\
 &\leq \sqrt{\frac{2(\lambda-\mu)}{\lambda e^{(\lambda-\mu)t}}\biggl(\log\biggl(\frac{\lambda}{\lambda-\mu}\biggr)+(\lambda-\mu)t \biggr)}
 \\
 &=e^{-\frac{1}{2}(\lambda-\mu)t} \sqrt{\frac{2(\lambda-\mu)}{\lambda}\biggl(\log\biggl(\frac{\lambda}{\lambda-\mu}\biggr)+(\lambda-\mu)t\biggr)} ,
\end{align*}
where the second line follows from Lemma 3.1 in \cite{age}.
\end{bew}

For the proof of our main theorem a finer analysis is required. Denote by $\mathfrak{B}_t$ 
and $\mathfrak{D}_t$ the numbers of births and deaths up to time $t$, respectively, where 
we set $\mathfrak{B}_0 = Y_0 = 1$ and $\mathfrak{D}_0 = 0$. By using a partial differential equation 
for the joint cumulant generating function of $\mathfrak{B}_t$ and $Y_t$ stated in 
\cite{cox}, we obtain the following formulae for the first and second joint 
moments.
\begin{proposition}[] \label{pr1}
We have
 \begin{align}
 &\E(\mathfrak{B}_t)=\frac{\lambda}{\lambda-\mu}e^{(\lambda-\mu)t}- 
\frac{\mu}{\lambda-\mu} \notag
 \\
&\Cov(\mathfrak{B}_t,Y_t)=\frac{\lambda(\lambda+\mu)}{(\lambda-\mu)^2}e^{2(\lambda-\mu)t}- 
\frac{2\lambda\mu}{\lambda-\mu} t 
e^{(\lambda-\mu)t}-\frac{\lambda^2}{(\lambda-\mu)^2}e^{(\lambda-\mu)t}\notag
 \\
 &\Var(\mathfrak{B}_t)= \frac{\lambda^2(\lambda+\mu)}{(\lambda-\mu)^3}e^{2(\lambda-\mu)t} 
-\frac{ 4\lambda^2\mu}{(\lambda-\mu)^2}t 
e^{(\lambda-\mu)t}+\biggl(\frac{2\lambda^2\mu}{(\lambda-\mu)^3}-\frac{
\lambda(\lambda+\mu)}{(\lambda-\mu)^2}\biggr) e^{(\lambda-\mu)t}. \notag
 \end{align}
\end{proposition}
\begin{bem}
 Note that $\E(Y_t)=\E(\mathfrak{B}_t-\mathfrak{D}_t)=\E(\mathfrak{B}_t)-\E(\mathfrak{D}_t)$ and 
${\frac{\E(\mathfrak{B}_t)-1}{\E(\mathfrak{D}_t)}=\frac{\lambda}{\mu}=\frac{\lambda}{\lambda+\mu}(\frac
{ \mu}{\lambda+\mu})^{-1}}$ is the ratio of the probabilities of a birth and a 
death at each event time. Furthermore, the sum
$\E(\mathfrak{B}_t)+\E(\mathfrak{D}_t)=\frac{\lambda+\mu}{\lambda-\mu}e^{(\lambda-\mu)T}-\frac{2 
\mu}{\lambda-\mu} $ is the expected number of events up to time $t$.
\end{bem}

In the rest of this section, we summarize the results about the age of an 
individual picked uniformly at random at a fixed time $T>0$ (given $Y_T > 0$). We 
briefly call the distribution of this age \emph{the age distribution of $(Y_t)_{t\geq0}$ 
at time $T$}. In 
the pure birth case, the age distribution has a simple form.
\begin{proposition}[Neuts and Resnick {\protect \cite[Theorem 1]{neu}}]\label{age}
Let $\mu=0$. The ages of the individuals at time $T$ that have been born after 
time zero are i.i.d.\ truncated exponentially distributed, more precisely they 
have distribution $\mathcal{L}(Z|Z \leq T)$, where $Z \sim 
\mathrm{Exp}(\lambda)$.
\end{proposition}

In the general case, we first state the conditional age distribution given the population size, which we know from \cite{age}. 
\begin{satz}[{\protect \citealp[Theorem~2.1]{age}}]\label{gagen}
Let $F_{y_T}$ denote the cumulative distribution function of the age of an individual 
picked uniformly at random at time $T$ given $Y_T=y_T$ for some $y_T>0$. Then $F_{y_T}$ is given by
 \begin{align*}
F_{y_T}(t)=& \frac{y_T-1}{y_T}\biggl(1-\frac{e^{-\lambda 
t}-e^{-(\lambda-\mu)T}e^{-\mu 
t}}{1-e^{-(\lambda-\mu)T}} \biggr)
\\ &+ \frac{1}{y_T}\biggl(\frac{\lambda 
(1-e^{-\mu t})-\mu(1-e^{-\lambda t}) }{\lambda -\mu}\1_{\lbrace t<T\rbrace} + \1_{\lbrace t=
T\rbrace}\biggr)
\end{align*}
for $t\in  [0,T]$.
\end{satz}
A simple computation yields then the unconditional age distribution (see \cite{age} for details).
\begin{kor}[cf.\ {\protect \citealp[Corollary~2.3]{age}}]\label{nk1}
The cumulative distribution function $F$ of the age of an individual picked 
uniformly at random at time $T$ is given by 
\begin{align*}
 F(t)= &\biggl(1-\frac{\lambda-\mu}{\lambda e^{(\lambda-\mu)T} - \lambda} \log 
\biggl(\frac{\lambda e^{(\lambda-\mu)T} - \mu}{\lambda-\mu} \biggr)\biggr) 
\biggl(1-\frac{e^{-\lambda t}-e^{-(\lambda-\mu)T}e^{-\mu 
t}}{1-e^{-(\lambda-\mu)T}} \biggr) 
  \\
  &+ \frac{\lambda-\mu}{\lambda e^{(\lambda-\mu)T} - \lambda} \log 
\biggl(\frac{\lambda e^{(\lambda-\mu)T} - \mu}{\lambda-\mu} \biggr) 
\biggl(\frac{\lambda 
(1-e^{-\mu t})-\mu(1-e^{-\lambda t}) }{\lambda -\mu}\1_{\lbrace t<T\rbrace} + 
\1_{\lbrace t=T\rbrace}\biggr)
\end{align*}
for $t\in  [0,T]$.
\end{kor}
The next corollary states that the age distribution converges exponentially fast 
to the $\mathrm{Exp}(\lambda)$ distribution in a certain sense for $\lambda>\mu$. It is an immediate consequence of Corollary \ref{nk1}.
\begin{kor}[{\protect \citealp[Corollary~2.4]{age}}]\label{gage}
Let $\lambda >\mu$, and let $A$ denote the age of an individual picked uniformly at random at time $T$. 
Then there exists a random variable $Z$ with 
${\mcl(Z|Y_T>0)=\mathrm{Exp}(\lambda)}$ such that
$$ \E\Bigl(\bigl| e^{-c A}- e^{-c Z} \bigr| \Bigm| Y_T>0\Bigr) 
\leq\frac{\lambda}{c+\lambda} \frac{1}{e^{(\lambda-\mu) T}-1} + 
\frac{\lambda-\mu}{\lambda 
e^{(\lambda-\mu)T}-\lambda}\biggl(\log\biggl(\frac{\lambda}{\lambda-\mu}
\biggr)+(\lambda-\mu)T\biggr)  $$
for any $c>0$.
\end{kor}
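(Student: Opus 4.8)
The plan is to read the conditional age distribution off Corollary~\ref{nk1}, recognise it as a two-component mixture, and build an explicit coupling of $A$ with an $\mathrm{Exp}(\lambda)$ variable $Z$ (all under the conditioning $Y_T>0$). Write $q:=e^{-(\lambda-\mu)T}$ and $p:=\E(1/Y_T\mid Y_T>0)$. For $\lambda\neq\mu$, Corollary~\ref{nk1} exhibits the distribution function $F$ of $A$ as $F=(1-p)G_1+p\,G_2$, where $G_1(t)=1-(e^{-\lambda t}-q e^{-\mu t})/(1-q)$ is a continuous law supported on $[0,T]$ (with $G_1\equiv1$ on $[T,\infty)$) and $G_2$ is the remaining ``founder'' component carrying an atom at $T$. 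First I would confirm that $G_1$ is a genuine distribution function: its density is nonnegative because $\lambda>\mu$ forces $\lambda e^{-\lambda t}\ge\mu q e^{-\mu t}$ for $t\le T$. Since the left-hand side depends only on the joint law of $(A,Z)$ given $Y_T>0$, it suffices to produce any coupling with the prescribed marginals.

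The decisive structural fact is that $G_1$ is stochastically dominated by $\mathrm{Exp}(\lambda)$. Setting $F_{\mathrm{Exp}}(t):=1-e^{-\lambda t}$ and $H:=G_1-F_{\mathrm{Exp}}$, a short computation gives $H(t)=\tfrac{q}{1-q}(e^{-\mu t}-e^{-\lambda t})\ge0$ for $t\in[0,T]$ and $H(t)=e^{-\lambda t}\ge0$ for $t\ge T$, so $G_1\ge F_{\mathrm{Exp}}$ pointwise. This lets me couple $A$ and $Z$ as follows: toss a coin landing ``founder'' with probability $p$; on ``non-founder'' draw $A\sim G_1$ and take $Z$ comonotone with it (so that $A\le Z$, using $G_1\ge F_{\mathrm{Exp}}$), and on ``founder'' draw $A\sim G_2$ together with an independent $Z\sim\mathrm{Exp}(\lambda)$. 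Since $Z$ is $\mathrm{Exp}(\lambda)$ in each branch, its overall law given $Y_T>0$ is $\mathrm{Exp}(\lambda)$, as required.

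Conditioning on the coin then yields $\E(|e^{-cA}-e^{-cZ}|\mid Y_T>0)=(1-p)\,\E_1+p\,\E_0$, with $\E_1,\E_0$ the contributions of the non-founder and founder branches. On the founder branch I bound crudely by $|e^{-cA}-e^{-cZ}|\le1$, so $p\,\E_0\le p$, which Proposition~\ref{l1b} dominates by exactly the second summand of the asserted bound. On the non-founder branch $A\le Z$ gives $|e^{-cA}-e^{-cZ}|=e^{-cA}-e^{-cZ}$, so $\E_1=\E_{G_1}(e^{-cA})-\tfrac{\lambda}{c+\lambda}$, and the integration-by-parts identity $\E(e^{-cX})=c\int_0^\infty F_X(t)e^{-ct}\,dt$ rewrites this difference of Laplace transforms as $c\int_0^\infty H(t)e^{-ct}\,dt$.

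The main obstacle is to bound this last integral cleanly by $\tfrac{\lambda}{c+\lambda}\tfrac{q}{1-q}$. The key estimate I would establish is the pointwise domination $H(t)\le\tfrac{q}{1-q}F_{\mathrm{Exp}}(t)$ on all of $[0,\infty)$: for $t\le T$ it reduces to $e^{-\mu t}\le1$, while for $t\ge T$ it amounts to $e^{-\lambda t}\le q$, which holds since $e^{-\lambda t}\le e^{-\lambda T}\le e^{-(\lambda-\mu)T}=q$. Feeding this into the integral gives $c\int_0^\infty H e^{-ct}\,dt\le\tfrac{q}{1-q}\,c\int_0^\infty F_{\mathrm{Exp}}(t)e^{-ct}\,dt=\tfrac{q}{1-q}\tfrac{\lambda}{c+\lambda}$, and using $\tfrac{q}{1-q}=1/(e^{(\lambda-\mu)T}-1)$ together with $(1-p)\le1$ the two branches combine to precisely the stated upper bound. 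The delicate point throughout is the tail region $t\ge T$, where $G_1$ has already saturated at $1$ while $F_{\mathrm{Exp}}$ has not; it is exactly the inequality $e^{-\lambda t}\le q$ there that keeps the clean factor $\lambda/(c+\lambda)$ intact.
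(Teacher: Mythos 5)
Your proposal is correct and takes essentially the same route as the paper's proof: the same founder/non-founder decomposition with founder weight $\E(1/Y_T\mid Y_T>0)$, the same comonotone coupling $Z=F_\infty^{-1}(F_*(A))$ on the non-founder branch (giving $A\le Z$ via the stochastic domination of Remark \ref{dom}), the crude bound by $1$ on the founder branch, and Proposition \ref{l1b} for the second summand. The only (cosmetic) difference is in the non-founder estimate: you bound the gap of Laplace transforms through the pointwise inequality $G_1(t)-(1-e^{-\lambda t})\le \frac{1}{e^{(\lambda-\mu)T}-1}(1-e^{-\lambda t})$ together with $\E(e^{-cX})=c\int_0^\infty F_X(t)e^{-ct}\,dt$, whereas the paper evaluates $\int_0^T e^{-ct}f_*(t)\,dt$ directly; both give exactly $\frac{\lambda}{c+\lambda}\frac{1}{e^{(\lambda-\mu)T}-1}$.
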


Another random quantity we need to control for our main bound is, at any fixed 
time $T$, the time since the last event has occurred. 
Let $0=T_1< T_2 < \ldots$ be the event times of $(Y_t)_{t\geq0}$.
Since $\mathfrak{B}_T+\mathfrak{D}_T$ is the number of events up to time $T$, the random 
variable $T-T_{\mathfrak{B}_T+\mathfrak{D}_T}$ describes the quantity we are interested in.
The following result states that the ${\mathrm{Exp}((Y_T-1)\lambda})$ 
distribution is a stochastic upper bound given $Y_T$ on $\{Y_T > 1\}$ and follows from Theorem \ref{gagen} above.
\begin{satz}[]\label{thm2}
 Given $Y_T=y_T$, the distribution of $T-T_{\mathfrak{B}_T+\mathfrak{D}_T}$ is stochastically dominated by 
the distribution with cumulative distribution function
  $$G(t)=\1_{\lbrace y_T>1 \rbrace } (1-e^{-(y_T-1)\lambda t}) + 
\1_{\lbrace y_T \leq 1 \rbrace } \1_{\lbrace t\geq T\rbrace}.$$
\end{satz}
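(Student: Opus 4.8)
The plan is to make rigorous the bound sketched in the paragraph preceding the statement: conditionally on $Y_T=y_T$, the elapsed time $T-T_{B_T+D_T}$ since the last event never exceeds the smallest age among the first $y_T-1$ individuals, after which one controls that minimum using the distributional facts already established in Subsection~\ref{age2}. I would first dispose of the degenerate case $y_T\leq 1$ (i.e.\ $y_T=1$ under the conditioning $Y_T>0$). Here the assertion reduces to $T-T_{B_T+D_T}\leq T$, which holds deterministically since the event times satisfy $T_{B_T+D_T}\geq T_1=0$; this is exactly stochastic domination by the point mass at $T$, whose cumulative distribution function is $\1_{\lbrace t\geq T\rbrace}$.

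The heart of the argument is the case $y_T\geq 2$. Recalling the left-to-right ordering of the individuals alive at time $T$ from Subsection~\ref{age2}, the ages of the first $y_T-1$ of them are $V_{I_1},\ldots,V_{I_{y_T-1}}$. The crucial observation is that the birth time $T-V_{I_j}$ of each such individual is an event time of the process, hence does not exceed the time $T_{B_T+D_T}$ of the most recent event; taking the latest of these births gives $T-\min_{1\leq j\leq y_T-1} V_{I_j}\leq T_{B_T+D_T}$, that is, $T-T_{B_T+D_T}\leq \min_{1\leq j\leq y_T-1} V_{I_j}$ almost surely on $\lbrace Y_T=y_T\rbrace$. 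I would isolate this inequality as the key geometric step.

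It then remains to bound the conditional law of the minimum. By the analysis in Subsection~\ref{age2}, conditionally on $Y_T=y_T$ the ages $V_{I_1},\ldots,V_{I_{y_T-1}}$ are i.i.d., and by Remark~\ref{dom} each is stochastically dominated by the $\mathrm{Exp}(\lambda)$ distribution, so $\P(V_{I_j}>t\mid Y_T=y_T)\leq e^{-\lambda t}$. Independence converts this pointwise tail bound into a bound on the minimum, $\P(\min_{j} V_{I_j}>t\mid Y_T=y_T)=\prod_{j=1}^{y_T-1}\P(V_{I_j}>t\mid Y_T=y_T)\leq e^{-(y_T-1)\lambda t}$, which is precisely the tail of the $\mathrm{Exp}((y_T-1)\lambda)$ distribution. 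Combining with the almost-sure inequality of the previous paragraph yields $\P(T-T_{B_T+D_T}>t\mid Y_T=y_T)\leq e^{-(y_T-1)\lambda t}$, i.e.\ domination by the cumulative distribution function $1-e^{-(y_T-1)\lambda t}$; assembling the two cases produces $G$.

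As for the main obstacle, the distributional inputs (i.i.d.\ structure of the first $y_T-1$ ages and their stochastic domination by $\mathrm{Exp}(\lambda)$) are already supplied by Subsection~\ref{age2} and Remark~\ref{dom}, and the passage from the individual tails to the tail of the minimum is immediate from independence. The only genuinely new point, and hence the step I would treat most carefully, is the geometric inequality $T-T_{B_T+D_T}\leq \min_{j} V_{I_j}$: one must argue cleanly that every birth among the selected $y_T-1$ individuals is counted among the events enumerated by $B_T+D_T$, so that the latest such birth time is bounded by $T_{B_T+D_T}$. I would also verify the edge conventions here, in particular the role of $T_1=0$ and the deliberate exclusion of the initial individual from the $y_T-1$ counted individuals, to confirm that the inequality is valid on all of $\lbrace Y_T=y_T\rbrace$.
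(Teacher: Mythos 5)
Your proposal is correct and follows essentially the same route as the paper, whose proof is exactly the paragraph preceding the statement: bound $T-T_{B_T+D_T}$ by the minimum of the ages of the first $y_T-1$ individuals, invoke the conditional i.i.d.\ structure and the stochastic domination by $\mathrm{Exp}(\lambda)$ from Remark~\ref{dom}, and use that the minimum of $y_T-1$ independent $\mathrm{Exp}(\lambda)$ variables is $\mathrm{Exp}((y_T-1)\lambda)$. Your write-up merely makes explicit the details the paper leaves implicit (the case $y_T\leq 1$ and the fact that each of these birth times is an event time, hence at most $T_{B_T+D_T}$), which is a faithful completion rather than a different argument.
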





\section*{Appendix A2: Lemmas for the proof of Theorem~\ref{thm}}\label{A2}

\subsection*{Some lemmas of general interest}
Recall that we set $\E^*(\pkt)=\E(\pkt|Y_T>0)$ and $\P^*(\pkt)=\P(\pkt|Y_T>0)$.
We start with a number of results about the linear birth and death process 
$(Y_t)_{t\geq0}$ that could well be useful in other situations. We first compute the extinction 
probability given the process has survived up to time $T$. We write 
${Y_\infty = \lim\limits_{t \rightarrow \infty} Y_t\in \lbrace 0,\infty\rbrace}$ a.s.
\begin{lemma} \label{austerben2}
For the conditioned extinction probability given $Y_T>0$, we have 
 \begin{align*}
  \P^*(Y_\infty =0) &= \frac{\mu}{\lambda}e^{-(\lambda-\mu)T}. 
 \end{align*}
\end{lemma}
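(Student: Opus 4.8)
The plan is to express the conditional probability as a ratio and reduce everything to the explicit one-dimensional marginals $p_n(T)$ from Subsection~\ref{popgra} together with the branching property recalled in Remark~\ref{ext}. By definition,
$$\P(Y_\infty = 0 \mid Y_T > 0) = \frac{\P(Y_\infty = 0,\, Y_T > 0)}{\P(Y_T > 0)},$$
and the denominator is immediate: $\P(Y_T > 0) = 1 - p_0(T) = 1 - \mu\tilde p(T)$, since $p_0(T)$ is exactly the probability of extinction by time $T$.

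For the numerator I would condition on the value of $Y_T$. Given $Y_T = n$ with $n \geq 1$, the strong Markov property at the deterministic time $T$ together with the branching property shows that the future of the process is distributed as a superposition of $n$ independent copies of the linear birth and death process started from a single individual. Eventual extinction then requires each of these $n$ subpopulations to die out, and by Remark~\ref{ext} each does so with probability $\mu/\lambda$; hence the conditional extinction probability given $Y_T = n$ equals $(\mu/\lambda)^n$. This yields
$$\P(Y_\infty = 0,\, Y_T > 0) = \sum_{n=1}^\infty p_n(T) \left(\frac{\mu}{\lambda}\right)^{\!n}.$$

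The remaining work is to evaluate this series. Substituting the explicit form $p_n(T) = (1 - \mu\tilde p(T))(1 - \lambda\tilde p(T))(\lambda\tilde p(T))^{n-1}$ turns the sum into a geometric series with ratio $\mu\tilde p(T)$, which is strictly less than $1$ because $\lambda > \mu$, guaranteeing convergence. Summing gives $\P(Y_\infty = 0,\, Y_T > 0) = \frac{\mu}{\lambda}(1 - \lambda\tilde p(T))$. Dividing by $1 - \mu\tilde p(T)$ and writing $E := e^{(\lambda-\mu)T}$, the factors $1 - \lambda\tilde p(T) = (\lambda-\mu)/(\lambda E - \mu)$ and $1 - \mu\tilde p(T) = (\lambda-\mu)E/(\lambda E - \mu)$ simplify neatly, leaving exactly $(\mu/\lambda)e^{-(\lambda-\mu)T}$, as claimed.

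The only genuinely delicate step is the conditional extinction identity $\P(Y_\infty = 0 \mid Y_T = n) = (\mu/\lambda)^n$, where care is needed to invoke the Markov property at $T$ and the independence of the $n$ descendant subtrees, each extinct with probability $\mu/\lambda$ (the $t \to \infty$ limit of $p_0(t)$). Once this is in place, the rest is the routine geometric-series computation and algebraic cancellation sketched above.
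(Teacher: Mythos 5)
Your proof is correct and follows essentially the same route as the paper: both use the branching property at time $T$ (Remark~\ref{ext}) to reduce the problem to evaluating $\E\bigl((\mu/\lambda)^{Y_T}\mid Y_T>0\bigr)$. The only difference is computational --- you sum the geometric series $\sum_{n\geq 1} p_n(T)(\mu/\lambda)^n$ directly from the explicit marginals of Subsection~\ref{popgra}, whereas the paper gets the same quantity by invoking the probability generating function identity $\E\bigl((\mu/\lambda)^{Y_T}\bigr)=\mu/\lambda$ and subtracting the $\{Y_T=0\}$ contribution; both yield the stated equality.
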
\vspace{1mm}

\begin{bew} 
 By conditioning on the population size, we obtain
\begin{align}
 \P^*(Y_\infty =0) &= \E^*(\P(Y_\infty =0| Y_T))=\E^*\biggl( 
\biggl(\frac{\mu}{\lambda}\biggr)^{Y_T}\biggr)\label{aussterben1}
\end{align}
since $({\mu}/{\lambda})^{m}$ is the extinction probability of a linear birth and death process with birth rate $\lambda$, death rate $\mu$ and initial value $m$ (see Remark \ref{ext}). 

On the one hand, we have
\begin{align*}
 \E\biggl(\biggl( \frac{\mu}{\lambda}\biggr)^{Y_T}\biggr) &= \E\biggl( 
\biggl(\frac{\mu}{\lambda}\biggr)^{Y_T}\biggm| Y_T>0\biggr) \P(Y_T>0) + 
\E\biggl( \biggl(\frac{\mu}{\lambda}\biggr)^{Y_T}\biggm| Y_T=0\biggr) \P(Y_T=0)
 \\
 &= \E^*\biggl( \biggl(\frac{\mu}{\lambda}\biggr)^{Y_T}\biggr) 
(1-p_0(T))+p_0(T).
\end{align*}
On the other hand, we can make use of the known formula for the probability generating function of $Y_T$ (see III.5 in \cite{ath}) to obtain
\begin{align*}
 \E\biggl(\biggl( \frac{\mu}{\lambda}\biggr)^{Y_T}\biggr) =\frac{\mu}{\lambda}.
 \end{align*}
 This yields
 \begin{align*}
  \E^*\biggl( \biggl(\frac{\mu}{\lambda}\biggr)^{Y_T}\biggr) &= 
\biggl(\frac{\mu}{\lambda} - p_0(T)\biggr) (1-p_0(T))^{-1}
=\frac{\mu(\lambda-\mu)}{\lambda(\lambda e^{(\lambda-\mu)T}-\mu)} \frac{\lambda 
e^{(\lambda-\mu)T}-\mu}{(\lambda-\mu)e^{(\lambda-\mu)T}}\\&
\leq \frac{\mu}{\lambda} e^{-(\lambda-\mu)}.
 \end{align*}
\end{bew}
For the probability of $Y_T=1$ conditioned on $Y_T>0$, we have the following lemma.
\begin{lemma}[used for Lemma \ref{nn1}] \label{nnl}
We have for $T\geq \frac{1}{\lambda-\mu}\log(2)$
\begin{align*}
 \P^*(Y_T=1) \leq \frac{2(\lambda-\mu)}{\lambda} e^{-(\lambda-\mu)T}.
\end{align*}
\end{lemma}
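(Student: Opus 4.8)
The plan is to reduce the conditional probability to a single explicit expression via the probability mass function from Subsection~\ref{popgra}, and then to bound the resulting denominator crudely using the hypothesis on $T$. First I would write
$$\P(Y_T=1\mid Y_T>0)=\frac{p_1(T)}{1-p_0(T)},$$
and recall from the start of Section~\ref{popgr} that $p_1(t)=(1-\mu\tilde p(t))(1-\lambda\tilde p(t))$ while $1-p_0(t)=1-\mu\tilde p(t)$, where $\tilde p(t)=\frac{e^{(\lambda-\mu)t}-1}{\lambda e^{(\lambda-\mu)t}-\mu}$. The common factor $1-\mu\tilde p(t)$ cancels, so the conditional probability collapses to $1-\lambda\tilde p(T)$.

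Next I would substitute the closed form of $\tilde p$ and simplify. A direct computation gives
$$1-\lambda\tilde p(T)=\frac{(\lambda e^{(\lambda-\mu)T}-\mu)-(\lambda e^{(\lambda-\mu)T}-\lambda)}{\lambda e^{(\lambda-\mu)T}-\mu}=\frac{\lambda-\mu}{\lambda e^{(\lambda-\mu)T}-\mu},$$
which is the exact value of $\P(Y_T=1\mid Y_T>0)$. It remains only to compare this with the stated upper bound, which amounts to showing $\lambda e^{(\lambda-\mu)T}-\mu\geq\tfrac12\lambda e^{(\lambda-\mu)T}$, i.e.\ $\lambda e^{(\lambda-\mu)T}\geq 2\mu$.

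Finally I would invoke the hypothesis $T\geq\frac{1}{\lambda-\mu}\log(2)$, which gives $e^{(\lambda-\mu)T}\geq 2$, so that $\lambda e^{(\lambda-\mu)T}\geq 2\lambda>2\mu$ since $\lambda>\mu$ in this setting. Hence the denominator satisfies $\lambda e^{(\lambda-\mu)T}-\mu\geq\tfrac12\lambda e^{(\lambda-\mu)T}$, and therefore
$$\P(Y_T=1\mid Y_T>0)=\frac{\lambda-\mu}{\lambda e^{(\lambda-\mu)T}-\mu}\leq\frac{2(\lambda-\mu)}{\lambda e^{(\lambda-\mu)T}},$$
as claimed. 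This argument is essentially a one-line calculation once the cancellation is spotted; there is no real obstacle, and the only point requiring the hypothesis on $T$ is the crude lower bound on the denominator, so I would make sure to state explicitly where $e^{(\lambda-\mu)T}\geq 2$ and $\mu<\lambda$ enter.
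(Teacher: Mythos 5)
Your proposal is correct and follows exactly the paper's own argument: the paper likewise writes $\P(Y_T=1\mid Y_T>0)=p_1(T)/(1-p_0(T))=1-\lambda\tilde p(T)=\frac{\lambda-\mu}{\lambda e^{(\lambda-\mu)T}-\mu}$ and then uses $e^{(\lambda-\mu)T}\geq 2$ (together with $\mu<\lambda$) to pass to the stated bound. No differences worth noting.
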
\vspace{1mm}

\begin{bew}
 Using the probability mass functions $p_n$ and the function $\tilde p$ from Appendix \hyperref[sec:lbdp]{A1}, for 
$T\geq \frac{1}{\lambda-\mu}\log(2)$, we obtain 
 \begin{align*}
 \P^*(Y_T=1) = \frac{p_1(T)}{1-p_0(T)}= 1-\lambda \tilde p(T) = 
\frac{\lambda-\mu}{\lambda e^{(\lambda-\mu)T}-\mu} \leq 
\frac{2(\lambda-\mu)}{\lambda e^{(\lambda-\mu)T}}.
\end{align*}
\end{bew}

We next consider sub- and supermartingale properties of conditioned processes.
\begin{lemma}[used for Lemma~\ref{supermartingale}]\label{submartingale}\
\vspace{-1mm}
\begin{enumerate}[(i)]
 \item $(Y_t)_{t \geq 0}$ conditioned on $Y_\infty = 0$ is a supermartingale.
 \item $(Y_t)_{t \geq 0}$ conditioned on $Y_\infty > 0$ is a submartingale.
\end{enumerate}
\end{lemma}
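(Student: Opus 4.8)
The plan is to reduce both assertions to a single computation of the one-step conditional expectation of the conditioned process, and to carry that computation out through the probability generating function of the linear birth and death process (we are in the standing supercritical case $\lambda > \mu$). Write $q := \mu/\lambda$ and let $\mathcal{F}_s$ be the natural filtration. The starting point is that, by the Markov property and the extinction probability from Remark~\ref{ext}, $\P(Y_\infty = 0 \mid \mathcal{F}_s) = q^{Y_s}$; equivalently $(q^{Y_t})_{t \geq 0}$ is a bounded martingale converging to $\mathbf{1}_{\{Y_\infty = 0\}}$. Conditioning the whole path on $\{Y_\infty = 0\}$ (resp.\ $\{Y_\infty > 0\}$) yields a law that is absolutely continuous on each $\mathcal{F}_t$, so by the Bayes formula for conditional expectations I would write, for $t > s$, $\E(Y_t \mid \mathcal{F}_s, Y_\infty = 0) = \E(Y_t \mathbf{1}_{\{Y_\infty = 0\}} \mid \mathcal{F}_s)/q^{Y_s}$. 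Conditioning the numerator additionally on $\mathcal{F}_t$ and using $\P(Y_\infty = 0 \mid \mathcal{F}_t) = q^{Y_t}$ turns it into $\E(Y_t q^{Y_t} \mid Y_s)/q^{Y_s}$, so everything reduces to evaluating $\E(Y_t q^{Y_t} \mid Y_s = m)$ and $\E(Y_t \mid Y_s = m)$.

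Both quantities I would read off the generating function. With $G_u(z) := \E(z^{Y_u} \mid Y_0 = 1)$, the probability mass functions in Subsection~\ref{popgra} give $G_u(z) = \mu \tilde p(u) + (1 - \mu \tilde p(u))(1 - \lambda \tilde p(u)) z / (1 - \lambda \tilde p(u) z)$, and by the branching property $\E(z^{Y_t} \mid Y_s = m) = G_{t-s}(z)^m$. Direct substitution gives the two facts I need: $G_u(q) = q$ for every $u$ (the extinction probability is a fixed point), and $G_u'(q) = (1 - \lambda \tilde p(u))/(1 - \mu \tilde p(u))$, which lies in $[0,1]$ precisely because $\lambda > \mu$ and $\tilde p(u) \geq 0$. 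Since $\E(Y_t z^{Y_t} \mid Y_s = m) = z \, \tfrac{d}{dz} G_{t-s}(z)^m$, evaluating at $z = q$ and using $G_{t-s}(q) = q$ yields $\E(Y_t q^{Y_t} \mid Y_s = m) = m \, q^m \, G_{t-s}'(q)$. Dividing by $q^{Y_s} = q^m$ gives $\E(Y_t \mid \mathcal{F}_s, Y_\infty = 0) = Y_s \, G_{t-s}'(q) \leq Y_s$, which, together with the integrability bound $\E(Y_t \mid Y_\infty = 0) \leq \E(Y_t)/q < \infty$, establishes the supermartingale property~(i).

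For~(ii) I would avoid a second computation by exploiting that the \emph{unconditioned} process is a submartingale: by Proposition~\ref{mom}(i) and the branching property $\E(Y_t \mid \mathcal{F}_s) = Y_s e^{(\lambda-\mu)(t-s)} \geq Y_s$. Splitting on $\{Y_\infty = 0\}$ and $\{Y_\infty > 0\}$ and applying the Bayes formula in reverse gives the convex decomposition $\E(Y_t \mid \mathcal{F}_s) = q^{Y_s} \, \E(Y_t \mid \mathcal{F}_s, Y_\infty = 0) + (1 - q^{Y_s}) \, \E(Y_t \mid \mathcal{F}_s, Y_\infty > 0)$. Since the left-hand side is $\geq Y_s$, the first summand is $\leq q^{Y_s} Y_s$ by part~(i), and $1 - q^{Y_s} > 0$ on $\{Y_\infty > 0\}$ (where $Y_s \geq 1$), rearranging immediately yields $\E(Y_t \mid \mathcal{F}_s, Y_\infty > 0) \geq Y_s$, i.e.\ the submartingale property.

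The purely algebraic part is the generating-function manipulation; the two genuinely load-bearing points, which I would state with care, are the identities $G_u(q) = q$ and $G_u'(q) \leq 1$ (the latter being exactly where supercriticality $\lambda > \mu$ enters), and the justification of the Bayes-formula reduction, namely that conditioning on the \emph{tail} event $\{Y_\infty = 0\}$ leaves the conditional expectations computable through $q^{Y_t}$. I expect the main obstacle to be expository rather than technical: making the conditioning on a tail event rigorous (absolute continuity on each $\mathcal{F}_t$, integrability, and the clean reduction to $\E(Y_t q^{Y_t} \mid Y_s)$), after which no hard estimate remains.
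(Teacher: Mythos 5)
Your proposal is correct, and part (ii) coincides with the paper's argument almost verbatim: both decompose $\E(Y_t\mid\mathcal{F}_s)$ over $\{Y_\infty=0\}$ and $\{Y_\infty>0\}$, use that the unconditioned process is a submartingale, feed in part (i) for the extinction summand, and divide by $1-q^{Y_s}>0$. Part (i) is where you genuinely diverge. The paper identifies the process conditioned on extinction with a \emph{subcritical} linear birth and death process with the rates $\lambda$ and $\mu$ swapped (citing Lambert, via the embedded generation process), and then invokes the known supermartingale property of subcritical processes. You instead perform the $h$-transform computation directly: since $\P(Y_\infty=0\mid\mathcal{F}_t)=q^{Y_t}$ with $q=\mu/\lambda$, the Bayes formula reduces everything to $\E(Y_tq^{Y_t}\mid Y_s)/q^{Y_s}$, which the generating function evaluates as $Y_s\,G_{t-s}'(q)$ using the fixed-point identity $G_u(q)=q$; your factor $G_u'(q)=(1-\lambda\tilde p(u))/(1-\mu\tilde p(u))$ indeed simplifies to $e^{-(\lambda-\mu)u}$, consistent with the swapped-rates identification. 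Your route is more self-contained (no appeal to the duality result in the literature) and yields the exact conditional drift rather than just the inequality, at the cost of the measure-theoretic bookkeeping around conditioning on the tail event, which you correctly flag and which is handled by the absolute continuity of $\P(\cdot\mid Y_\infty=0)$ on each $\mathcal{F}_t$. The paper's route is shorter but leans on an external structural fact (and, incidentally, contains a slip: it calls the subcritical process a submartingale where supermartingale is meant). Both are valid; no gap.
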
\vspace{1mm}

\begin{bew1}\
\begin{enumerate}[(i)]
 \item Consider a subcritical linear birth and death process $(\tilde Y_t)_{t\geq0}$ with \emph{birth rate} $\mu$, \emph{death rate} $\lambda$ and initial value one. Then ($\tilde Y_t)_{t\geq0}$ has the same law as $(Y_t)_{t\geq0}$ conditioned 
on $Y_\infty =0$ (see e.g.\ page 78 in \cite{lam}). It is well-known that a subcritical linear birth and death process is a supermartingale, which yields the result.
\item 
Consider a process $(\hat Y_t)_{t\geq0}$ that has the law of $(Y_t)_{t\geq0}$ conditioned on $Y_\infty >0$. Note that $(\hat Y_t)_{t\geq0}$ inherits the Markov property from $(Y_t)_{t\geq0}$. Furthermore, it is well-known that $(Y_t)_{t\geq0}$ is a submartingale. Thus we obtain for $t>s\geq0$ and $y_s \in \en$
\begin{align*}
\E(\hat Y_t|\hat Y_s=y_s)&= \E(Y_t|Y_s=y_s,Y_\infty>0)
\\
&=\frac{\E(\1_{\{Y_\infty>0\}} Y_t|Y_s=y_s)}{\P(Y_\infty>0|Y_s=y_s)}
\\
&=\frac{\E(Y_t|Y_s=y_s) - \E(\1_{\{Y_\infty=0\}} Y_t|Y_s=y_s)}{\P(Y_\infty>0|Y_s=y_s)}
\\
&\geq \frac{y_s- \P(Y_\infty=0|Y_s=y_s)\E( \tilde Y_t|\tilde Y_s=y_s)}{\P(Y_\infty>0|Y_s=y_s)}
\\
&\geq \frac{y_s- \P(Y_\infty=0|Y_s=y_s) y_s}{\P(Y_\infty>0|Y_s=y_s)}
\\
&=y_s,
\end{align*}
where $(\tilde Y_t)_{t\geq0}$ is the supermartingale from (i), which also inherits the Markov property from $( Y_t)_{t\geq0}$.
Thus $(Y_{t})_{t\geq 0}$ conditioned on ultimate survival is a submartingale.$\hfill\square$
\end{enumerate}
\end{bew1}

The following is a simple consequence.
\begin{lemma}[used for Lemma~\ref{hl1}]\label{supermartingale}
 $(Y_{t}^{-1})_{k\in \en}$ conditioned on $Y_\infty > 0$ is a supermartingale.
\end{lemma}
\begin{bew}
 In general, for a submartingale $(Z_t)_{t\geq 0}$ with respect to a filtration $(\mathcal{F}_t)_{t\geq 0}$ with $Z_t\geq 1$ for all $t\geq0$, we have for $t>s\geq0$
$$\E\biggl( \frac 1 {Z_{t}}-\frac 1 {Z_s}\biggm| \mathcal{F}_s \biggr) = 
\E\biggl( \frac{Z_{s}-Z_{t}}{Z_s Z_{t}}\biggm|\mathcal{F}_s \biggr) \leq 
\E( {Z_{s}-Z_{t}}|\mathcal{F}_s ) \leq 0.$$
Thus $(Z_t^{-1})_{t\geq 0}$ is a supermartingale. Consequently, Lemma \ref{submartingale} implies that $(Y_{t}^{-1})_{t\geq 0}$ conditioned on ultimate survival is a supermartingale. 
\end{bew}
\vspace{1mm}

In order to cope with dependencies, we already introduced the random variable $\mathcal{K}(T)$ in Definition~\ref{def2}. For the same reason, we define the deterministic number $\kappa(T)$, which depends on $T$ in such a way that the probability for more than 
$\kappa(T)$ events up to time $T$ decreases exponentially in $T$ (see Lemma~\ref{ne}(ii) below). The quantity $\kappa(T)$ is solely used in Lemma~\ref{ne} and in the proof of Lemma \ref{nl} below. Essentially, we substitute the number of events $\mathfrak{B}_T+\mathfrak{D}_T$ up to time $T$ by 
$\kappa(T)$ since it is difficult to treat the dependencies between the number 
of events up to time $T$ and the event times. 
\begin{notat}\label{def2a}
 Let $\kappa(T):=\lfloor e^{\frac{3}{2}(\lambda+\mu)T} \rfloor$.
\end{notat} \vspace{1mm}
Lemma~\ref{ne} below states essentially that, for $T$ large, it is unlikely that the randomly picked individual was born before $\frac{T}{2}$ or that more than $e^{\frac{3}{2}(\lambda+\mu)T}$ events have occurred by time~$T$.

\begin{lemma}\label{ne}~ 
\vspace{-1mm}
\begin{enumerate}[(i)]
\item For the probability that fewer than $\mathcal{K}(T)$ events have occurred 
up to the birth time of the randomly picked node $J_T$ was born, we have 
\begin{align*}
 \P^*(r(J_T)\leq \mck(T)) &=\P^*\biggl(T_{r(J_T)}\leq \frac{T}{2}\biggr)
 \\
 &\leq e^{-\frac{1}{2}\lambda 
T}+\frac{\lambda-\mu}{\lambda 
e^{(\lambda-\mu)T}-\lambda}\biggl(\log\biggl(\frac{\lambda
} {\lambda-\mu} \biggr)+(\lambda-\mu)T\biggr).
 \end{align*}
 \item For $T\geq \frac{2(\log(4 \lambda)-\log(\lambda-\mu))}{\lambda+\mu}$, we have
 \begin{align*}
 \P^*(\kappa(T)\leq \mathcal{M}_T )\leq \frac{60
\lambda^3(\lambda+\mu)}{(\lambda-\mu)^4} e^{-(\lambda+\mu)T},
 \end{align*}
 where $\mathcal{M}_T=\mathfrak{B}_T+\mathfrak{D}_T$ is the number of events up to time $T$ as before.
\end{enumerate}
\end{lemma}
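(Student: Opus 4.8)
The plan is to handle the two parts separately, reducing (i) to a tail estimate for the age distribution and (ii) to a second-moment bound on the number of events.

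For part (i), I would first translate the event into a statement about ages. By the definition $\mck(T)=\max\{k:T_k<T/2\}$ we have $\{r(J_T)\le\mck(T)\}=\{T_{r(J_T)}<T/2\}$, which is the first (claimed) equality, and since the age of the picked node is $A_{J_T}(T)=T-T_{r(J_T)}$ this event equals $\{A_{J_T}(T)>T/2\}$. So it suffices to bound the upper tail of the age at level $T/2$. I would condition on $Y_T$ and, using that given $Y_T=y_T$ the index $J_T$ is uniform on $\{1,\dots,y_T\}$, split according to whether the picked node is one of the first $Y_T-1$ individuals ($J_T<Y_T$) or the last one ($J_T=Y_T$). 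On $\{J_T<Y_T\}$ the age has cumulative distribution function $F_*$, and Remark~\ref{dom} gives $1-F_*(T/2)\le e^{-\lambda T/2}$; on $\{J_T=Y_T\}$ (which also absorbs the case $Y_T=1$) I simply bound the conditional tail by $1$. Averaging over the uniform choice of $J_T$ yields
$$\P(A_{J_T}(T)>T/2\mid Y_T>0)\le e^{-\frac{1}{2}\lambda T}+\E\Bigl(\tfrac{1}{Y_T}\Bigm| Y_T>0\Bigr),$$
and Proposition~\ref{l1b}(i) turns the last term into exactly the second summand of the asserted bound. There is no real obstacle here beyond spotting the age reinterpretation.

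For part (ii) I would first drop the conditioning via $\P(\,\cdot\mid Y_T>0)\le\P(\,\cdot)/\P(Y_T>0)$ and use $\P(Y_T>0)=1-p_0(T)\ge(\lambda-\mu)/\lambda$, which follows from the formula for $p_0$ in Subsection~\ref{popgra}. The key point, and the one place where care is needed, is that a first-moment Markov bound is \emph{not} strong enough: since $\E(\mathcal{M}_T)\asymp e^{(\lambda-\mu)T}$ while $\kappa(T)\asymp e^{\frac{3}{2}(\lambda+\mu)T}$, it would only give the rate $e^{-(\frac{1}{2}\lambda+\frac{5}{2}\mu)T}$, which is slower than $e^{-(\lambda+\mu)T}$ whenever $\mu<\lambda/3$. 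I would therefore pass to the second moment. Because deaths cannot exceed births ($Y_T=B_T-D_T\ge0$ gives $D_T\le B_T$), we have $\mathcal{M}_T=B_T+D_T\le 2B_T$, hence $\E(\mathcal{M}_T^2)\le4\E(B_T^2)$, and Markov applied to $\mathcal{M}_T^2$ gives $\P(\mathcal{M}_T\ge\kappa(T))\le4\E(B_T^2)/\kappa(T)^2$.

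The second moment $\E(B_T^2)=\Var(B_T)+(\E B_T)^2$ is available from Proposition~\ref{pr1} and \eqref{egeb}; its leading coefficient simplifies neatly to $\frac{2\lambda^3}{(\lambda-\mu)^3}$, so $\E(B_T^2)\le\frac{C\lambda^3}{(\lambda-\mu)^3}e^{2(\lambda-\mu)T}$ once the hypothesis on $T$ is used to dominate the positive lower-order terms. That same hypothesis $T\ge\frac{2(\log(4\lambda)-\log(\lambda-\mu))}{\lambda+\mu}$ yields $e^{\frac{3}{2}(\lambda+\mu)T}\ge\frac{64\lambda^3}{(\lambda-\mu)^3}$, so that $\kappa(T)=\lfloor e^{\frac{3}{2}(\lambda+\mu)T}\rfloor\ge\frac{63}{64}e^{\frac{3}{2}(\lambda+\mu)T}$ and the floor is harmless. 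Combining these, the exponent becomes $2(\lambda-\mu)-3(\lambda+\mu)=-(\lambda+5\mu)$, and since $\lambda+5\mu\ge\lambda+\mu$ I may bound $e^{-(\lambda+5\mu)T}\le e^{-(\lambda+\mu)T}$; dividing by $\P(Y_T>0)\ge(\lambda-\mu)/\lambda$ then produces a prefactor of the form $\mathrm{const}\cdot\frac{\lambda^4}{(\lambda-\mu)^4}$, comfortably below the stated $\frac{60\lambda^3(\lambda+\mu)}{(\lambda-\mu)^4}$. The hard part is thus not the structure of the argument (second moment plus Markov) but the bookkeeping of constants required to reach the explicit factor~$60$.
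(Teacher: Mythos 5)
Your proposal is correct and follows essentially the same route as the paper: part (i) is reduced, exactly as in the paper, to the tail of the age distribution via the stochastic domination in Remark~\ref{dom} together with Proposition~\ref{l1b}, and part (ii) rests on $\mathcal{M}_T\le 2B_T$, the moment formulas of Proposition~\ref{pr1}, and a second-moment concentration bound against $\kappa(T)$. The only (cosmetic) difference is in (ii), where you apply Markov's inequality to $\mathcal{M}_T^2$ with the uncentered moment $\E(B_T^2)$, while the paper uses Chebyshev's inequality with $\Var(B_T)$ after checking $\kappa(T)-2\E(B_T)\ge\frac{3}{8}e^{\frac{3}{2}(\lambda+\mu)T}$; both yield the exponent $-(\lambda+5\mu)T\le-(\lambda+\mu)T$ and a constant within the stated $60$.
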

\begin{bew}
\begin{enumerate}[(i)]
 \item We have
\begin{align*}
 \P^*(r(J_T)\leq \mck(T))& = \P^*( T_{r(J_T)}\leq 
T_{\mathcal{K}(T)})=1-\P^*\biggl( T-T_{r(J_T)}\leq 
\frac{T}{2}\biggr)
\end{align*}
Note that $\P^*( T-T_{r(J_T)}\leq T/2 )$ is given by Corollary \ref{nk1}. Thus the result follows from this corollary by elementary computation.


\item Using Proposition \ref{pr1}, we compute 
\begin{align}
 \kappa(T)-2\E(\mathfrak{B}_T) & \geq e^{\frac{3}{2}(\lambda+\mu)T} -1- 
\frac{2\lambda}{\lambda-\mu}e^{(\lambda-\mu)T}+\frac{2\mu}{\lambda-\mu}\notag
 \\
 &= e^{(\lambda-\mu)T} \biggl(e^{2\mu T} e^{\frac{1}{2}(\lambda+\mu)T}-\frac{2\lambda}{\lambda-\mu} \biggr) + \frac{3\mu-\lambda}{\lambda-\mu}.\label{ste}
\end{align}
Since $T\geq \frac{2(\log(4 \lambda)-\log(\lambda-\mu))}{\lambda+\mu}$, we have
\begin{align}
 \frac{1}{2} e^{2\mu T} e^{\frac{1}{2}(\lambda+\mu)T} \geq 
\frac{2\lambda}{\lambda-\mu}.\label{ste3}
\end{align}
Thus for $T\geq \frac{2(\log(4 \lambda)-\log(\lambda-\mu))}{\lambda+\mu}$, the 
right-hand side of \eqref{ste} is larger than or equal to
\begin{align}
 e^{(\lambda-\mu)T} \frac{1}{2} e^{2\mu T} 
e^{\frac{1}{2}(\lambda+\mu)T} + 
\frac{3\mu-\lambda}{\lambda-\mu}.\label{ste2}
\end{align}
Since for $T\geq \frac{2(\log(4 \lambda)-\log(\lambda-\mu))}{\lambda+\mu}$, we 
have
\begin{align}
 e^{(\lambda-\mu)T} &\geq 1+(\lambda-\mu)T\geq 
1+2\log\biggl(\frac{4\lambda}{\lambda+\mu}\br\geq1+2\log(4) \geq2,\label{ste6}
\end{align}
inequality \eqref{ste3} implies that 
for $T\geq \frac{2(\log(4 \lambda)-\log(\lambda-\mu))}{\lambda-\mu}$, the 
expression \eqref{ste2} is bounded from below by
\begin{align*}
 e^{(\lambda-\mu)T} \frac{3}{8} e^{2\mu T} 
e^{\frac{1}{2}(\lambda+\mu)T} + 
\frac{3\mu}{\lambda-\mu} \geq \frac{3}{8} 
e^{\frac{3}{2}(\lambda+\mu)T}.
\end{align*}
Thus for $T\geq \frac{2(\log(4 \lambda)-\log(\lambda-\mu))}{\lambda+\mu}$, we 
have 
\begin{align}
 \kappa(T)-2\E(\mathfrak{B}_T) \geq \frac{3}{8}  
e^{\frac{3}{2}(\lambda+\mu)T} >0. \label{ste4}
\end{align}
Consequently, we can apply Chebyshev's inequality and obtain
\begin{align}
 \P(\kappa(T)\leq \mathcal{M}_T | Y_T>0) &\leq \frac{\P(\kappa(T)\leq 
\mathcal{M}_T )}{\P(Y_T>0)} \notag \leq \frac{\P(\kappa(T) \leq 2 
\mathfrak{B}_T)}{\P(Y_T>0)} \notag
 \\
 &= \frac{1}{\P(Y_T>0)}\P(\kappa(T)-2 \E(\mathfrak{B}_T) \leq 2 \mathfrak{B}_T -2 \E(\mathfrak{B}_T) ) \notag
 \\
 &\leq \frac{1}{\P(Y_T>0)} \frac{4 \Var(\mathfrak{B}_T)}{(\kappa(T)-2\E(\mathfrak{B}_T))^2}, 
\label{ste5}
\end{align}
where $T\geq \frac{2(\log(4 \lambda)-\log(\lambda-\mu))}{\lambda+\mu}$. By 
$\P(Y_T>0)\geq \P(Y_\infty>0)=\frac{\lambda-\mu}{\lambda}$, Proposition 
\ref{pr1}  
and \eqref{ste4}, 
for $T\geq \frac{2(\log(4 \lambda)-\log(\lambda-\mu))}{\lambda+\mu}$, the 
right-hand side of \eqref{ste5} is smaller than or equal to
\begin{align}
 \frac{256}{9}\frac{ \lambda 
}{\lambda-\mu}\frac{\Var(\mathfrak{B}_T)}{e^{{3}(\lambda+\mu)T} }\leq \frac{30 \lambda 
}{\lambda-\mu} \biggl(\frac{\lambda^2(\lambda+\mu)}{(\lambda-\mu)^3} 
e^{-(\lambda+\mu)T}+\frac{2\lambda^2\mu}{(\lambda-\mu)^3}e^{-2(\lambda+\mu)T} 
\biggr).\label{ste7}
\end{align}
Since $e^{-(\lambda+\mu)T} \leq \frac{1}{2}$ by \eqref{ste6}, the right-hand 
side of \eqref{ste7} is smaller than or equal to
$$\frac{60 
\lambda^3(\lambda+\mu)}{(\lambda-\mu)^4} e^{-(\lambda+\mu)T}.$$
\vspace{-10mm}
\end{enumerate}
\end{bew}

\subsection*{Further lemmas}
In what follows we give some more specialized results that are used in the proof of Theorem~\ref{thm}.

The following lemma states that the time $T-T_{\mathcal{M}_T}$ since the last 
event becomes small quickly and is proved using Theorem \ref{thm2}.
\begin{lemma}\label{nn1}
 For $T\geq \frac{1}{\lambda-\mu}\log(2)$ and $c>0$, we have
 \begin{align}
   \E^*(1-e^{-c(T-T_{\mathcal{M}_T})})\leq 
\frac{2(\lambda-\mu)}{\lambda e^{(\lambda-\mu)T}}+  
\frac{2 c}{\lambda }\biggl( 
\log\biggl(\frac{\lambda}{\lambda-\mu}\biggr) +(\lambda-\mu)T\biggr)e^{-(\lambda-\mu)T}.\label{ste9}
 \end{align}
\end{lemma}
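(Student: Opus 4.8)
The plan is to condition on the population size $Y_T$ and to exploit the stochastic domination of the time since the last event established in Theorem \ref{thm2}, combined with the monotonicity of the map $x \mapsto 1 - e^{-cx}$. Since this map is increasing for $c>0$, Theorem \ref{thm2} gives, for every $y_T$, the bound $\E(1 - e^{-c(T - T_{\mathcal{M}_T})} \mid Y_T = y_T) \le \E(1 - e^{-cX})$, where $X$ has the dominating cumulative distribution function $G$ from that theorem. For $y_T \ge 2$ the variable $X$ is $\mathrm{Exp}((y_T - 1)\lambda)$ distributed, so a direct computation of the Laplace transform of the exponential law gives $\E(1 - e^{-cX}) = \frac{c}{(y_T - 1)\lambda + c} \le \frac{c}{(y_T - 1)\lambda}$; for $y_T = 1$ the dominating law is the Dirac mass at $T$, and the bound is simply $1 - e^{-cT} \le 1$.

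Next I would take the conditional expectation over $Y_T$ given $Y_T>0$ via $\E(\,\cdot\mid Y_T>0) = \E(\E(\,\cdot\mid Y_T)\mid Y_T>0)$, splitting the outer expectation according to whether $Y_T = 1$ or $Y_T > 1$. The first regime contributes at most $\P(Y_T = 1 \mid Y_T > 0)$, while the second contributes exactly $\frac{c}{\lambda}\,\E\bigl(\frac{1}{Y_T - 1}\1_{\{Y_T > 1\}} \bigm| Y_T > 0\bigr)$, which is the second term of the claimed bound. It then remains only to control $\P(Y_T = 1 \mid Y_T > 0)$, and for $T \ge \frac{1}{\lambda - \mu}\log(2)$ this is precisely the content of Lemma \ref{nnl}, which yields the first term $\frac{2(\lambda - \mu)}{\lambda e^{(\lambda - \mu)T}}$. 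Assembling the two pieces gives the inequality.

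There is no genuinely hard step here; the lemma is essentially an assembly of Theorem \ref{thm2} and Lemma \ref{nnl}, with the elementary exponential moment computation in between. The one point requiring a little care is the degenerate case $Y_T = 1$: there the time since the last event can be as large as $T$, so one cannot expect an exponentially small contribution arising (as in the generic case) from the minimum of the neighbouring ages, and one must instead pay the full cost $1$ and absorb it into the exponentially small conditional probability $\P(Y_T = 1 \mid Y_T > 0)$. Keeping the two regimes $\{Y_T=1\}$ and $\{Y_T>1\}$ separate, rather than attempting to treat $\frac{1}{Y_T-1}$ uniformly (which is undefined at $Y_T=1$), is exactly what makes the final bound clean.
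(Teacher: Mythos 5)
Your proposal is correct and follows essentially the same route as the paper's proof: both invoke the stochastic domination from Theorem \ref{thm2} together with the monotonicity of $x\mapsto 1-e^{-cx}$, compute $\E(1-e^{-cX})=\frac{c}{(y_T-1)\lambda+c}\le\frac{c}{(y_T-1)\lambda}$ on $\{Y_T>1\}$, bound the $\{Y_T=1\}$ contribution by $\P(Y_T=1\mid Y_T>0)$, and finish with Lemma \ref{nnl}. No substantive differences to report.
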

\begin{bew}
 Let $X$ have the cumulative 
distribution function
 $$G(t)=\1_{\lbrace Y_T>1 \rbrace } (1-e^{-(Y_T-1)(\lambda-\mu)t}) + 
\1_{\lbrace Y_T\leq 1 \rbrace } \1_{\lbrace t\geq T\rbrace}.$$
Conditionally on $Y_T$, we then have $T- T_{\mathcal{M}_T } \leq_{st} X$ by 
Theorem \ref{thm2}, which implies
\begin{align}
 \E^*(1-&e^{-c(T-T_{\mathcal{M}_T})})\leq \E^*(1-e^{-c X})\notag
 \\
 &=\E^*\biggl( (1-e^{-c T}) \P(X=T|Y_T)+ \E(1-e^{-c X}| X<T, Y_T) 
\P(X<T|Y_T) \biggr).\label{t7}
\end{align}
Since $\P(X=T|Y_T=1)=1$ and 
$\mathcal{L}(X|Y_T)=\mathrm{Exp}((Y_T-1)\lambda)$ on 
$\lbrace Y_T\geq 2 \rbrace$, we obtain that the right-hand side of 
\eqref{t7} is smaller than or equal to
\begin{align}
 \E^*\biggl(\1_{\lbrace Y_T=1 \rbrace} + \frac{c}{c+(Y_T-1)\lambda} 
\1_{\lbrace Y_T>1 \rbrace} \biggr) &\leq  \P^*(Y_T=1) + \frac{c}{\lambda} \E^*\biggl( \frac{1}{Y_T-1} \1_{\lbrace 
Y_T>1 
\rbrace} \biggr).\notag
\end{align}
For the conditional probability of $Y_T=1$, by Lemma \ref{nnl}, we have for 
$T\geq \frac{1}{\lambda-\mu}\log(2)$
\begin{align*}
 \P^*(Y_T=1) \leq \frac{2(\lambda-\mu)}{\lambda e^{(\lambda-\mu)T}}.
\end{align*}
 Furthermore, we have
 \begin{align}
  \E^*\biggl( \frac{1}{Y_T-1} \1_{\lbrace Y_T>1 \rbrace} \biggr) 
&=\frac{\lambda-\mu}{\lambda e^{(\lambda-\mu)T}-\mu} \sum\limits_{n=1}^\infty 
\frac{1}{n} \biggl(\frac{\lambda e^{(\lambda-\mu)T}-\lambda}{\lambda 
e^{(\lambda-\mu)T}-\mu}\biggr)^n \notag
\\
  &= \frac{\lambda-\mu}{\lambda e^{(\lambda-\mu)T} - \mu}\biggl(-\log\biggl(1-\frac{\lambda e^{(\lambda-\mu)T} -\lambda}{\lambda e^{(\lambda-\mu)T} -\mu} \biggr)\biggr) \notag
  \\
  &= \frac{\lambda-\mu}{\lambda e^{(\lambda-\mu)T} - \mu} \log \biggl(\frac{\lambda e^{(\lambda-\mu)T} - \mu}{\lambda-\mu} \biggr) \notag
  \\
  &\leq \frac{2}{ e^{(\lambda-\mu)T} } \biggl( \log\biggl(\frac{\lambda}{\lambda-\mu}\biggr)+(\lambda-\mu )T \biggr) \notag
 \end{align}
 for $T\geq \frac{1}{\lambda-\mu}\log(2)$, which completes the proof.
\end{bew}

\vspace{1mm}
The next result follows from Lemma \ref{supermartingale} and a well-known inequality for supermartingales.
\begin{lemma}\label{hl1}
For all $\delta, \gamma >0$, we have
 \begin{align*}
 \E\biggl( &\frac{1}{ \min \limits_{T/2\leq t }  
Y_{t}^\delta}\biggm| Y_\infty >0 \biggr)\leq e^{-\gamma(\lambda-\mu)T} + 
\E\biggl(\frac{1}{Y_{T/2}}\biggm| Y_\infty>0\biggr) 
e^{\frac{\gamma}{\delta}(\lambda-\mu)T} .
 \end{align*}
\end{lemma}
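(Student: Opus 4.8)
The plan is to rewrite the quantity of interest as the maximum of a nonnegative supermartingale and then apply a maximal inequality, using the supermartingale supplied by Corollary~\ref{supermartingale}. Since $x\mapsto x^\delta$ is increasing on $(0,\infty)$ for $\delta>0$, we have $\min_{t\geq T/2}Y_t^\delta=(\min_{t\geq T/2}Y_t)^\delta$, and therefore
$$\frac{1}{\min_{t\geq T/2}Y_t^\delta}=\max_{t\geq T/2}Y_t^{-\delta},$$
so it suffices to bound $\E(\max_{t\geq T/2}Y_t^{-\delta}\mid Y_\infty>0)$.

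First I would fix a threshold $c>0$ (to be chosen at the end) and split the expectation according to whether the running maximum $\max_{t\geq T/2}Y_t^{-1}$ stays below $c$ or not. On the event $\{\max_{t\geq T/2}Y_t^{-1}\leq c\}$ we have $Y_t^{-1}\leq c$ for all $t\geq T/2$, hence $\max_{t\geq T/2}Y_t^{-\delta}\leq c^\delta$. On the complementary event I would use that, conditioned on $Y_\infty>0$, the process never goes extinct, so $Y_t\geq 1$ and thus $Y_t^{-\delta}\leq 1$ for all $t$; the contribution of this event is therefore at most its probability $\P(\max_{t\geq T/2}Y_t^{-1}>c\mid Y_\infty>0)$.

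The key step is to control this probability. By Corollary~\ref{supermartingale}, $(Y_t^{-1})_{t\geq 0}$ conditioned on $Y_\infty>0$ is a nonnegative supermartingale; restricting to $[T/2,\infty)$ and invoking right-continuity of the trajectories, Doob's maximal inequality for nonnegative supermartingales gives
$$\P\left(\max_{t\geq T/2}Y_t^{-1}>c\biggm| Y_\infty>0\right)\leq \frac{1}{c}\,\E\left(Y_{T/2}^{-1}\biggm| Y_\infty>0\right).$$
Combining the two pieces yields $\E(\max_{t\geq T/2}Y_t^{-\delta}\mid Y_\infty>0)\leq c^\delta+c^{-1}\,\E(Y_{T/2}^{-1}\mid Y_\infty>0)$. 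Finally I would choose $c=e^{-\frac{\gamma}{\delta}(\lambda-\mu)T}$, so that $c^\delta=e^{-\gamma(\lambda-\mu)T}$ and $c^{-1}=e^{\frac{\gamma}{\delta}(\lambda-\mu)T}$, which reproduces exactly the claimed bound.

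The main obstacle is the rigorous application of the maximal inequality in continuous time under the conditioned law: one must check that conditioning on $\{Y_\infty>0\}$ together with the Markov property makes $(Y_{T/2+u}^{-1})_{u\geq 0}$ a genuine supermartingale, so the inequality may be started at time $T/2$ with initial value $Y_{T/2}^{-1}$, and that right-continuity of the paths justifies passing from the maximal inequality over a countable dense set of times to the supremum over the full continuous index set (the integer-valued, $\geq 1$ nature of $Y_t$ under the conditioning also ensures the infimum defining the $\min$ is genuinely attained). Everything else is elementary.
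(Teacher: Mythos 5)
Your proposal is correct and follows essentially the same route as the paper: split on the event $\{\max_{t\geq T/2}Y_t^{-1}\leq c\}$ with $c=e^{-\frac{\gamma}{\delta}(\lambda-\mu)T}$, bound the complementary contribution by its probability using $Y_t\geq 1$ under the conditioning, and control that probability via the supermartingale of Corollary~\ref{supermartingale} together with the maximal inequality for nonnegative supermartingales (the paper cites inequality (2) of Theorem~6.14 in \cite{yeh} for exactly this step). The only cosmetic difference is that the paper runs the argument conditionally on $Y_{\frac{T}{2}}$ and then integrates, whereas you apply the maximal inequality directly to the conditioned process started at $T/2$; both are valid.
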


\begin{bew}
 Writing $Z_{T/2} = \max_{T/2 \leq t} Y_t^{-1}$, we obtain 
\begin{align}
 \E\bigl( Z_{T/2}^{\delta} \bigm| Y_\infty >0 \bigr) &\leq \E\Bigl( \E\bigl(\1_{\lbrace Z_{T/2}^{\delta} \leq e^{-\gamma(\lambda-\mu) 
T}\rbrace} Z_{T/2}^{\delta} \bigm| 
Y_{T/2} \bigr)\Bigm| Y_\infty>0\Bigr)\notag
\\
&{\ }\ +\E\Bigl( \E\bigl(\1_{\lbrace Z_{T/2}^{\delta} > e^{-\gamma(\lambda-\mu) T}\rbrace} 
Z_{T/2}^{\delta} \bigm| 
Y_{T/2} \bigr)\Bigm| Y_\infty>0\Bigr) \notag
\\[0.5mm]
& \leq e^{-\gamma(\lambda-\mu)T}  \notag+ \E\Bigl(\P\bigl( 
Z_{T/2} > 
e^{-\frac{\gamma}{\delta}(\lambda-\mu)T} \bigm| Y_{T/2} 
\bigr)\Bigm| Y_\infty>0\Bigr)\notag
\\[0.5mm]
&\leq e^{-\gamma(\lambda-\mu)T} +  
\E\bigl(Y_{T/2}^{-1}\bigm| Y_\infty>0\bigr) 
e^{\frac{\gamma}{\delta}(\lambda-\mu)T},\notag
 \end{align}
where the last line follows from Lemma \ref{supermartingale} and the the inequality (2) in Theorem 6.14 on page 99 in~\cite{yeh}.
\end{bew}
%

We use the following lemma to bound the conditional expectation on the right 
hand side of Lemma~\ref{hl1} from above.
\begin{lemma}\label{n3}
 We have for $T\geq \frac{2}{\lambda-\mu}\log(2)$
 \begin{align*}
\E\biggl(\frac{1}{Y_{T/2}}\biggm| Y_\infty >0 \biggr)\leq
2\biggl(\log\biggl(\frac{\lambda}{\lambda-\mu}\biggr)+\frac{1}{2}(\lambda-\mu) 
T\biggr) e^{-\frac{1}{2}(\lambda-\mu)T}.
\end{align*} 
\end{lemma}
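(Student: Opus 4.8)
The plan is to reduce the conditioning on ultimate survival $\{Y_\infty>0\}$ to conditioning on mere survival up to time $T/2$, for which Proposition~\ref{l1b} already provides a sharp bound. The key observation is that $\{Y_\infty>0\}\subseteq\{Y_{T/2}>0\}$, so that $\1_{\{Y_\infty>0\}}\le\1_{\{Y_{T/2}>0\}}$ and hence, since $1/Y_{T/2}\ge 0$,
\begin{align*}
  \E\biggl(\frac{1}{Y_{T/2}}\biggm|Y_\infty>0\biggr)
  &=\frac{\E\bigl(\frac{1}{Y_{T/2}}\1_{\{Y_\infty>0\}}\bigr)}{\P(Y_\infty>0)}
   \le\frac{\P(Y_{T/2}>0)}{\P(Y_\infty>0)}\,
   \E\biggl(\frac{1}{Y_{T/2}}\biggm|Y_{T/2}>0\biggr).
\end{align*}
Everything then reduces to inserting known quantities and checking an elementary inequality.

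First I would record the two survival probabilities. From the formula for $p_0$ in Subsection~\ref{popgra} one gets $\P(Y_t>0)=1-p_0(t)=\frac{(\lambda-\mu)e^{(\lambda-\mu)t}}{\lambda e^{(\lambda-\mu)t}-\mu}$, while Remark~\ref{ext} gives $\P(Y_\infty>0)=(\lambda-\mu)/\lambda$; their quotient simplifies to $\frac{\lambda e^{(\lambda-\mu)T/2}}{\lambda e^{(\lambda-\mu)T/2}-\mu}$. Bounding $\E(\frac{1}{Y_{T/2}}\mid Y_{T/2}>0)$ by the upper bound in Proposition~\ref{l1b}(i) evaluated at $t=T/2$, and abbreviating $x:=e^{(\lambda-\mu)T/2}$ and $C:=\log\bigl(\frac{\lambda}{\lambda-\mu}\bigr)+\frac12(\lambda-\mu)T$, the right-hand side above becomes $\frac{x(\lambda-\mu)}{(\lambda x-\mu)(x-1)}\,C$. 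Since the asserted bound is exactly $2C\,e^{-\frac12(\lambda-\mu)T}=2C/x$ and $C>0$, it suffices to prove the purely algebraic inequality $\frac{x^2(\lambda-\mu)}{(\lambda x-\mu)(x-1)}\le 2$.

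Clearing denominators, this last inequality is equivalent to $(\lambda+\mu)x^2-2(\lambda+\mu)x+2\mu\ge 0$, i.e., after completing the square, to $(x-1)^2\ge\frac{\lambda-\mu}{\lambda+\mu}$. The hypothesis $T\ge\frac{2}{\lambda-\mu}\log 2$ forces $x=e^{(\lambda-\mu)T/2}\ge 2$, so $(x-1)^2\ge 1>\frac{\lambda-\mu}{\lambda+\mu}$, and the claim follows. The only genuinely delicate point is that the passage from $\{Y_\infty>0\}$ to $\{Y_{T/2}>0\}$ through the crude indicator inequality discards some mass, so I would want to confirm that the resulting constant and the threshold on $T$ still match the statement; the computation above shows that they do, with the factor $2$ being precisely what the condition $x\ge 2$ buys. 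Everything else is routine manipulation of the explicit one-dimensional distributions from Subsection~\ref{popgra}.
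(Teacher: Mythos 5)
Your proof is correct and follows essentially the same route as the paper: both reduce the conditioning on $\{Y_\infty>0\}$ to $\{Y_{T/2}>0\}$ via the indicator inequality and $\P(Y_\infty>0)=\frac{\lambda-\mu}{\lambda}$, then invoke Proposition~\ref{l1b} at time $T/2$ and finish with elementary algebra using $e^{\frac{1}{2}(\lambda-\mu)T}\geq 2$. The only difference is bookkeeping: the paper bounds $\P(Y_{T/2}>0)$ by $1$ immediately and reduces the final step to $\frac{1}{x-1}\leq\frac{2}{x}$, whereas you carry $\P(Y_{T/2}>0)$ along and verify a slightly sharper quadratic inequality, arriving at the same constant.
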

\begin{bew}
 For $T\geq \frac{2}{\lambda-\mu}\log(2)$, we obtain
\begin{align}
\E\bigl(Y_{T/2}^{-1} \bigm| Y_\infty >0 \bigr) 
&=\frac{\E\bigl(Y_{T/2}^{-1}\1_{\lbrace Y_\infty >0 
\rbrace}\bigr)}{\P(Y_\infty >0)} \leq \frac{\lambda}{\lambda-\mu}  \E\bigl(Y_{T/2}^{-1} 
\1_{\lbrace Y_{T/2}>0\rbrace}\bigr)\notag \\ &\leq \frac{\lambda}{\lambda-\mu} 
\E\bigl(Y_{T/2}^{-1}\bigm| 
Y_{T/2}>0\bigr) \notag
 \\
 &\leq \frac{\lambda}{\lambda e^{\frac{1}{2}(\lambda-\mu)  T}- \lambda 
}\biggl(\log\biggl(\frac{\lambda}{\lambda-\mu}\biggr)+\frac{1}{2}(\lambda-\mu)  
T\biggr) \notag
 \\
 &\leq \frac{1}{ \frac{1}{2} e^{\frac{1}{2}(\lambda-\mu)  T}+\frac{1}{2} 
e^{\frac{1}{2}(\lambda-\mu)  T}- 1 
}\biggl(\log\biggl(\frac{\lambda}{\lambda-\mu}\biggr)+\frac{1}{2}(\lambda-\mu)  
T\biggr) \notag
  \\
 &\leq 
2\biggl(\log\biggl(\frac{\lambda}{\lambda-\mu}\biggr)+\frac{1}{2}(\lambda-\mu) 
T\biggr) e^{-\frac{1}{2}(\lambda-\mu)T},\notag
\end{align}
where the second line follows from Proposition~\ref{l1b}.
\end{bew}

Recall that $R_{T_l,T}$ is the number of nodes that are alive at time $T_l$ and survive 
up to time~$T$. The following lemma gives us the first two conditional moments of $R_{ T_l,T}$.

\begin{lemma}[used for Lemma~\ref{nl1}]\label{no}
We have
 \begin{align*}
  \E(R_{ T_l,T}|Y_{T_l}, T_l, T_{l+1} )&=(Y_{T_l}-1) e^{-\mu(T-T_{l+1})},
  \\
  \E( R_{ T_l,T}^2 | Y_{T_l}, T_l, T_{l+1} )&= Y_{T_l} e^{-\mu(T-T_{l+1})} - 
Y_{T_l} e^{-2\mu(T-T_{l+1})} + Y_{T_l}^2 e^{-2\mu(T-T_{l+1})} 
 \\&\hspace{-5mm} - \P(Y_{T_{l+1}} =Y_{T_l} - 1| Y_{T_l}, T_l, T_{l+1}) \biggl(  2( 
Y_{T_l}-1) e^{-2\mu (T-T_{l+1})}+e^{-\mu(T-T_{l+1})} \biggr).
 \end{align*}
\end{lemma}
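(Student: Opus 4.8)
The plan is to represent $R_{T_l,T}$ as a sum of Bernoulli survival indicators and to read off both conditional moments from the single-node and pairwise survival probabilities. Write $R_{T_l,T}=\sum_{i=1}^{Y_{T_l}} X_i$, where $X_i = \1_{\{\text{node } i \text{ survives to } T\}}$ and the nodes $1,\dots,Y_{T_l}$ are those alive at time $T_l$. All computations are carried out conditionally on $\mathcal{G}:=\sigma(Y_{T_l},T_l,T_{l+1})$, and I abbreviate $q:=\P(Y_{T_{l+1}}=Y_{T_l}-1\mid\mathcal{G})$ and $p:=e^{-\mu(T-T_{l+1})}$.

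The key structural facts are the following. Since $T_l$ and $T_{l+1}$ are consecutive event times, conditionally on $\mathcal{G}$ none of the $Y_{T_l}$ nodes changes its status during $(T_l,T_{l+1})$, so all of them are present at $T_{l+1}-$; at $T_{l+1}$ exactly one event occurs, namely a death of one of these nodes with probability $q$ (and, by exchangeability of the individuals, the victim is uniform on $\{1,\dots,Y_{T_l}\}$) or a birth otherwise. After $T_{l+1}$ the process runs without any further conditioning, so each node alive at $T_{l+1}+$ survives to $T$ independently with probability $p$, its residual lifetime being $\mathrm{Exp}(\mu)$ by the memoryless per-capita death mechanism. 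Consequently a single node survives to $T$ with conditional probability $(1-q/Y_{T_l})\,p$ — it must avoid being the victim at $T_{l+1}$ and then avoid dying afterwards — and summing over $i$ gives the first moment by linearity. For the second moment I would use $R_{T_l,T}^2=\sum_i X_i+\sum_{i\neq j}X_iX_j$; for a fixed pair $i\neq j$ the probability that both survive is $(1-2q/Y_{T_l})\,p^2$, the factor $1-2q/Y_{T_l}$ being the chance that neither of the two is the victim at $T_{l+1}$ and the square reflecting their independent survival thereafter. Multiplying by the $Y_{T_l}(Y_{T_l}-1)$ ordered pairs, adding the first moment, and collecting the terms proportional to $q$ then yields the stated expression.

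The only genuine obstacle is to justify the two independence statements rigorously: that, conditionally on $\mathcal{G}$, the identity of the victim at $T_{l+1}$ is uniform and independent of the subsequent evolution, and that the survivals of distinct nodes past $T_{l+1}$ are conditionally independent and depend on $\mathcal{G}$ only through $T-T_{l+1}$. Both follow from the strong Markov property applied at the stopping time $T_{l+1}$ together with the observation that $\mathcal{G}$ carries no information about the trajectory strictly after $T_{l+1}$, while exchangeability of the individuals in a linear birth and death process gives the uniform choice of victim. Once these are in place, the remainder is the elementary bookkeeping of the two disjoint cases (birth versus death at $T_{l+1}$).
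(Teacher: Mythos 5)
Your argument is correct, and it takes a genuinely different route from the paper's. The paper conditions on the \emph{type} of the event at $T_{l+1}$ (birth with probability $p^+$, death with probability $p^-$), expresses $R_{T_l,T}$ through $R_{T_{l+1},T}$ in each case (subtracting the survival indicator of the newborn in the birth case), and then reads off the moments from the fact that, given the population at $T_{l+1}$, the number of survivors is binomial with success probability $e^{-\mu(T-T_{l+1})}$; the two cases are then recombined. You instead decompose $R_{T_l,T}=\sum_i X_i$ into per-node survival indicators and compute the marginal and pairwise survival probabilities $(1-q/Y_{T_l})p$ and $(1-2q/Y_{T_l})p^2$ directly, using exchangeability to make the victim of a death event uniform. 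Both rest on the same probabilistic facts (memorylessness/strong Markov at $T_{l+1}$ and conditional independence of residual lifetimes), but your pair-counting version avoids the separate variance computation and the case recombination, and expanding $Y_{T_l}(Y_{T_l}-1)(1-2q/Y_{T_l})p^2+(Y_{T_l}-q)p$ does reproduce the stated second-moment formula exactly. One caveat you should be aware of: your first moment comes out as $(Y_{T_l}-q)e^{-\mu(T-T_{l+1})}$ with $q=\P(Y_{T_{l+1}}=Y_{T_l}-1\mid Y_{T_l},T_l,T_{l+1})=\mu/(\lambda+\mu)$, not $(Y_{T_l}-1)e^{-\mu(T-T_{l+1})}$ as displayed in the lemma. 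This is not a defect of your argument: the paper's own proof likewise concludes with $Y_{T_l}e^{-\mu(T-T_{l+1})}-e^{-\mu(T-T_{l+1})}p^-$, so the displayed first-moment identity appears to contain a slip ($1$ in place of $p^-$); your derivation agrees with what is actually proved.
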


\begin{bew}
Firstly, we determine the conditional expectation of $R_{ T_l,T}$:
\begin{align}
 \E(R_{ T_l,T}|Y_{T_l}, T_l, T_{l+1} ) &= p^+  \E(R_{ T_l,T}|Y_{T_l}, T_l, 
T_{l+1} , Y_{T_{l+1}}=Y_{T_l}+1) \notag
 \\
 &\hphantom{=} + p^-  \E(R_{ T_l,T}|Y_{T_l}, T_l, T_{l+1} , 
Y_{T_{l+1}}=Y_{T_l}-1), 
\label{de}
\end{align}
where $p^+ :=\P(Y_{T_{l+1}}=Y_{T_l}+1| Y_{T_l}, T_l, T_{l+1})$ and 
$p^-:=\P(Y_{T_{l+1}}=Y_{T_l}-1| Y_{T_l}, T_l, T_{l+1})$.

With
\begin{align}
 \E(R_{ T_l,T}|Y_{T_l}, &T_l, T_{l+1} , Y_{T_{l+1}}=Y_{T_l}+1) \notag \\& = \E(R_{ 
T_{l+1},T}-\1_{\lbrace T_{r^{-1}(l+1)}^->T\rbrace} | Y_{T_l}, T_l, 
T_{l+1} , Y_{T_{l+1}}=Y_{T_l}+1)  \notag
 \\
 &=(Y_{T_l}+1) e^{-\mu(T-T_{l+1})} -e^{-\mu(T-T_{l+1})}  = Y_{T_l} 
e^{-\mu(T-T_{l+1})}\label{v}
\end{align}
and 
\begin{align*}
 \E(R_{ T_l,T}|Y_{T_l}, T_l, T_{l+1} , Y_{T_{l+1}}=Y_{T_l}-1) &= \E(R_{ 
T_{l+1},T}|Y_{T_l}, T_l, T_{l+1} , Y_{T_{l+1}}=Y_{T_l}-1) .
 \\
 &=(Y_{T_l}-1) e^{-\mu(T-T_{l+1})}.
\end{align*}
Equation \eqref{de} implies
\begin{align}
  \E(R_{ T_l,T}|Y_{T_l}, T_l, T_{l+1} ) = Y_{T_l} e^{-\mu(T-T_{l+1})} - 
e^{-\mu(T-T_{l+1})} p^- .\notag
\end{align}
Secondly, we compute the conditional second moment of $R_{ T_l,T}$:
\begin{align}
 \E( R_{ T_l,T}^2 | Y_{T_l}, T_l, T_{l+1} )&=p^+   \E( R_{ T_l,T}^2 | Y_{T_l}, 
T_l, T_{l+1} , Y_{T_{l+1}}=Y_{T_l}+1)\notag
 \\ 
 &\hphantom{=} +p^-  \E( R_{ T_l,T}^2 | Y_{T_l}, T_l, T_{l+1} , 
Y_{T_{l+1}}=Y_{T_l}-1). 
\label{dp}
\end{align}
We treat the summands separately again. For the case where a birth occurs at 
time $T_{l+1}$, we have \fontsize{11.5}{11}
\begin{align}
 &\E( R_{ T_l,T}^2 | Y_{T_l}, T_l, T_{l+1} , Y_{T_{l+1}}=Y_{T_l}+1) \notag
 \\[1mm]
 &= \E((R_{ T_{l+1},T}-\1_{\lbrace T_{r^{-1}(l+1)}^->T\rbrace})^2 | Y_{T_l}, 
T_l, 
T_{l+1} , Y_{T_{l+1}}=Y_{T_l}+1) \notag
 \\[-0.5mm]
 &= \E\Bigl(R_{ T_{l+1},T}^2 -2 R_{ T_{l+1},T} \1_{\lbrace 
T_{r^{-1}(l+1)}^->T\rbrace} + \1_{\lbrace T_{r^{-1}(l+1)}^->T\rbrace} \Bigm| 
Y_{T_l}, T_l, T_{l+1} , Y_{T_{l+1}}=Y_{T_l}+1,Y_{T_l}>0\Bigr) \label{zp}
\end{align}\normalsize
and further
\begin{align}
 \E( R_{ T_{l+1},T}^2 | Y_{T_l}, T_l, T_{l+1} , Y_{T_{l+1}}=Y_{T_l}+1) 
&=\Var(R_{ T_{l+1},T} | Y_{T_l}, T_l, T_{l+1} , Y_{T_{l+1}}=Y_{T_l}+1) \notag
 \\
 &\hphantom{=} + (\E(R_{ T_{l+1},T} | Y_{T_l}, T_l, T_{l+1} , 
Y_{T_{l+1}}=Y_{T_l}+1))^2. 
\label{varl}
\end{align}
Given $Y_{T_{l+1}}$, let $\mathcal{L}_{l+1}$ be the set of the $Y_{T_{l+1}}$ 
nodes living at time $T_{l+1}$. Then by independence of various death times, we 
obtain for the conditional variance 
\begin{align*}
 &\Var(R_{ T_{l+1},T} | Y_{T_l}, T_l, T_{l+1} , Y_{T_{l+1}}=Y_{T_l}+1) \notag
 \\
 &= \Var\biggl(\sum\limits_{j\in \mathcal{L}_{l+1}} \1_{\lbrace T_j^->T 
\rbrace}\biggm| Y_{T_l}, T_l, T_{l+1} , Y_{T_{l+1}}=Y_{T_l}+1,Y_{T_l}>0\biggr)  
  \\
 &= (Y_{T_l}+1) (e^{-\mu(T-T_{l+1})}-e^{-2\mu(T-T_{l+1})}).
\end{align*}
For the second summand of \eqref{varl}, we obtain:
\begin{align*}
 \E(R_{ T_{l+1},T} | Y_{T_l}, T_l, T_{l+1} , Y_{T_{l+1}}=Y_{T_l}+1) = 
(Y_{T_l}+1) 
e^{-\mu(T-T_{l+1})}.
\end{align*}
Thus \eqref{varl} is equal to
\begin{align*}
(Y_{T_l}+1)\biggl(e^{-\mu(T-T_{l+1})}+ Y_{T_l} e^{-2 \mu(T-T_{l+1})}\biggr).
\end{align*}
For the remaining parts of \eqref{zp}, we have
\begin{align*}
 &\E( R_{ T_{l+1},T} \1_{\lbrace T_{r^{-1}(l+1)}^->T\rbrace}  | Y_{T_l}, T_l, 
T_{l+1} , Y_{T_{l+1}}=Y_{T_l}+1) 
 \\
 &= \P(T_{r^{-1}(l+1)}^->T | Y_{T_l}, T_l, T_{l+1} , Y_{T_{l+1}}=Y_{T_l}+1)  
 \\
 &\ \ \ \cdot \E(R_{ T_{l+1},T} | T_{r^{-1}(l+1)}^->T,Y_{T_l}, T_l, T_{l+1} , 
Y_{T_{l+1}}=Y_{T_l}+1)
 \\
 &= e^{-\mu(T-T_{l+1})} (1+ Y_{T_l} e^{- \mu (T-T_{l+1})})
\end{align*}
and 
\begin{align*}
 \E(\1_{\lbrace T_{r^{-1}(l+1)}^->T\rbrace } | Y_{T_l}, T_l, T_{l+1} , 
Y_{T_{l+1}}=Y_{T_l}+1) = e^{-\mu(T-T_{l+1})}.
\end{align*}
Thus \eqref{zp} implies
\begin{align*}
 &\E( R_{ T_l,T}^2 | Y_{T_l}, T_l, T_{l+1} , Y_{T_{l+1}}=Y_{T_l}+1)  
 \\
 &= (Y_{T_l}+1) \biggl(e^{-\mu(T-T_{l+1}} + Y_{T_l} e^{-2\mu(T-T_{l+1})}\biggr) 
- 2\biggl(e^{-\mu(T-T_{l+1}}+ Y_{T_l} e^{-2 \mu (T-T_{l+1})} \biggr) 
+e^{-\mu(T-T_{l+1})}
 \\
 &=Y_{T_l} e^{-\mu (T-T_{l+1})}- Y_{T_l} e^{-2\mu (T-T_{l+1})} + Y_{T_l}^2 
e^{-2\mu(T-T_{l+1})}.
\end{align*}
For the case where a death occurs at time $T_{l+1}$, we have
\begin{align*}
 &\E( R_{ T_l,T}^2 | Y_{T_l}, T_l, T_{l+1} , Y_{T_{l+1}}=Y_{T_l}-1) 
 \\
 &= \Var(R_{ T_{l+1},T}| Y_{T_l}, T_l, T_{l+1} , Y_{T_{l+1}}=Y_{T_l}-1) + 
(\E(R_{ 
T_{l+1},T}| Y_{T_l}, T_l, T_{l+1} , Y_{T_{l+1}}=Y_{T_l}-1))^2
 \\
 &= (Y_{T_l}-1) ( e^{- \mu (T-T_{l+1})} - e^{-2\mu(T-T_{l+1})}) + (Y_{T_l} -1)^2 
e^{-2\mu (T-T_{l+1})}
 \\
 &= (Y_{T_l} -1) e^{-\mu(T-T_{l+1})} + (Y_{T_l}^2 - 3Y_{T_l} +2) e^{-2\mu 
(T-T_{l+1})}.
\end{align*}
Thus from \eqref{dp} follows
\begin{align*}
 \E( R_{ T_l,T}^2 | Y_{T_l}, T_l,T_{l+1},Y_{T_l}>0 ) &= Y_{T_l} 
e^{-\mu(T-T_{l+1})} - Y_{T_l} e^{-2\mu(T-T_{l+1})} + Y_{T_l}^2 
e^{-2\mu(T-T_{l+1})} 
 \\&\ \ \ + p^- \biggl(  2(1- Y_{T_l}) e^{-2\mu (T-T_{l+1})}-e^{-\mu(T-T_{l+1})} 
\biggr) .
\end{align*}
\end{bew}

Knowing the conditional moments of $R_{ T_l,T}$, we can find an upper bound for 
a more complex conditional expectation involving $R_{ T_l,T}$ that appears in 
the proof of the main theorem. The following lemma allows us to control the proportion of nodes surviving up to time $T$ of the 
nodes
alive at time $T_l$ (if we reduce both numbers by one).

\begin{lemma}\label{nl1}
For $l\in\en$, we have 
 \begin{align*}
  &\E\biggl(\biggl|\frac {R_{ T_l,T}-1} {Y_{ T_l}-1}\1_{\lbrace Y_{ T_l}>1 
\rbrace}-e^{-\mu(T- T_{l+1})}\biggr| \ \biggm| Y_{T_l}, T_l, T_{l+1} 
\biggr) \leq \biggl(\frac{6}{Y_{T_l}}\biggr)^\frac{1}{2}. 
\end{align*}
\end{lemma}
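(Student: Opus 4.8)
The plan is to reduce the whole statement to a conditional second-moment estimate and then feed it the two conditional moments supplied by Lemma~\ref{no}. Throughout I condition on $\mathcal{G}:=\sigma(Y_{T_l},T_l,T_{l+1})$, abbreviate $y:=Y_{T_l}$, $q:=e^{-\mu(T-T_{l+1})}\in(0,1]$ and $R:=R_{T_l,T}$. The case $y=1$ is immediate: the indicator $\1_{\lbrace Y_{T_l}>1\rbrace}$ vanishes, so the conditional expectation equals $q\le 1\le\sqrt6$, which is exactly the claimed bound $\sqrt{6/y}$ at $y=1$. Hence I may assume $y\ge2$, where the indicator is $1$ and the quantity of interest is $\E(|W|\mid\mathcal G)$ with $W:=\frac{R-1}{y-1}-q$.

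First I would pass from the $L^1$ to the $L^2$ norm by conditional Jensen (equivalently Cauchy--Schwarz), $\E(|W|\mid\mathcal G)\le\sqrt{\E(W^2\mid\mathcal G)}$, and then split the second moment into variance and squared bias,
\[
 \E(W^2\mid\mathcal G)=\frac{\Var(R\mid\mathcal G)}{(y-1)^2}+\bigl(\E(W\mid\mathcal G)\bigr)^2 .
\]
For the bias I use the first conditional moment, which (as derived in the proof of Lemma~\ref{no}) equals $\E(R\mid\mathcal G)=(y-p)q$ with $p:=\P(Y_{T_{l+1}}=Y_{T_l}-1\mid\mathcal G)\in[0,1]$; thus $\E(W\mid\mathcal G)=\frac{(1-p)q-1}{y-1}$ with $(1-p)q\in[0,1]$, so this bias has absolute value at most $\frac{1}{y-1}$ and its square is at most $(y-1)^{-2}$.

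The crux is the variance. Substituting the first and second conditional moments of Lemma~\ref{no} into $\Var(R\mid\mathcal G)=\E(R^2\mid\mathcal G)-\E(R\mid\mathcal G)^2$, the leading $y^2q^2$ contributions of the two terms cancel exactly, and after collecting the remainder one obtains $\Var(R\mid\mathcal G)=yq(1-q)+pq(2q-1-pq)$. Bounding $q(1-q)\le\frac14$ and maximising the $O(1)$ remainder $pq(2q-1-pq)\le\frac14$ over $p,q\in[0,1]$ gives the sharp-enough estimate $\Var(R\mid\mathcal G)\le\frac{y+1}{4}$. Combining with the bias bound, $\E(W^2\mid\mathcal G)\le\frac{(y+1)/4+1}{(y-1)^2}=\frac{y+5}{4(y-1)^2}$, and a one-line check that $\frac{y+5}{4(y-1)^2}\le\frac6y$ for integers $y\ge2$ (it rearranges to $23y^2-53y+24\ge0$) completes the argument.

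I expect the main obstacle to be precisely this variance bound. The naive estimate $\Var(R\mid\mathcal G)\lesssim y$ with an unspecified constant is far too lossy: at the smallest admissible value $y=2$ the prefactor $(y-1)^2=1$ offers no damping, so the squared bias and every $O(1)$ slack in the variance enter the final bound undiminished, and even a slightly oversized constant would break $\E(W^2\mid\mathcal G)\le6/y$. The two places requiring genuine care are therefore (i) letting the $y^2q^2$ terms cancel cleanly, which forces me to use the \emph{exact} first moment $(y-p)q$ rather than a convenient over- or underestimate, and (ii) pinning the residual constant in the variance down to $\tfrac14$. The small-population regime, and concretely $y=2$, is the binding case and should be verified by hand; for $y\ge3$ the inequality then holds with ample room to spare.
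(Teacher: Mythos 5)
Your proposal is correct and follows essentially the same route as the paper: pass to the conditional second moment via Jensen's inequality and then feed in the two conditional moments of Lemma \ref{no}. The only difference is bookkeeping — you organise the second moment as variance plus squared bias and exploit the exact cancellation of the $y^2q^2$ terms (correctly using the first moment $(y-p^-)q$ from the \emph{proof} of Lemma \ref{no} rather than the slightly different expression in its statement), which yields the marginally sharper intermediate bound $\tfrac{y+5}{4(y-1)^2}$ in place of the paper's $\tfrac{3}{y-1}$, but both land under $6/Y_{T_l}$ by the same final comparison.
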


 \begin{bew}
By Jensen's inequality, we obtain
\begin{align}
 &\E\biggl(\biggl|\frac {R_{ T_l,T}-1} {Y_{ T_l}-1}\1_{\lbrace Y_{ T_l}>1 
\rbrace}-e^{-\mu(T- T_{l+1})}\biggr| \ \biggm| Y_{T_l}, T_l, T_{l+1} 
\biggr)\notag
 \\
 &\leq \biggl(\E\biggl(\biggl(\frac {R_{ T_l,T}-1} {Y_{ T_l}-1}\1_{\lbrace Y_{ 
T_l}>1 \rbrace}-e^{-\mu(T- T_{l+1})}\biggr)^2 \ \biggm| Y_{T_l}, T_l, T_{l+1} 
\biggr)\biggr)^{\frac{1}{2}}\notag
 \\
 &= \biggl(\1_{\lbrace Y_{ 
T_l}>1 \rbrace}\frac{\E( (R_{ T_l,T}-1)^2 | Y_{T_l}, T_l, T_{l+1} )}{(Y_{ 
T_l}-1)^2} \notag
\\
&\hphantom{= \biggl(} -\frac{2}{Y_{ T_l}-1}\1_{\lbrace Y_{ 
T_l}>1 \rbrace} e^{-\mu(T- T_{l+1})} \E(R_{ T_l,T}-1|Y_{T_l}, 
T_l, T_{l+1}) + e^{-2\mu(T- T_{l+1})}\biggr)^{\frac{1}{2}}.\label{j}
\end{align}

Lemma \ref{no} implies 
\begin{align}
  \frac{2}{Y_{T_l}-1} \1_{\lbrace Y_{T_l} >1 \rbrace } e^{-\mu(T-T_{l+1})} 
\E(R_{T_l,T}-1| Y_{T_l},T_l,T_{l+1})=& 2 \cdot \1_{\lbrace Y_{T_l} >1 \rbrace } 
e^{-2\mu(T-T_{l+1})} \notag
 \\
 &- \frac{2}{Y_{T_l}-1} \1_{\lbrace Y_{T_l} >1 \rbrace } e^{-\mu(T-T_{l+1})} 
.\label{t01}
\end{align}
and
\begin{align}
 &\frac{1}{(Y_{T_l}-1)^2} \1_{\lbrace Y_{T_l} >1 \rbrace } \E((R_{T_l,T} -1)^2| 
Y_{T_l},T_l,T_{l+1}) \notag
 \\
 &= \frac{1}{(Y_{T_l}-1)^2} \1_{\lbrace Y_{T_l} >1 \rbrace } \biggl( 
\E(R_{T_l,T}^2| Y_{T_l},T_l,T_{l+1}) - 2 \E(R_{T_l,T}| Y_{T_l},T_l,T_{l+1}) 
+1\biggr)\notag
 \\
 &=\frac{1}{(Y_{T_l}-1)^2} \1_{\lbrace Y_{T_l} >1 \rbrace } \biggl( Y_{T_l} 
e^{-\mu(T-T_{l+1})} - Y_{T_l} e^{-2\mu(T-T_{l+1})} + Y_{T_l}^2 
e^{-2\mu(T-T_{l+1})} \notag
 \\[-1mm]
&\hphantom{=\frac{1}{(Y_{T_l}-1)^2}  
\biggl(} 
- \P(Y_{T_{l+1}} =Y_{T_l} - 1| Y_{T_l}, T_l, T_{l+1}) \biggl(  2(Y_{T_l}-1) 
e^{-2\mu (T-T_{l+1})}+e^{-\mu(T-T_{l+1})} \biggr)\biggr) \notag
 \\
 &\hphantom{=}- \frac{2}{Y_{T_l}-1} \1_{\lbrace Y_{T_l} >1 \rbrace } 
e^{-\mu(T-T_{l+1})}
+\frac{\1_{\lbrace Y_{T_l} >1 \rbrace }}{(Y_{T_l}-1)^2} \notag
 \\
 &
 \leq \frac{1}{(Y_{T_l}-1)^2} \1_{\lbrace Y_{T_l} >1 \rbrace } \biggl( Y_{T_l} 
e^{-\mu(T-T_{l+1})} + Y_{T_l}(Y_{T_l}-1) e^{-2\mu(T-T_{l+1})}  \biggr) \notag
 \\
 &\hphantom{=} -\frac{2}{Y_{T_l}-1} \1_{\lbrace Y_{T_l} >1 \rbrace }e^{-\mu(T-T_{l+1})}
+\frac{\1_{\lbrace Y_{T_l} >1 \rbrace }}{(Y_{T_l}-1)^2} \notag
 \\
 &= \frac{\1_{\lbrace Y_{T_l} >1 \rbrace }}{Y_{T_l}-1}  e^{-\mu(T-T_{l+1})} + 
\frac{\1_{\lbrace Y_{T_l} >1 \rbrace }}{(Y_{T_l}-1)^2}  e^{-\mu(T-T_{l+1})} + 
\frac{Y_{T_l}}{Y_{T_l}-1} \1_{\lbrace Y_{T_l} >1 \rbrace } e^{-2\mu(T-T_{l+1})}  
\notag
 \\
 &\hphantom{=}- \frac{2}{Y_{T_l}-1} \1_{\lbrace Y_{T_l} >1 \rbrace } 
e^{-\mu(T-T_{l+1})}+\frac{\1_{\lbrace Y_{T_l} >1 \rbrace }}{(Y_{T_l}-1)^2} 
\notag
 \\
 &\leq \1_{\lbrace Y_{T_l} >1 \rbrace } e^{-2\mu(T-T_{l+1})} 
+\frac{1}{Y_{T_l}-1}\1_{\lbrace Y_{T_l} >1 \rbrace } e^{-2\mu(T-T_{l+1})} + 
\frac{1}{(Y_{T_l}-1)^2}\1_{\lbrace Y_{T_l} >1 \rbrace }\notag
 \\
 &\leq \biggl(\frac{1}{Y_{T_l}-1}+ e^{-2\mu(T-T_{l+1})}\biggr)\1_{\lbrace 
Y_{T_l} >1 \rbrace } .  \label{t0}
\end{align}
By \eqref{t01} and \eqref{t0}, we can bound the right-hand side of \eqref{j} 
from above by
\begin{align}
 \biggl(\frac{3}{Y_{T_l}-1}\1_{\lbrace Y_{T_l} >1 \rbrace }-\1_{\lbrace Y_{T_l} 
>1 
\rbrace } e^{-2\mu(T-T_{l+1})} +e^{-2\mu(T-T_{l+1})}\biggr)^\frac{1}{2} &\leq 
\biggl(\frac{3}{Y_{T_l}-1}\1_{\lbrace Y_{T_l} >1 \rbrace } + \1_{\lbrace 
Y_{T_l} 
=1 \rbrace }\biggr)^\frac{1}{2}\notag
 \\
 &\leq \biggl(\frac{6}{Y_{T_l}}\biggr)^\frac{1}{2}. \label{tag3a}
\end{align}
\hspace*{2mm} \vspace*{-7mm}

\hspace*{2mm}
\end{bew}
%

For the conditional expectation of the sum of the squared inter-event times 
since the birth of the randomly picked node, we 
have the following lemma, which is proved by using Lemmas~\ref{ne}, \ref{hl1} and \ref{n3}.
\begin{lemma}\label{nl}\vspace{1mm}
For ${T\geq(\frac{2}{\lambda-\mu}\log(2)\vee\frac{2(\log(4 
\lambda)-\log(\lambda-\mu))}{\lambda+\mu}})$, we have
 \begin{align*}
 \E^*\biggl(\sum_{l=r(J_T)}^{\mathcal{M}_T -1}  ( T_{l+1}- T_l)^2\biggr) 
\leq& \hspace{1mm} 
\frac{\mu}{\lambda}\frac{T^2}{4} 
e^{-(\lambda-\mu)T} +60 T^2 
\frac{\lambda^3(\lambda+\mu)}{(\lambda-\mu)^4} 
e^{-(\lambda+\mu)T}+ T^2 e^{-\frac{1}{2}\lambda 
T} \notag
\\&+\frac{2(\lambda-\mu)}{\lambda}\biggl(\log\biggl(\frac{\lambda
} {\lambda-\mu} \biggr)+(\lambda-\mu)T\biggr)T^2e^{-(\lambda-\mu)T} \notag
 \\
 &+  \biggl(\frac 3 4 T^2+\frac{T}{2(\lambda+\mu)}\biggr) \biggl( 
1+2\log\biggl(\frac{\lambda}{\lambda-\mu}\biggr)+ (\lambda-\mu) T\biggr) 
e^{-\frac{1}{4} (\lambda-\mu)T}.
\end{align*} 
\end{lemma}

\begin{bew}
 For the left-hand side, we deduce
\begin{align}
& \E^*\biggl(\sum_{l=r(J_T)}^{\mathcal{M}_T -1}  ( T_{l+1}- T_l)^2\biggr)\notag
\\
&\leq \E^*\biggl( \1_{\lbrace  T_{\kappa(T)} > T\rbrace} \1_{\lbrace  
T_{\mathcal{K}(T)} <  T_{r(J_T)}\rbrace} \sum_{l=1}^{\mathcal{M}_T -1} 
\1_{\lbrace  T_{l} \geq  T_{r(J_T)}\rbrace} ( T_{l+1} - T_{l}) \max 
\limits_{r(J_T)\leq j\leq \mathcal{M}_T -1} ( T_{j+1} - T_{j})\biggr) \notag
\\
&\ \ +\E^*( \1_{\lbrace  T_{\kappa(T)} \leq T \rbrace}) T^2 + \E^*(\1_{  
T_{\mathcal{K}(T)} \geq  T_{r(J_T)} \rbrace}) T^2. \label{2}
\end{align}
Note that, given $ (Y_{T_k})_{k\in\en}$ and $\mathcal{K}(T)$, the inter-event 
times $T_{j+1}-T_{j}$ are $\mathrm{Exp}((\lambda+\mu)Y_{T_j})$ distributed and 
independent for $j>\mathcal{K}(T)$. In order to derive an upper bound for the 
first conditional 
expectation on the right-hand side of \eqref{2}, we introduce a sequence of 
random variables 
$(U_j)_{j\in\en} $ such that, given $ (Y_{T_k})_{k\in\en}$ and $\mathcal{K}(T)$, $U_j \sim 
\mathrm{Exp}((\lambda+\mu) \min_{\mathcal{K}(T)< k} 
Y_{T_k})$ i.i.d. Then, given $ (Y_{T_k})_{k\in\en}$ and $\mathcal{K}(T)$, we have $ 
T_{j+1}- T_j \leq_{st} U_j$ for $ \mathcal{K}(T) < j \leq \kappa(T)$ and 
obtain
\begin{align}
&\E\biggl( \1_{\lbrace  T_{\kappa(T)} > T\rbrace} \1_{\lbrace  
T_{\mathcal{K}(T)} < 
 T_{r(J_T)}\rbrace} \sum_{l=1}^{\mathcal{M}_T -1} \1_{\lbrace  T_{l} \geq  
T_{r(J_T)}\rbrace} ( T_{l+1} - T_{l}) \max \limits_{r(J_T)\leq j\leq 
\mathcal{M}_T -1} ( T_{j+1} - T_{j})\biggm| Y_T>0\biggr)\notag
\\
&\leq \frac{T}{2}\E\biggl( \1_{\lbrace  T_{\kappa(T)} > T\rbrace} \1_{\lbrace  
T_{\mathcal{K}(T)} < 
 T_{r(J_T)}\rbrace} \max \limits_{r(J_T)\leq j\leq 
\mathcal{M}_T -1} ( T_{j+1} - T_{j})\biggm| Y_\infty>0\biggr) \notag \\ &\hphantom{\leq}+ 
\frac{T^2}{4}\P(Y_\infty=0|Y_T>0).\label{lll}
\end{align}
By Lemma \ref{austerben2}, the second summand is equal to
$$\frac{T^2}{4} \frac{\mu}{\lambda}e^{-(\lambda-\mu)T}.$$
The first summand of \eqref{lll} is bounded from above by 
\begin{align*}
&\frac{T}{2} \E\biggl(\E\biggl( \1_{\lbrace  T_{\kappa(T)} > T\rbrace} 
\1_{\lbrace  
T_{\mathcal{K}(T)} <  T_{r(J_T)}\rbrace} \max \limits_{\mathcal{K}(T)< j\leq 
\kappa(T)} ( T_{j+1} - T_{j})\biggm|\mathcal{K}(T),(Y_{T_k})_{k\geq 1} 
\biggr)\biggm| Y_\infty>0\biggr)
\\
&\leq \frac{T}{2} \E\biggl(\E 
\biggl(\max\limits_{1\leq j\leq \kappa(T)} U_j\biggm|\mathcal{K}(T),  
(Y_{T_k})_{k\geq 1}\biggr)\biggm| Y_\infty>0 \biggr) 
\\
&= \frac{T}{2} \E\biggl( \frac{1}{(\lambda+\mu) \min 
\limits_{\mathcal{K}(T)< 
k} Y_{T_k}} \sum_{l=1}^{\kappa(T)} \frac{1}{l}\biggm| Y_\infty>0 
\biggr) ,
\end{align*}
where the last equality follows from the formula for the expectation of 
the maximum 
of i.i.d.\ exponentially distributed random variables (see e.g.\ the 
introduction 
of 
\cite{exp}). 

Using the well-known upper bound for the harmonic sum yields
\begin{align}
 &\frac{T}{2} \E\biggl( \frac{1}{(\lambda+\mu) \min \limits_{\mathcal{K}(T)< 
k}{Y_{T_k}}} \sum_{l=1}^{\kappa(T)} \frac{1}{l}\biggm| Y_\infty>0 
\biggr)\notag
\\
&\leq \frac{T}{2(\lambda+\mu)} \E\biggl( \frac{1}{ \min \limits_{\mathcal{K}(T)< 
k}{Y_{T_k}}}\biggm| Y_\infty>0 
\biggr) (\log(\kappa(T)) + 1) \notag
 \\
 &\leq  \frac{T}{2(\lambda+\mu)}\E\biggl( \frac{1}{ \min \limits_{\mathcal{K}(T)< 
k}{Y_{T_k}}}\biggm| Y_\infty>0 \biggr)  
\biggl(\frac{3}{2}(\lambda+\mu)T + 1\biggr).\label{ab}
\end{align}

For the conditional expectation in \eqref{ab}, we obtain for $T\geq 
\frac{2\log(2)}{\lambda-\mu}$
\begin{align}
 \E\biggl( \frac{1}{ \min \limits_{\mathcal{K}(T)< 
k}  
Y_{T_k}}\biggm| Y_\infty>0 \biggr) &=  \E\biggl( \frac{1}{ \min 
\limits_{T/2\leq t}  Y_{t}}\biggm|  Y_\infty > 0 \biggr)\notag \\& \leq \biggl(1+2\log\biggl(\frac{\lambda}{\lambda-\mu}\biggr) + (\lambda-\mu)T \biggr) 
e^{-\frac{1}{4}(\lambda-\mu)T}, \label{new}
 \end{align}
 where the  last inequality follows from Lemma \ref{hl1} with $\delta = 1$ and 
$\gamma=1/4$ and 
Lemma~\ref{n3}.

Thus we can conclude that the first summand of the right-hand side of 
\eqref{lll} is smaller than or equal to
$$ \biggl(\frac 3 4 T^2+\frac{T}{2(\lambda+\mu)}\biggr) \biggl( 
1+2\log\biggl(\frac{\lambda}{\lambda-\mu}\biggr)+ (\lambda-\mu) T\biggr) 
e^{-\frac{1}{4} (\lambda-\mu)T}$$
for sufficiently large $T$.

For the last line of \eqref{2}, we can use the upper bounds from Lemma \ref{ne} 
and obtain that for ${T\geq(\frac{1}{\lambda-\mu}\log(2)\vee\frac{2(\log(4 
\lambda)-\log(\lambda-\mu))}{\lambda+\mu}})$, it is smaller than or 
equal to 
\begin{align}
    \frac{60\lambda^3(\lambda+\mu)}{(\lambda-\mu)^4} T^2
e^{-(\lambda+\mu)T}+ T^2 e^{-\frac{1}{2}\lambda T} 
+\frac{2(\lambda-\mu)}{\lambda}\biggl(\log\biggl(\frac{\lambda
} {\lambda-\mu} \biggr)+(\lambda-\mu)T\biggr)T^2e^{-(\lambda-\mu)T}.\label{52}
\end{align}

Altogether, we obtain the statement of the lemma.
\end{bew}

\section*{Appendix A3: Negligibility of multiple edges}\label{A3}
In this section, we show that multiple edges are negligible (Lemma 
\ref{negligible} below) and use this result to prove Corollary 
\ref{without_multiple} from the introduction.

First we consider the social index of a node that is connected to the node $J_T$ by an incoming edge.
\begin{lemma}\label{size-biased}
Given $Y_T=n$ for $n\in \en\setminus\{1\}$,
 the cumulative distribution function $F_{\t S}$ of the social index $\t S$ of a node $i_1$ that is connected to $J_T$ at time $T$ by an edge that was created by $i_1$ is given by
 $$F_{\t S}(s)=\int\limits_0^s\E\biggl( \frac{s_1}{\frac{s_1}{n-1}+ \frac{1}{n-1}\sum_{i=2}^{n-1} S_i }\biggr) \P^S(ds_1)= \E\biggl( \frac{S_1 \1_{[0,s]}(S_1)}{\frac{S_1}{n-1}+ \frac{1}{n-1}\sum_{i=2}^{n-1} S_i }\biggr) $$
 for $s\geq 0$.
\end{lemma}

\begin{bew} 
We condition on $Y_T=n$ for $n\in \en\setminus\{1\}$. Note that the conditional probability that $s_1$ is the social index of a node connected to $J_T$ at time $T$ by an incoming edge given $S_1=s_1,\ldots, S_n=s_n$ and $J_T=j_T\in \{2,\ldots,n\}$ is $s_1(\sum_{i\not=j_T} s_i)^{-1}$. Since the social indices are identically distributed, we thus obtain by Bayes' Theorem\fontsize{10.8}{11}
 {\allowdisplaybreaks[1]\begin{align}
  F_{\t S}(s) &= \biggl( \int\limits_{[0,s] \times [0,\infty)^{n-2}}\frac{s_1}{\sum_{i=1}^{n-1} s_i}  \P^S(ds_1) \ldots  \P^S(d s_{n-1})\biggr)  \biggl( \int\limits_{ [0,\infty)^{n-1}}\frac{s_1}{\sum_{i=1}^{n-1} s_i}  \P^S(ds_1) \ldots  \P^S(d s_{n-1})\biggr) ^{-1}\notag
  \\
  &= \biggl(\int\limits_0^s \E\biggl(\frac{S_1}{\sum_{i=1}^{n-1} S_i }\biggl|S_1=s_1\biggr) \P^S(ds_1) \biggr) \biggl(\E\biggl(\frac{S_1}{\sum_{i=1}^{n-1} S_i } \biggr)\biggr)^{-1}.\label{jgl}
 \end{align}}\normalsize
 Since we have
 $$1= \E\biggl(\sum_{j=1}^{n-1} \frac{S_j}{\sum_{i=1}^{n-1} S_i } \biggr) = \sum_{j=1}^{n-1}\E\biggl( \frac{S_j}{\sum_{i=1}^{n-1} S_i } \biggr) = (n-1) \E\biggl( \frac{S_1}{\sum_{i=1}^{n-1} S_i } \biggr),$$ the right-hand side of \eqref{jgl} is equal to
 \begin{align*}
  \int\limits_0^s\E\biggl((n-1) \frac{S_1}{\sum_{i=1}^{n-1} S_i }\biggl|S_1=s_1\biggr)\P^S(ds_1)=\int\limits_0^s\E\biggl( \frac{s_1}{\frac{s_1}{n-1}+ \frac{1}{n-1}\sum_{i=2}^{n-1} S_i }\biggr) \P^S(ds_1) .
 \end{align*}
\end{bew}
\begin{kor}\label{size_biased2}
 The expected value of the social index $\t S$ of a node $i_1$ that is connected to $J_T$ at time $T$ by an edge that was created by $i_1$ is bounded from above by
 $$ \frac{2\sigma_S^2}{c^2 \E(S) } +\frac{\E(S^2)}{\E(S)(1-c) }  $$
 for any $c\in(0,1)$.
\end{kor}
\begin{bew}
We condition on $Y_T=n\in \en\setminus\{1\}$.
 By Lemma \ref{size-biased}, the conditional expected value of the social index $\t S$ of a node $i_1$ that is connected to $J_T$ at time $T$ by an edge that was created by $i_1$ is then equal to
 $$ \int\limits_0^\infty s_1 \E\biggl( \frac{s_1}{\frac{s_1}{n-1}+ \frac{1}{n-1}\sum_{i=2}^{n-1} S_i }\biggr) \P^S(ds_1) =\E\biggl( \frac{S_1^2}{\frac{S_1}{n-1}+ \frac{1}{n-1}\sum_{i=2}^{n-1} S_i }\biggr), $$
 and we have
 \begin{align*}
  \E\biggl( \frac{S_1^2}{\frac{S_1}{n-1}+ \frac{1}{n-1}\sum_{i=2}^{n-1} S_i }\biggr)&\leq \E\biggl( \1_{\{\frac{1}{n-2}\sum_{i=2}^{n-1} S_i\leq \E(S)(1-c)\}}  (n-1) S_1\biggr)
  \\
  &\hphantom{\leq}+\E\biggl( \1_{\{\frac{1}{n-2}\sum_{i=2}^{n-1} S_i> \E(S)(1-c)\}} \frac{(n-1) S_1^2}{(n-2) \E(S)(1-c) }\biggr)
  \\
  &\leq
  \P\biggl(\frac{1}{n-2}\sum\limits_{i=2}^{n-1} S_i\leq \E(S)(1-c)\biggr) (n-1) \E(S_1)+ \frac{2\E(S_1^2)}{ \E(S)(1-c) },
 \end{align*}
 where we use the convention $\frac{0}{0}:=0$. By Chebyshev's inequality, we have 
\begin{align*}
 \P\biggl(\frac{1}{n-2}\sum\limits_{i=2}^{n-1} S_i\leq \E(S)(1-c)\biggr) &\leq  \P\biggl(\biggl|\frac{1}{n-2}\sum\limits_{i=2}^{n-1} S_i - \E(S)\biggr|\geq c\E(S) \biggr) \\ &\leq \frac{1}{(c \E(S))^2} \Var\biggl( \frac{1}{n-2}\sum\limits_{i=2}^{n-1} S_i \biggr) 
 \leq \frac{1}{n-2} \frac{\sigma_S^2}{(c \E(S))^2}, 
\end{align*}
which yields the desired result.

\end{bew}

The following lemma states that multiple edges are negligible.

\begin{lemma}\label{negligible}
 The probability that an individual picked uniformly at random at time $T$ has 
at least one 
multiple edge given the number of nodes is positive at time $T$ is of the order 
$O(T^{2} e^{-\frac{1}{6}(\lambda-\mu)T})$ as $T \to\infty$.
\end{lemma}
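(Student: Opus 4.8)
The plan is to bound the multiple-edge event for the randomly picked node $J_T$ by a second-moment estimate on the pairwise Poisson parameters obtained in Section~\ref{sec4}. Conditionally on the path $(Y_t)_{0\le t\le T}$, the social indices $(S_k)_{k\in\en}$ and $J_T$, the numbers of edges present at time $T$ between $J_T$ and the various nodes $i\ne J_T$ are independent Poisson variables; writing their means as $\Lambda_i=(S_i+S_{J_T})\tilde\Lambda_i$, one reads off from \eqref{lambdag} that $\Lambda_T=\sum_i\Lambda_i$ and that the path-measurable factor is
\[
\tilde\Lambda_i=\frac{\alpha}{\beta}\1_{\{T_i^- > T\}}\Biggl(\frac{\1_{\{Y_T>1\}}}{Y_T-1}\bigl(1-e^{-\beta(T-T_{\mathcal{M}_T})}\bigr)+\sum_{l=r(i)\vee r(J_T)}^{\mathcal{M}_T-1}\frac{\1_{\{Y_{T_l}>1\}}}{Y_{T_l}-1}\bigl(e^{-\beta(T-T_{l+1})}-e^{-\beta(T-T_l)}\bigr)\Biggr).
\]
Since $J_T$ carries a multiple edge precisely when some pairwise count is at least $2$, the union bound together with $\P(\mathrm{Po}(\lambda)\ge2)=1-e^{-\lambda}(1+\lambda)\le\lambda^2/2$ gives
\[
\P(J_T\text{ has a multiple edge}\mid Y_T>0)\le\tfrac12\,\E^*\Bigl(\sum_i\Lambda_i^2\Bigr).
\]

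Next I would turn this second moment into a first moment of the $\tilde\Lambda_i$. Bounding $\tilde\Lambda_i^2\le(\max_j\tilde\Lambda_j)\,\tilde\Lambda_i$ and integrating out the social indices, which are independent of the path and of $J_T$ and satisfy $\E((S_i+S_{J_T})^2)=2\E(S^2)+2\E(S)^2\le4\E(S^2)$, yields
\[
\E^*\Bigl(\sum_i\Lambda_i^2\Bigr)\le4\E(S^2)\,\E^*\Bigl((\max_j\tilde\Lambda_j)\sum_i\tilde\Lambda_i\Bigr).
\]
The key point is the deterministic bound $\sum_i\tilde\Lambda_i\le2\alpha/\beta$: in the first term exactly $Y_T-1$ summands survive, so it equals $\frac{\alpha}{\beta}\1_{\{Y_T>1\}}(1-e^{-\beta(T-T_{\mathcal{M}_T})})\le\alpha/\beta$, and in the second term interchanging the summations produces the factor $(R_{T_l,T}-1)/(Y_{T_l}-1)\le1$ (because $R_{T_l,T}\le Y_{T_l}$) multiplied by the telescoping sum $\sum_l(e^{-\beta(T-T_{l+1})}-e^{-\beta(T-T_l)})\le1$. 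Thus everything reduces to estimating $\E^*(\max_j\tilde\Lambda_j)$.

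For this remaining expectation I would split on whether the picked node was born after time $T/2$, i.e.\ whether $r(J_T)>\mathcal{K}(T)$. On $\{r(J_T)>\mathcal{K}(T)\}$ every event time entering $\tilde\Lambda_j$ satisfies $T_l\ge T/2$; bounding each $\1_{\{Y>1\}}/(Y-1)\le2/\sqrt{Y}$ and telescoping once more gives $\max_j\tilde\Lambda_j\le(4\alpha/\beta)\max_{t\ge T/2}Y_t^{-1/2}$, and the expectation of this maximum has already been controlled in the proof of Theorem~\ref{thm}: splitting $\{Y_\infty>0\}$ (where \eqref{kk12}, coming from Lemmas~\ref{hl1} and~\ref{n3}, applies) from $\{Y_\infty=0\}$ (handled by Lemma~\ref{austerben2}) bounds it by a term of order $T\,e^{-\frac16(\lambda-\mu)T}$. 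On the complementary event I would use the crude bound $\max_j\tilde\Lambda_j\le2\alpha/\beta$ together with $\P^*(r(J_T)\le\mathcal{K}(T))=O(T\,e^{-(\lambda-\mu)T})$ from Lemma~\ref{ne}(i). Adding the two contributions gives $\E^*(\max_j\tilde\Lambda_j)=O(T\,e^{-\frac16(\lambda-\mu)T})$, hence the claimed order $O(T^2e^{-\frac16(\lambda-\mu)T})$ (indeed with a better power of $T$).

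The main obstacle is to pass from $\sum_i\Lambda_i^2$ to $\E^*(\max_j\tilde\Lambda_j)$ without the roughly $Y_T$ potential partners overwhelming the smallness of each individual parameter; this is exactly what the almost-sure identity $\sum_i\tilde\Lambda_i\le2\alpha/\beta$ achieves, turning the product $(\max_j\tilde\Lambda_j)\sum_i\tilde\Lambda_i$ into a single maximum. The clean bound of $\max_j\tilde\Lambda_j$ by $\max_{t\ge T/2}Y_t^{-1/2}$ lets me recycle the estimate \eqref{kk12} essentially verbatim and is what produces the $e^{-\frac16(\lambda-\mu)T}$ factor; the conditioning book-keeping ($Y_T>0$ versus $Y_\infty>0$, and birth before versus after $T/2$) is then routine given the lemmas of the previous subsection.
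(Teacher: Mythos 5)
Your argument is correct and arrives at the stated bound (in fact at the slightly sharper order $O(T\,e^{-\frac16(\lambda-\mu)T})$) by a genuinely different route from the paper. The paper's proof conditions on the degree $N_T$ of $J_T$ and on the birth times $\rho_1<\dots<\rho_{N_T}$ of its incident edges, writes the no-multiple-edge probability as the product $\prod_{i\ge2}\bigl(1-\tfrac{i-1}{Y_{\rho_i}-1}\bigr)\1_{\{Y_{\rho_i}\ge i+1\}}$, and controls the complement by a three-way split: $N_T$ exceeding $e^{\frac{1}{12}(\lambda-\mu)T}$ (Markov's inequality with the second moment of the asymptotic degree distribution), the population dropping below $e^{\frac13(\lambda-\mu)T}$ on $[\frac T2,T]$ (the supermartingale maximal inequality of Corollary~\ref{supermartingale} with Lemmas~\ref{n3} and~\ref{austerben2}), and an elementary comparison of $(1-z/n)^n$ with $e^{-z}$ (Lemma~\ref{l3}) on the remaining good event. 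You instead exploit the conditional independence of the pairwise edge counts given the path, the social indices and $J_T$ --- this is correct but deserves one sentence of justification via the marking/thinning property of Poisson processes, since the paper only asserts independence of the edge processes generated by the various nodes, whereas the counts towards different targets $i$ all draw on the single creation process of $J_T$ --- combined with the union bound, $\P(\mathrm{Po}(\lambda)\ge2)\le\lambda^2/2$, and the deterministic inequality $\sum_i\tilde\Lambda_i\le 2\alpha/\beta$ read off from \eqref{lambdag}. That last observation is the real gain: it collapses the second moment to the single quantity $\E^*(\max_j\tilde\Lambda_j)$, which you then control on $\{r(J_T)>\mathcal{K}(T)\}$ by $\tfrac{4\alpha}{\beta}\max_{t\ge T/2}Y_t^{-1/2}$ and hence by \eqref{kk12}, and on the complement by Lemma~\ref{ne}(i); these are exactly the tools the paper uses for its middle event. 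You thereby avoid both the degree second-moment computation and Lemma~\ref{l3}, while both proofs ultimately rest on $\E(S^2)<\infty$ and on the same maximal-inequality machinery.
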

\begin{proof}
Let $D_T$ denote the degree of the randomly picked node $J_T$ at time $T$, 
i.e.\ the number of edges 
that are incident to the node picked uniformly at random at time $T$. 
Let $\rho_1<\ldots< \rho_{D_T}$ be the birth times of these edges. 
Condition on $J_T$, $D_T$, $\rho_1<\ldots< 
\rho_{D_T}$ and $(Y_t)_{0\leq t\leq T}$. Let $B_k$ be the event that $J_T$ creates an outgoing edge at time $\rho_k$ that is a multiple edge up 
to time $T$. Then the (conditional) probability of $B_k$ is smaller than or equal to
\begin{align*}
 \frac{D_T-1}{Y_{\rho_k}-1} .
\end{align*}

Note that $\bigcup_{k=1}^{D_T} B_k$ is the event that $J_T$ has at least one outgoing edge that is a multiple edge at time $T$. By subadditivity, we have
\begin{align*}
 &\P\biggl(\bigcup\limits_{k=1}^{D_T} B_k\biggm| J_T, D_T, \rho_1,\ldots,
\rho_{D_T},(Y_t)_{0\leq t\leq T}, Y_T>0 \biggr)
\\
&\leq \min\biggl( \sum\limits_{k=1}^{D_T} \P(B_k| J_T, D_T, \rho_1,\ldots,
\rho_{D_T},(Y_t)_{0\leq t\leq T}, Y_T>0 ),1 \biggr) 
\\
&\leq \min\biggl(\sum\limits_{k=1}^{D_T} \frac{D_T-1}{Y_{\rho_k}-1},1\biggr)
\\
&\leq \min\biggl(\frac{D_T^2}{\min\limits_{T-A_{J_T}(T)\leq t \leq T} Y_t-1},1\biggr).
\end{align*}
Taking the expectation, we obtain
{\allowdisplaybreaks[1]\begin{align}
 \P\biggl(\bigcup\limits_{k=1}^{D_T} B_k\biggm|  Y_T&>0 \biggr) \leq \E\biggl(\min\biggl(\frac{D_T^2}{\min\limits_{T-A_{J_T}(T)\leq t \leq T} Y_t-1},1\biggr)\biggm|  Y_T>0\biggr)\notag
 \\
&\leq \P(D_T\geq  e^{\frac{1}{12}(\lambda-\mu)T}|Y_T>0) + \P\biggl( 
\min\limits_{T-A_{J_T}(T)\leq t \leq T} Y_t-1 \leq  e^{\frac{1}{3}(\lambda-\mu)T}
\biggl|Y_T>0 \biggr) \notag
\\
 &\hphantom{\leq}+\E\biggl(\min\biggl(\frac{e^{\frac{1}{6}(\lambda-\mu)T}}{e^{\frac{1}{3}(\lambda-\mu)T}},1\biggr)\biggm|  Y_T>0\biggr) .\label{123}
\end{align}}

Writing $D_{\infty}$ for a random variable having the
asymptotic degree distribution $\mathrm{MixPo}(\M^{*})$ with $\M^*$ defined at the beginning of Subsection \ref{ssec:boundgen}, we obtain by conditioning on $\M^*$ that the second moment 
$\E(D_{\infty}^2)$ is equal to
$$\frac{2\alpha E(S)}{\lambda+\beta+\mu}+\frac{2\alpha 
\E((S+\E(S))^2)}{(\lambda+\beta+\mu)(\lambda+2(\beta+\mu))}\text{ (cf. 
Subsection 3.3 of \cite{b10})}.$$
Thus Theorem \ref{thm} and the Markov inequality imply 
\begin{align*}
 \P(D_T\geq  &e^{\frac{1}{12}(\lambda-\mu)T}|Y_T>0) 
\leq 
\E(D_\infty^2) e^{-\frac{1}{6}(\lambda-\mu)T}+ O(T^2 
e^{-\frac{1}{6}(\lambda-\mu)T})
\\&\ \ \ \ \ \ \leq 
\biggl(\frac{2\alpha E(S)}{\lambda+\beta+\mu}+\frac{2\alpha 
\E((S+\E(S))^2)}{(\lambda+\beta+\mu)(\lambda+2(\beta+\mu))}\biggr) e^{-\frac{1}{6}(\lambda-\mu)T}+ O(T^2 
e^{-\frac{1}{6}(\lambda-\mu)T})
\\
&\ \ \ \ \ \ =O(T^2 
e^{-\frac{1}{6}(\lambda-\mu)T}).
\end{align*}
For the second summand of the right-hand side of \eqref{123}, we obtain
\begin{align}
 \P\biggl( \min\limits_{T-A_{J_T}(T)\leq t \leq T} Y_t-1 \leq 
 e^{\frac{1}{3}(\lambda-\mu)T} \biggm|Y_T>0 \biggr) &\leq 
\P\biggl( 
\max\limits_{\frac T 2 \leq t \leq T} \frac 1 {Y_t} >   \frac{1}{e^{\frac{1}{3}(\lambda-\mu)T}+1} \biggm|Y_T>0 \biggr)\notag
\\
&\hphantom{\leq} + \P\biggl(A_{J_T}(T) > \frac{T}{2}\biggm| Y_T>0\biggr).\label{nl13}
\end{align}By Lemma \ref{ne}, the second summand of the right-hand side is of the order $O(e^{-\frac{1}{2}(\lambda-\mu)T})$ as $T\to\infty$.
With $Y_\infty=\lim\limits_{t\rightarrow\infty} Y_t$, we have{\allowdisplaybreaks[1]\begin{align*}
\P\biggl( 
\max\limits_{T/2 \leq t \leq T} \frac 1 {Y_t} >   \frac{1}{e^{\frac{1}{3}(\lambda-\mu)T}+1}, Y_\infty >0 \biggm|Y_T>&0 \biggr) = 
\frac{\P\biggl( 
\max\limits_{T/2 \leq t \leq T} \frac 1 {Y_t} >   \frac{1}{e^{\frac{1}{3}(\lambda-\mu)T}+1}, Y_\infty >0  \biggr)}{\P(Y_T>0)} 
\\
&\leq \frac{\P\biggl( 
\max\limits_{T/2 \leq t \leq T} \frac 1 {Y_t} >   \frac{1}{e^{\frac{1}{3}(\lambda-\mu)T}+1}, Y_\infty >0  \biggr)}{\P(Y_\infty>0)} 
\\
&\leq\P\biggl( 
\max\limits_{T/2 \leq t \leq T} \frac 1 {Y_t} >   \frac{1}{e^{\frac{1}{3}(\lambda-\mu)T}+1}\biggm|Y_\infty>0 \biggr).
\end{align*}}
Thus the first summand of the right-hand side of \eqref{nl13} is smaller than 
or equal to
\begin{align}
 &\P\biggl( \max\limits_{T/2 \leq t \leq T} \frac 1 {Y_t} >   \frac{1}{e^{\frac{1}{3}(\lambda-\mu)T}+1} \biggm|Y_\infty>0 \biggr)+ 
\P(Y_\infty=0|Y_T>0).\label{z2}
\end{align}
The second summand is smaller than or equal to $\frac{\mu}{\lambda} e^{-(\lambda-\mu)T}$ by Lemma \ref{austerben2}. By Corollary~\ref{supermartingale} and the inequality (2) in Theorem 6.14 on page 99 in \cite{yeh}, the first summand of \eqref{z2} 
is bounded from above by
 \begin{align*}
  \E\biggl(\frac{1}{Y_{\frac{T}{2}}}\biggm| Y_\infty >0 \biggr) 
( e^{\frac{1}{3}(\lambda-\mu)T}+1)
  &=O(T e^{-\frac{1}{2}(\lambda-\mu)T}) ( e^{\frac{1}{3}(\lambda-\mu)T}+1)= O( T
e^{-\frac{1}{6}(\lambda-\mu)T}),
 \end{align*}
 where the first equality follows from Lemma \ref{n3}.

 We may conclude that the probability that $J_T$ has at least one outgoing edge that is a multiple edge at time $T$ is of the order $O( T e^{-\frac{1}{6}(\lambda-\mu)T})$ 
\\

Now we consider incoming edges. By conditioning on $J_T$, $(S_i)_{i\in\en}$ and $(Y_t)_{0\leq t\leq T}$, we obtain that the probability for the event $\t B_i^{(1)}$ that node $i$ creates an edge that connects $i$ to $J_T$ at time $T$ is smaller than or equal to\fontsize{11}{11}
\begin{align*}
 &\E\biggl(1-\exp\biggl(-\alpha S_{i} A_{J_T} \frac{1}{\min\limits_{T-A_{J_T}(T)\leq t \leq T} Y_t-1}\biggr) \biggl|Y_T>0\biggr)\leq \E\biggl(\alpha S_{i} A_{J_T} \frac{1}{\min\limits_{T-A_{J_T}(T)\leq t \leq T} Y_t-1} \biggl|Y_T>0\biggr)\\
& \leq \P\biggl( 
\min\limits_{T-A_{J_T}(T)\leq t \leq T} Y_t-1 \leq  e^{\frac{1}{3}(\lambda-\mu)T}
\biggm|Y_T>0 \biggr) + \P\biggl(A_{J_T} > \frac{T}{2}\ \biggm| Y_T>0\biggr)+\alpha \E(S) \frac{T}{2}e^{-\frac{1}{3}(\lambda-\mu)T}.
\end{align*}\normalsize
Since we showed above that the first two summands are of the order $O( T e^{-\frac{1}{6}(\lambda-\mu)T})$, the right-hand side is of the order $O( T e^{-\frac{1}{3}(\lambda-\mu)T})$.

We condition on $\t B_i^{(1)}$ now and denote the birth time of the edge $(i,J_T)$ corresponding to $\t B_i^{(1)}$ by $\eta_i$. Then we have for the conditional probability of the event $\t B_i^{(2)}$ that $i$ creates another edge $(i,J_T)$ in the time interval $(\eta_i,T] \subset (T-A_{J_T}(T),T]$ that survives up to time~$T$
\begin{align*}
 &\P(\t B_i^{(2)}|    \t B_i^{(1)},Y_T>0) \leq \E\biggl(1-\exp\biggl(-\alpha S_{i} A_{J_T} \frac{1}{\min\limits_{T-A_{J_T}(T)\leq t \leq T} Y_t-1}\biggr) \biggl| \t B_i^{(1)},Y_T>0\biggr) \\
 &\leq \E\biggl(\alpha \t S A_{J_T} \frac{1}{\min\limits_{T-A_{J_T}(T)\leq t \leq T} Y_t-1} \biggl|  Y_T>0\biggr) \\
& \leq \P\biggl( 
\min\limits_{T-A_{J_T}(T)\leq t \leq T} Y_t-1 \leq  e^{\frac{1}{3}(\lambda-\mu)T}
\biggm|Y_T>0 \biggr) + \P\biggl(A_{J_T} > \frac{T}{2}\ \biggm| Y_T>0\biggr)
\\ &\hphantom{\leq}+\alpha \E(\t S) \frac{T}{2}e^{-\frac{1}{3}(\lambda-\mu)T} ,
\end{align*}
where $\t S$ denotes the social index of a node connected to $J_T$ at time $T$ by an incoming edge. By Corollary~\ref{size_biased2}, the right-hand side is of the order $O( T e^{-\frac{1}{3}(\lambda-\mu)T})$.

For simplicity, we denote the $Y_T$ nodes alive at time $T$ by $1,\ldots, Y_T$ now. For the probability that $J_T$ has at least two incoming edges from the same node at time $T$, we then obtain by subadditivity
\begin{align*}
 \P\biggl( \bigcup\limits_{\substack{{i=1}\\i \not= J_T}}^{Y_T} \t B_i^{(1)}\cap \t B_i^{(2)} \biggm|Y_T&>0\biggr) \leq \E\biggl(\min\biggl(\sum\limits_{i=1}^{Y_T-1} \P\bigl( \t B_i^{(1)}\cap \t B_i^{(2)}\bigm| (Y_t)_{0\leq t\leq T}\bigr),1\biggr) \biggm|Y_T>0\biggr) 
 \\
 & =\E\bigl(\min\bigl((Y_T-1) \P\bigl( \t B_1^{(1)}\cap \t B_1^{(2)}\bigm| (Y_t)_{0\leq t\leq T}\bigr),1\bigr) \bigm|Y_T>0\bigr)
 \\
 &\leq e^{\frac{7}{6} (\lambda-\mu)T} \P\bigl( \t B_1^{(1)}\cap \t B_1^{(2)}|Y_T>0\bigr)+ \P(Y_T-1> e^{\frac{7}{6} (\lambda-\mu)T}|Y_T>0)
\end{align*}
By the Markov inequality, the second summand of the right-hand side is of the order $O(e^{-\frac{1}{6} (\lambda-\mu)T})$. For the first summand, we have
\begin{align*}
 e^{\frac{7}{6} (\lambda-\mu)T} \P\bigl( \t B_1^{(1)}\cap \t B_1^{(2)}|Y_T>0\bigr) &= e^{\frac{7}{6} (\lambda-\mu)T} \P\bigl( \t B_1^{(1)} |Y_T>0\bigr) \P\bigl( \t B_1^{(2)} |\t B_1^{(1)},Y_T>0\bigr) 
 \\
 &= e^{\frac{7}{6}(\lambda-\mu)T} O( T e^{-\frac{1}{3}(\lambda-\mu)T}) O( T e^{-\frac{1}{3}(\lambda-\mu)T}) \hspace{-2pt}=\hspace{-1pt}O( T^2 e^{-\frac{1}{6}(\lambda-\mu)T}).
\end{align*}

Altogether, we obtain that the probability that $J_T$ has at least one multiple 
edge is of the order $O(T^2 e^{-\frac{1}{6}(\lambda-\mu)T})$.
\end{proof}

\subsubsection*{Proof of Corollary \ref{without_multiple}}
Recall that $\tilde \nu_t$ denotes the distribution of the number of 
neighbours, $\nu_t$ the degree distribution at time $t$ and $\nu$ the 
asymptotic degree distribution. Lemma \ref{negligible} implies that 
${d_{TV}(\tilde \nu_t,\nu_t)=O(t^{2} e^{-\frac{1}{6}(\lambda-\mu)t})} $ as $t \to \infty$. Furthermore, we know from Theorem 
\ref{superthm} that ${d_{TV}(\nu_t, \nu) = 
O\bigl(t^2 
e^{-\frac{1}{6}(\lambda-\mu) t}\bigr)}$ as $t \to \infty$. Thus the triangle 
inequality yields the desired result. \hspace*{\fill}
\qed
\vspace*{3mm}





\begin{thebibliography}{PRR13}


\bibitem[1]{ath}
Krishna~B Athreya and Peter~E Ney.
\newblock {\em Branching processes}.
\newblock Springer, Berlin, Heidelberg, New York, 1972.

\bibitem[2]{bara}
Albert-L\'aszl\'o Barab\'asi and R\'eka Albert.
\newblock Emergence of {Scaling} in {Random} {Networks}.
\newblock {\em Science}, 286(5439):509--512, 1999.

\bibitem[3]{bai}
Norman~TJ Bailey.
\newblock {\em The elements of stochastic processes with applications to the
  natural sciences}.
\newblock Wiley, New York, 1964.

\bibitem[4]{bb01}
Ginestra Bianconi, A-L Barab{\'a}si,.
\newblock {\em Competition and multiscaling in evolving networks}.
\newblock {\em Europhysics Letters}, 54(4), 2001.

\bibitem[5]{b10}
Tom Britton and Mathias Lindholm.
\newblock Dynamic random networks in dynamic populations.
\newblock {\em Journal of Statistical Physics}, 139(3):518--535, 2010.

\bibitem[6]{b11}
Tom Britton, Mathias Lindholm, and Tatyana Turova.
\newblock A dynamic network in a dynamic population: asymptotic properties.
\newblock {\em Journal of Applied Probability}, 48(4):1163--1178, 2011.


\bibitem[7]{cal02}\hspace{-2mm}
Guido Caldarelli, Andrea Capocci, Paolo De Los Rios, Miguel A. Munoz
\newblock Scale-free networks from varying vertex intrinsic fitness
\newblock {\em Physical review letters}, 89(25), 2002.

\bibitem[8]{cox}
David~R Cox and Hilton~D Miller.
\newblock {\em The theory of stochastic processes}, volume 134.
\newblock CRC Press, Boca Raton, Florida, 1977.

\bibitem[9]{exp}
Bennett~Eisenberg.
\newblock On the expectation of the maximum of IID geometric random variables.
\newblock {\em Statistics \& Probability Letters},
  78(2):135--143, 2008.


\bibitem[10]{har}
Theodore~Edward Harris.
\newblock Some mathematical models for branching processes.
\newblock Technical report, No. P-152, RAND Corporation, Santa Monica, 1950.



\bibitem[11]{age}
Fabian K\"uck and Dominic Schuhmacher.
\newblock On the age of a randomly picked individual in a linear birth and death process.
\newblock {\em Journal of Applied Probability}, 55(1), 82-93, 2018.

\bibitem[12]{lam}
Amaury Lambert.
\newblock Population dynamics and random genealogies.
\newblock {\em Stochastic Models}, 24(S1):45--163, 2008.

\bibitem[13]{neu}
Marcel~F Neuts and Sidney~I Resnick.
\newblock On the times of births in a linear birthprocess.
\newblock {\em Journal of the Australian Mathematical Society},
  12(04):473--475, 1971.

\bibitem[14]{pek}
Erol~A. Pek\"oz, Adrian R\"ollin, and Nathan Ross.
\newblock Total variation error bounds for geometric approximation.
\newblock {\em Bernoulli}, 19(2):610--632, 2013.

\bibitem[15]{smo13}
Igor~E Smolyarenko, Konrad Hoppe, and Geoff~J Rodgers.
\newblock Network growth model with intrinsic vertex fitness.
\newblock {\em Physical Review E}, 88(1), 2013.

\bibitem[16]{hof15}
Remco van~der Hofstad.
\newblock {\em Random Graphs and Complex Networks}, volume~43 of {\em Cambridge
  Series in Statistical and probabilistic Mathematics}.
\newblock Cambridge University Press, 2016.

\bibitem[17]{yan}
Nikos Yannaros.
\newblock Poisson approximation for random sums of {Bernoulli} random
  variables.
\newblock {\em Statistics \& Probability Letters}, 11(2):161--165, 1991.

\bibitem[18]{yeh} 
James Yeh.
\newblock {\em Martingales and stochastic analysis}, Vol.~1.
\newblock World Scientific, 1995.

\end{thebibliography}
\end{document}